\newcommand\al{\alpha}
\newcommand\bt{\beta}
\newcommand\G{\Gamma}
\newcommand\g{\gamma}
\newcommand\dt{\delta}
\newcommand\e{\varepsilon}
\newcommand\z{\zeta}
\renewcommand\th{\vartheta}
\renewcommand\k{\kappa}
\newcommand\Ld{\Lambda}
\newcommand\ld{\lambda}
\newcommand\x{\xi}
\newcommand\p{\pi}
\renewcommand\r{\rho}
\newcommand\s{\sigma}
\newcommand\ph{\varphi}
\newcommand\ch{\chi}
\newcommand\Om{\Omega}
\newcommand\Ccal{\mathcal{C}}
\newcommand\Ocal{\mathcal{O}}
\newcommand\Pcal{\mathcal{P}}
\newcommand\mm{\mathbf{m}}
\newcommand\pp{\mathbf{p}}
\newcommand\vv{\mathbf{v}}
\newcommand\xx{\mathbf{x}}
\newcommand\yy{\mathbf{y}}
\newcommand\Q{\mathbf{Q}}
\newcommand\R{\mathbf{R}}
\newcommand\CC{\mathbb{C}}
\newcommand\NN{\mathbb{N}}
\newcommand\QQ{\mathbb{Q}}
\newcommand\RR{\mathbb{R}}
\newcommand\ZZ{\mathbb{Z}}
\newcommand\re{\operatorname{Re}}
\newcommand\im{\operatorname{Im}}
\newcommand\sign{\operatorname{sign}}
\newcommand\isdef{\mathrel{:\mskip1mu=}}
\newcommand\oh{O}
\newcommand\vol{\mathrm{vol}}
\renewcommand\setminus{\smallsetminus}
\newcommand\lbs{{\ld_0}}
\newcommand\nubs{{\nu_0}}
\newcommand\pl{\mathrm{Pl}}
\newcommand\plf{\mathrm{pl}}
\newcommand\npl{\widetilde{\mathrm{Pl}}}
\newcommand\nplf{\widetilde{\mathrm{pl}}}
\newcommand\SL{\mathrm{SL}}
\newcommand\PSL{\mathrm{PSL}}
\newcommand\dist{\mathrm{dist\,}}
\newcommand\K{\mathrm{K}}
\newcommand\B{\mathrm{B}}
\newcommand\supp{\mathrm{Supp\,}}
\newcommand\ctt[1]{\mathrm{N}^{#1}}
\newcommand\ct{\ctt{r}} 
\newcommand\nctt[1]{\mathrm{\tilde N}^{#1}}
\newcommand\nct{\nctt{r}} 
\newcommand\nctr{\nct} 
\newcommand\ectr{\mathrm{Eis}^r} 
\newcommand\sg{\mathbf{s}}
\newcommand\prm{\mathfrak p}
\newcommand\kls[1]{S_{\!#1}}
\newcommand\divides{\mathrel{|}}
\newcommand\dividesnot{\mathrel{|\mskip-6mu/}}
\newcommand\Dtfct{\frac{2\sqrt{|D_F|}}{(2\pi)^d}}
\renewcommand\v{V}
\newcommand\nv{\tilde V}
\newtheorem{thm}{Theorem}[section]
\newtheorem{prop}[thm]{Proposition}
\newtheorem{cor}[thm]{Corollary}
\newtheorem{lem}[thm]{Lemma}
\theoremstyle{definition}
\newtheorem{defn}[thm]{Definition}
\newcommand\matc[4]{\left( {#1\@@atop #3}{#2\@@atop #4}\right)}
\newcommand\matr[4]{\left( {\hfill #1\@@atop\hfill #3}{\hfill
#2\@@atop\hfill #4}\right)}
\newcommand\hmatc[4]{\left[ {#1\@@atop #3}{#2\@@atop #4}\right]}
\newcommand\hmatr[4]{\left[ {\hfill #1\@@atop\hfill #3}{\hfill
#2\@@atop\hfill #4}\right]}
\newcommand\vect[2]{\left( {#1\@@atop #2} \right)}
\newcommand\vectc[2]{\left( {\hfill #1\@@atop\hfill #2} \right)}
\newcommand\summ[2]{ \mathchoice
{\mathop{{\sum}^{#1}}_{#2\hspace*{.2em}}} {\sum^{#1}_{#2}}
{\sum^{#1}_{#2}} {\sum^{#1}_{#2}} }
\newcommand\txtfrac[2]{{\textstyle\frac{#1}{#2}}}
\begin{document}
\title[Density results on Hilbert modular groups II]{Density results
for automorphic forms on Hilbert modular groups II}

\author{Roelof W.\,Bruggeman}
\address{Mathematisch Instituut Universiteit Utrecht, Postbus 80010,
NL-3508 TA Utrecht, Nederland} \email{r.w.bruggeman@uu.nl}

\author{Roberto J.\,Miatello}
\address{FaMAF-CIEM, Universidad Nacional de C\'or\-do\-ba,
C\'or\-do\-ba~5000, Argentina} \email{miatello@mate.uncor.edu}

\subjclass[2000]{11F30 11F41 11F72 22E30}

\begin{abstract}We obtain an asymptotic formula for a weighted sum
over cuspidal eigenvalues in a specific region, for $\SL_2$ over a
totally real number field~$F$, with discrete subgroup of Hecke type
$\G_0(I)$ for a non-zero ideal~$I$ in the ring of integers of~$F$.
The weights are products of Fourier coefficients. This implies in
particular the existence of infinitely many cuspidal automorphic
representations with multi-eigen\-values in various regions growing
to infinity. For instance, in the quadratic case, the regions include
floating boxes, floating balls, sectors, slanted strips (see
\S\ref{sss-rectquad}--\ref{sss-sectquad}) and products of prescribed
small intervals for all but one of the infinite places of~$F$. The
main tool in the derivation is a sum formula of Kuznetsov type
(\cite{BM6+}, Theorem~\ref{thm-sf}).
\end{abstract}

\maketitle

\setcounter{tocdepth}{1}

\tableofcontents

\section*{Introduction} Let $F$ be a totally real number field of
dimension $d$, and let $\Ocal_F$ be its ring of integers. If $I$ is a
non-zero ideal in $\Ocal_F$, let $\G=\G_0(I)$ denote the congruence
subgroup of Hecke type of the Hilbert modular group. We allow a
character of $\G_0(I)$ of the form $\matc abcd \mapsto \ch(d)$, with
$\ch$ a character modulo $I$.

The goal of the present paper is 
to obtain distribution results for cuspidal automorphic
representations of $G\cong \SL(2,\R)^d$ with eigenvalue parameters in
a subset $\Om_t$ of the multi-eigenvalue space, as
$t\rightarrow \infty$, under some general conditions on the family
$\Om_t$.
\smallskip

Let $V_\varpi$ be a cuspidal automorphic representation, with elements
transforming under the above character of $\G_0(I)$, and with a
compatible central character. The Fourier coefficients of automorphic
forms in $V_\varpi$ can be normalized so that they are independent of
the chosen automorphic form in $V_\varpi$. This results in
coefficients $c^r(\varpi)$ describing the Fourier expansion at the
cusp~$\infty$. The Fourier term order $r$ runs through the inverse
different $\Ocal'$ of $F$.

 We denote by $\ld_\varpi = (\ld_{\varpi,j})_j \in \RR^d$ the vector
of eigenvalues of the Casimir operators at the infinite (real)
places of~$F$. For compact sets $ \Om\subset \RR^d$, we consider the
counting functions
\begin{equation}\label{NrOmdef0}
\ct (\Om)\isdef \ct _{\x,\ch}\left( \Om\right)\isdef \sum_{\varpi,\,
  \ld_\varpi \in \Om} |{c^r(\varpi)}|^2. 
\end{equation}
The representations $\varpi$ run through an orthogonal system of
irreducible subspaces of $L^{2,\mathrm{cusp}}_\x(\G_0(I)\backslash
G,\ch)$, for a fixed choice of the character $\ch$ of $\G_0(I)$ and
of the central character (determined by $\x\in \{0,1\}^d$).

The main result in this paper asserts that if the family $t\mapsto
\Om_t$ satisfies certain mild conditions, then
\begin{equation}\label{as}
\ct(\Om_t) = \frac{2\sqrt{|D_F|}}{(2\pi)^d} \pl(\Om_t) + o\left(
\v_1(\Om_t) \right) \qquad(t\rightarrow\infty)
\end{equation}
for all non-zero $r\in \Ocal'$. By $D_F$ we denote the discriminant of
$F$ over~$\QQ$, and by $\pl$ the Plancherel measure of $G$. The error
term contains a reference measure $\v_1$ which, under some general
assumptions, is comparable to $\pl$.

Roughly speaking, we show that
the 
asymptotic formula \eqref{as} holds for the family $t\mapsto \Om_t$
under the conditions that $\Om_t$ grows in at least one coordinate
direction, and that the boundary $\partial \Om_t$ is small in
comparison with $\Om_t$ itself. On the other hand, it is often
convenient to use, instead of $\ld \in \RR^d$, the corresponding
spectral parameter
$\nu_\varpi\in \left( [0,\infty)\cup i\,(0,\infty) \right)^d$. We use
a tilde to indicate that the relevant measures like $\nct$, $\npl$
and $\nv_1$ are taken in the variable~$\nu$, and we write
$$\nct(\tilde\Om_t) = \sum_{\varpi,\nu_\varpi \in \tilde\Om_t} \left|
c^r(\varpi)\right|^2,$$ with $\nu_{\varpi,j} \in [0,\infty) \cup
i\,(0,\infty)$ such that $\ld_{\varpi,j} = \frac14-\nu_{\varpi,j}^2$.

In Theorems \ref{thm-asf1} and~ \ref{thm-asf2} we prove asymptotic
statements in terms of the quantities $\ct(\Om_t)$ and
$\nct(\tilde\Om_t)$ respectively, and this enables us to show
occurrence and density of representations for a wide class of
families of sets $t\mapsto \Om_t$. For illustration, we now list some
of the distribution results that are obtained in the quadratic case.

\begin{enumerate}
\item [(i)] \emph{Small rectangles}. Let $d=2$. Let
$[\al,\bt]\subset [1/4,\infty)$ and consider for $t$ large
$\Om_t = [\al,\bt] \times [t,t+\sqrt t].$ Formula \eqref{as} implies
\begin{equation}
\ct(\Om_t) \sim \frac{\sqrt{D_F}}{2\pi^2}\, \int_\al^\bt \tanh\pi
\sqrt{\ld-\txtfrac14}\, d\ld\;\; t^{1/2}\qquad(t\rightarrow\infty)\,.
\end{equation}
In particular, there are infinitely many $\varpi$ with $\ld_
{\varpi,1}\in [\al,\bt]$. These $\varpi$ have a second component of
unitary principal series type. A similar result holds with $\varpi_2$
of discrete series type.

On the other hand if $[\al,\bt] \subset \left[\lbs,\txtfrac14\right)$
and $\Om_t=[\al,\bt]\times [t,\sqrt t]$, then
\begin{align}
 \ct(\Om_t) &= o\bigl(t^{1/2}\bigr)\qquad(t\rightarrow\infty)\,
 \end{align}
 This gives an upper bound for the weighted density of $\varpi_1$ of
 complementary series type.

As another example, if one takes
$\Om_t = [t,t+\sqrt t]\times [ct,ct+\sqrt t]\,$, with $t$ large and
$c\geq 1$, then
\begin{equation}
\ct(\Om_t) \sim C_{c}\, t\qquad(t\rightarrow\infty).
\end{equation}
\item [(ii)] \emph{Slanting strips}. Let $d=2$, and put, in terms of
the spectral parameter
\[ \tilde \Om_t = \left\{ (\nu_1,\nu_2) \in \left(i[1,\infty)
\right)^2\;:\; t \leq |\nu_1|\leq 2t,\, a|\nu_1|+b \leq |\nu_2| \leq
a|\nu_1|+c\right\}\,, \] with $a>0$, $c>b$ fixed and $t$ large. Then
\[ \nct(\tilde \Om_t) \sim C_{a,b,c}\,
t^3\qquad(t\rightarrow\infty)\,. \]
This shows that we see infinitely many points $\nu_\varpi$ in a
slanted direction. We note that this slanting strip becomes a sector
in $\ld$-space.
\item [(iii)] \emph{Sectors}. Let $d=2$, and fix $0<p<q$,
$\al>\frac12$. For $t$ large put
\[ \Om_t = \biggl\{ (\ld_1,\ld_2)\in [0,\infty)^2\;:\; t \leq
\ld_1\leq t+t^\al\,, p\ld_1\leq \ld_2\leq q\ld_1\biggr\}\,.\] Then we
have
\[ \ct ( \Om_ t) \sim C_{p,q} \, t^{1+\al}
\qquad(t\rightarrow\infty)\,.\]
\item[(iv)] \emph{Spaces of holomorphic cusp forms. }Take
$\tilde\Om_b$ equal to the singleton $b=(b_1,\ldots,b_j)\in \ZZ^d$
with $b_j\geq 1$ and agreeing with the central character, {\sl ie.},
$b_j \equiv \x_j \bmod 2$ for $1\leq j \leq d$. These are the weights
for which there may be holomorphic cusp forms on the product of $d$
copies of the upper half-plane for the character $\ch$ of $\G_0(I)$.
Corollary~\ref{corhol} shows that for some positive constant~$C$ we
have
\begin{equation}\label{holas}
\nct(\tilde\Om_b) \;\sim\; C \; \prod_{j=1}^d
\frac{b_j-1}2\qquad\text{as } \prod_{j=1}^d
\frac{b_j-1}2\rightarrow\infty\,.
\end{equation}

If we take $r$ totally positive, then the quantity
$\nct(\tilde\Om_b) $ has an expression in terms of Fourier
coefficients of holomorphic cusp forms. In particular, this implies
that if there are only finitely many weights $b$ with $b_j\geq 2$ for
all $j$ for which the corresponding space of cusp forms is non-zero.
(We do not obtain information concerning weights $b$ for which $b_j=1$
for some~$j$.)
\end{enumerate}

We will give a much more complete list of applications of the main
asymptotic formula in \S\ref{sss-rectquad}--\ref{sss-sectquad}.
\smallskip

By using the Selberg trace formula (\cite{Se}, \cite{He}), some
unweighted distribution results related to those in this paper have
been obtained in \cite{Hu91} and \cite{HT92}, while results connected
with Weyl's laws in different contexts have been proved by several
authors e.g. \cite{DKV79}, \cite{Do}, \cite{Ef}, \cite{Mi},
\cite{LJ99}, \cite{LaMu}, \cite{Mu}, \cite{LV} and~\cite{LpMu}.

The main tool for the results in the present paper is the Kuznetsov
type sum formula in Theorem 3.21 of~\cite{BM6+}, which we recall in
\S\ref{sect-stat-sf}. It leads to sums weighted by a product of
Fourier coefficients. The results obtained here may be viewed as a
generalization of the results in ~\cite{BMP3a}. To explain the
difference, we note that the sum formula gives a linear relation
between four terms. The two main terms in the present context are a
weighted sum of a test function $\ph$ over the $\nu_\varpi$ and an
integral of $\ph$ against the Plancherel measure. The test function
has a product form $\ph(\nu) = \prod_j \ph_j(\nu_j)$ where $j$ runs
over the infinite places of the totally real number field~$F$. For
the terms that are principal in this paper this product form is
non-essential. However to show that the other terms are small we
need, for one of those terms
(the sum of Kloosterman sums), estimates of a Bessel transformation of
each of the factors~$\ph_j$. This forces the product structure upon
us, in contrast with the case of the Selberg trace formula. There the
integral transformation is the Fourier transform, which respects
rotations.

In \cite{BMP3a} we chose each factor $\ph_j$ as a Gaussian kernel. For
the places in a non-empty set $Q$ of real places, this kernel was an
approximation of the constant function one, for the other places it
was chosen as an approximation of a delta distribution. This led us
to asymptotic results for regions $\Om_X$ of the form
\[\prod_{j\not\in Q} [a_j,b_j] \times \prod_{j\in Q} \bigl\{
(\ld_j)_{j\in Q}\;:\; \sum_{j\in Q}|\ld_j| \leq X\bigr\}\,.\]

The purpose of this paper is to work with sets having a much more
general form in the coordinates in~$Q$. To do this, the test
functions have to be chosen in a much more complicated way. Our
choice is indicated in Lemma~\ref{lem-sf-q}. The idea is to take, for
each place in $Q$ a Gaussian kernel of moderate sharpness. We
approximate the characteristic function of sets in $\prod_{j\in Q}
\left(\RR \cup i\RR\right)$ by a convolution with this Gaussian
kernel. At the real places outside $Q$ we take a general test
function to be specified at a later stage. Application of the sum
formula gives the relation in Proposition~\ref{prop-est1q}. The use
of a Tauberian argument like in \cite{BMP3a} is no longer possible.
To be able to handle the error terms, we first give in
\S\ref{sect-ub} an upper bound for the weighted sums under
consideration. After that we adapt the sharpness of the test
functions to the family of sets we consider.

This leads to Theorem~\ref{thm-asf1}, where we loose control over the
size of the error term and have to be content with an asymptotic
result. This is so because the size of the error term depends on the
family of sets in a complicated way, as equations
\eqref{mr-bt-def}--\eqref{eps-choice} show, and the error term is
almost as large as the main term. The second stage of the method, in
\S\ref{sect-sst}, involves choosing the factors of the test function
at the real places outside $Q$ as approximations of characteristic
functions of intervals in the coordinate $\ld_j$. The central result
is Theorem~\ref{thm-asf2}. It is specialized in \S\ref{sect-special}
to various families of sets that include those mentioned above.
\smallskip

The sum formula involves products
$\overline{c^r(\varpi)}\allowbreak\,c^{r'}(\varpi)$ for two non-zero
Fourier terms orders $r$ and $r'$. Its application in the present
paper works well if $\frac {r'}r$ is totally positive. We intend to
apply the asymptotic results, under this assumption, in subsequent
work where we will take eigenvalues of Hecke operators into account.

\section{Preliminaries and discussion of main results} This section
serves to recall results and fix notations, and after that to state
the main results of this paper.

\subsection{Automorphic representations for Hilbert modular
groups}\label{sect-autrepr} Let $F$ be a totally real number field
with degree~$d$ over $\QQ$. The Lie group $G=\SL_2(\RR)^d$ is the
product $\prod_j \SL_2(k_j)$ over all infinite places $j$ of $F$. We
fix a non-zero ideal~$I$ in the ring of integers $\Ocal$ of $F$. The
group $G$ contains the discrete subgroup
\begin{equation}\label{Gam0}\G_0(I)= \biggl\{ \matc abcd \in
\SL_2(\Ocal)\;:\; c\in I \biggr\}
\end{equation}
with finite covolume.

Let $\ch$ be a character of $\left( \Ocal/I \right)^\ast$. It
determines a character of $\G_0(I)$ by $\ch\matc abcd = \ch(d)$. Let
$L^2(\G_0(I)\backslash G,\ch)$ be the Hilbert space of classes of
functions transforming according to $f(\g g) = \ch(\g)f(g)$ for
$\g\in \G_0(I)$. The group $G$ acts unitarily in this Hilbert space
by right translation. This space is split up according to central
characters, indicated by $\x\in \{0,1\}^d$. By
$L^2_\x(\G_0(I)\backslash G,\ch)$ we mean the subspace on which the
center acts by
\[ \left(\matr{\z_1}00{\z_1},\ldots,\matc{\z_d}00{\z_d} \right)
\mapsto \prod_j \z_j^{\x_j},\] where $\z_j\in \{1,-1\}$. This subspace
can be non-zero only if the following compatibility condition holds:
\begin{equation}\label{ch-x-comp}
\ch(-1) = \prod_j (-1)^{\x_j}.
\end{equation}
We assume this throughout this paper.

   There is an orthogonal decomposition
\begin{equation}\label{L2decomp}
L^2_\x(\G_0(I)\backslash G,\ch) =
L^{2,\mathrm{cont}}_\x(\G_0(I)\backslash G,\ch) \oplus
L^{2,\mathrm{discr}}_\x(\G_0(I)\backslash G,\ch).
\end{equation}
The $G$-invariant subspace
$L^{2,\mathrm{cont}}_\x(\G_0(I)\backslash G,\ch)$ can be described by
integrals of Eisenstein series and the orthogonal complement
$L^{2,\mathrm{discr}}_\x(\G_0(I)\backslash G,\ch)$ is a direct sum of
closed irreducible $G$-invariant subspaces. If $\ch=1$, the constant
functions form an invariant subspace. All other irreducible invariant
subspaces have infinite dimension. They are cuspidal and span the
space $L_\x^{2,\mathrm {cusp}}(\G_0(I)\backslash G,\ch)$, the
orthogonal complement of the constant functions in
$L^{2,\mathrm{discr}}_\x(\G_0(I)\backslash G,\ch)$.

We fix a maximal orthogonal system $\left\{ V_\varpi \right\}_\varpi$
of irreducible subspaces in the Hilbert space
$L^{2,\mathrm{cusp}}_\x(\G_0(I)\backslash G,\ch)$. This system is
 unique if all $\varpi$ are inequivalent. In general, there might be
multiplicities, due to oldforms.
\medskip

Each irreducible automorphic representation $\varpi$ of $G=
\prod_j \SL_2(\RR)$ is the tensor product $\bigotimes_j \varpi_j$ of
irreducible representations of $\SL_2(\RR)$. Here and in the sequel,
$j$ is supposed to run over the $d$ archimedean places of~$F$.

The factor $\varpi_j$ can (almost) be characterized by the eigenvalue
$\ld_{\varpi,j}$ of the Casimir operator of $\SL_2(\RR)$, and the
central character, which is indicated by $\x_j$.

If $\x_j=0$, then the eigenvalue $\ld_{\varpi,j}$ can either be of the
form $\frac b2-\frac{b^2}4$ with $b\geq 2$ even, or else the
$\ld_{\varpi,j}$ have a lower bound $\lbs \in
\left(0,\frac14\right]$.
By the Selberg conjecture, it is expected that one can take
$\lbs=\frac14$. The best result at present is
$\frac{77}{324}=\frac14-\left(\frac19\right)^2 \le \lbs \le \frac14$;
see \cite{KS1}. If $\x_j=1$, then the $\ld_{\varpi,j}$ can either lie
in $\left[ \frac14,\infty\right)$, or be of the form
$\frac b2-\frac{b^2}4$ with $b\geq 3$, $b$ odd. We call
$\ld_{\varpi} = \left( \ld_{\varpi,j} \right) \in \RR^d$ the
eigenvalue vector of the representation $\varpi$.

Spectral theory shows that the set $\{\ld_\varpi\}$ is discrete in
$\RR^d$. To see this we use that the Casimir operator of $G$ has a
discrete spectrum with finite multiplicities in
$L^{2,\mathrm{cusp}}_\x(\G\backslash G)_q$, where
$L^{2,\mathrm{cusp}}_\x(\G\backslash G)_q$ is the subspace of
$L^{2,\mathrm{cusp}}_\x(\G\backslash G)$ with $K$-type (or weight)
$q\in \ZZ^d$, $q\equiv\x\bmod 2$. Hence the number of representations
$\varpi$ (with multiplicities) such that $V_\varpi \cap
L^{2,\mathrm{cusp}}_\x(\G\backslash G)_q\neq\{0\}$ and for which the
eigenvalue $\ld_{\varpi,1}+\ld_{\varpi,2}+\cdots+\ld_{\varpi,d}$ of
the Casimir operator is under a given bound, is finite. For a given
component $\varpi_j$ in the discrete series, the weights $q_j$
occurring in $V_{\varpi,j}$ satisfy $|q_j| \geq b_j\geq 1$, with
$\ld_{\varpi,j} = \frac{b_j}2\bigl(1-\nobreak \frac{b_j}2\bigr)$. So
for a given bounded set $\Om\subset \RR^d$ we can choose the $K$-type
$q$ such that all $\varpi $ with $\ld_\varpi\in \Om$ are present in
$L^{2,\mathrm{cusp}}_\x(\G\backslash G)_q$. Thus $\Om$ contains only
finitely many $\ld_\varpi$, counted with multiplicities.

The correspondence between values of $\ld=\ld_{\varpi,j}$ and
equivalence classes of unitary representations of $\SL_2(\RR)$ of
infinite dimension is one-to-one if $\ld >0$ for $\x=\x_j=0$, and if
$\ld>\frac14$ if $\x=1$. In the other cases, $\ld=\frac
b2-\frac{b^2}4$ with $b\in \ZZ_{\geq 1}$, $b\equiv \x\bmod 2$. In
these cases, there are two equivalence classes, one with lowest
weight $b$ (holomorphic type), and one with highest weight $-b$
(antiholomorphic type). If $b\geq 2$, representations of these classes
occur discretely in $L^2\bigl(\SL_2(\RR)\bigr)$, and are called
\emph{discrete series representations}. The representations in the
case $b=1$ are sometimes called mock discrete series. They do not
occur discretely in $L^2\bigl( \SL_2(\RR) \bigr)$.

All these classes of representations, discrete series or not, may
occur as an irreducible component of
$L^{2,\mathrm{cusp}}_\x\left( \G_0(I)\backslash
G,\ch\right)$.\medskip

 As discussed in \S2.3.4 in \cite{BM6+}, the Fourier expansion of one
automorphic form in $V_\varpi$ determines the Fourier expansion of
any automorphic form in $V_\varpi$. We refer to \cite{BM6+} for the
normalization. This results in coefficients $c^r(\varpi)$ describing
the Fourier expansion at the cusp~$\infty$. The Fourier term order
$r$ runs through the inverse different $\Ocal'$.

In the choice of the $c^r(\varpi)$ there is a freedom of a complex
factor with absolute value one for a given $\varpi$. Since we shall
work with weights $|{c^r(\varpi)}|^2$ this freedom has no influence
on the results we aim at.
\medskip

When dealing with the sum formula, it is technically easier to
parametrize the eigenvalues $\ld_{\varpi,j} \in \RR$ by
$\ld_{\varpi,j}=\frac14-\nu_{\varpi,j}^2$, with, for instance,
$\nu_{\varpi,j}\in (0,\infty) \cup i\,[0,\infty)$. We put $\nu_\varpi
= \bigl( \nu_{\varpi,j}\bigr)$, and call $\nu_\varpi$ and $\x_\varpi=
\bigl( \x_{\varpi,j} \bigr)\in \{0,1\}^d$ the \emph{spectral
parameters} of~$\varpi$.

We have $\nu_\varpi \in Y_\x = \prod_j Y_{\x_j}$, with
\begin{alignat}2 \label{Ydef}
   Y_0&\isdef \left\{ \txtfrac{b-1}2\; :\; b\geq 2\text{ even
   }\,\right\}\;&\cup
   &\;i\,[0,\infty) \cup \left( 0,\nubs \right] ,
\displaybreak[0]
\\
\nonumber Y_1&\isdef \left\{ \txtfrac{b-1}2\;:\; b\geq 3\text{ odd
}\,\right\} &\cup& \;i\,[0,\infty),
\end{alignat}
where $\nubs=\sqrt{\frac14-\lbs}$.

\subsection{Discussion of main results}\label{sect-results}

\subsubsection{Counting function} For compact sets $ \Om\subset
\RR^d$, we use the counting functions
\begin{equation}\label{NrOmdef}
\ct (\Om)\isdef \ct _{\x,\ch}\left( \Om\right)\isdef \sum_{\varpi,\,
\ld_\varpi \in \Om} |{c^r(\varpi)}|^2,
\end{equation}
with $r\in \Ocal'\setminus\{0\}$. The representations $\varpi$ run
through the orthogonal system of irreducible subspaces of
$L^{2,\mathrm{cusp}}_\x(\G_0(I)\backslash G,\ch)$ chosen in
\S\ref{sect-autrepr}. If there are multiplicities larger than one,
the choice of the orthogonal system does not influence the counting.
It may happen that $c^r(\varpi)=0$ for some $\varpi$ and some~$r$.
However, varying $r$ we can detect all~$\varpi$.

More generally, if $f$ is a function on $\RR^d$, we can consider the
sum
\begin{equation}\label{Nrfdef}
\ct(f)\isdef \ct_{\x,\ch}(f) \isdef \sum_\varpi f(\ld_\varpi)
|{c^r(\varpi)}|^2,
\end{equation}
which converges for suitable~$f$, for instance, compactly
supported~$f$. So $\ct(\Om) = \ct(\ch_\Om)$, where $\ch_\Om$ is the
characteristic function of $\Om$.

In the $\nu$-coordinate the counting function is
\begin{equation}\label{tNrfdef}
\nct (\tilde\Om) = \ \sum_{\varpi,\, \nu_\varpi\in \tilde \Om}
|{c^r(\varpi)}|^2,
\end{equation}
for sets $\tilde\Om \subset Y_\x$.

\subsubsection{Plancherel measure}We will compare $\ct(\Om)$ with
$\pl(\Om)=\pl_\x(\Om)$. The measure $\pl$ on $\RR^d$ is the product
$\pl=\bigotimes_j \pl_{\x_j}$, where $\pl_{\x_j}$ is the measure
on~$\RR$ given by
\begin{align}
\pl_0(f) &= \int_{1/4}^\infty f(\ld)
\tanh\pi\sqrt{\ld-\txtfrac14}\, d\ld
\displaybreak[0]\\
\nonumber &\qquad\hbox{} + \sum_{b\geq 2,\, b\equiv 0 \bmod2}
(b-1) f\left( \txtfrac b2\left(1-\txtfrac b2\right) \right),
\displaybreak[0]
\\
\nonumber \pl_1(f)&= \int_{1/4}^\infty f(\ld)
\coth\pi\sqrt{\ld-\txtfrac14}\, d\ld
\displaybreak[0]\\
\nonumber &\qquad\hbox{} + \sum_{b\geq 3,\, b\equiv 1 \bmod2}
(b-1) f\left( \txtfrac b2\left(1-\txtfrac b2\right) \right)
\end{align}
Note that $\pl_{\x_j}$ gives zero measure to the set of
\emph{exceptional eigenvalues} in $\left[ \lbs,\frac14\right)$.

The notations $\pl$ refers to the Plancherel measure of $\SL_2(\RR)$,
see, e.g., \cite{La75}, Chap.~VIII, \S4, p.~174.

In the $\nu$-coordinate the Plancherel measure on $Y_\x$ is given by
$\npl_\x = \bigotimes_j \npl_{\x_j}$, where
\begin{equation}\label{npl-def}
\npl_{\x_j} (f) = 2\int_0^\infty f(it)\, \nplf_j(t) \, dt + 2\sum_{\bt
\in \frac{\x_j+1}2+\NN_0} f(\bt)\, \nplf_j(\bt)\,,
\end{equation}
with
\begin{equation}\label{nplf-def}
\renewcommand\arraystretch{1.3}
\begin{array}{|c|c|c|}
\hline &&\nplf_j(t) \\ \hline \x_j=0 & t\in i\RR& |t|\, \tanh\p |t|
\\ \hline
\x_j=1 & t\in i\RR & |t|\coth \p |t|
\\
 \hline
t\equiv\frac{\x_j-1}2 \bmod 1&t\in \RR\setminus\{0\}&|t|\\ \hline
t\not\equiv\frac{\x_j-1}2\bmod 1&t\in \RR\setminus\{0\}&0\\ \hline
\end{array}
\end{equation}
If $f$ is even, then
\begin{align}
\npl_0(f) &= i \int_{\re\nu=0} f(\nu) \nu \tan\pi\nu \, d\nu+
\sum_{\bt\in \frac12+\ZZ} |\bt|f(\bt),\\
\nonumber \npl_1(f)&= -i \int_{\re\nu=0} f(\nu)\nu\cot\pi\nu\,d\nu
+ \sum_{\bt\in \ZZ\setminus \{0\}} |\bt|f(\bt).
\end{align}
\medskip

The reference measure $\v_1$ is more easily given in the
$\nu$-coordinate. We leave the reformulation in terms of the
$\ld$-coordinate to the reader. The measure has a product form $\nv_1
= \bigotimes_j \nv_{1,\x_j}$ on the space $\left(
(0,\infty) \cup i\,[0,\infty) \right)^d$ with

\begin{align}\label{tV1def}
\int h\, d\nv_{1,0} &= \int_1^\infty t\, h(it)\, dt + \int_0^1 h(it)\,
dt + \int_0^{\nubs} h(x)\, dx + \sum_{\bt>0,\, \bt \equiv
\frac12\,(1)} \bt\, h(\bt)\,,\\
\nonumber \int h\, d\nv_{1,1} &= \int_1^\infty t\, h(it)\, dt +
\int_0^1 h(it)\, dt + \sum_{\bt>0,\, \bt \equiv 0\,(1)} \bt
\,h(\bt)\,.
\end{align}
So $\nv_1$ is positive everywhere on $Y_\x$, and $\npl(\tilde \Om)
\ll \nv_1(\tilde\Om)$ for all $\Om$. We have also $\nv_1(\tilde\Om)
\ll_\e \npl(\tilde\Om)$ if the coordinates of $\tilde\Om_t$ stay away
from $(0,\nubs]\cup i\, [0,\e)$ for some $\e>0$.

\subsubsection{Asymptotic formula}We will prove that for families
$t\mapsto \Om_t$ of sets in $\RR^d$ satisfying conditions discussed
below:
\begin{equation}\label{asf}
\ct(\Om_t) = \Dtfct \pl(\Om_t) +o\bigl( \v_1(\Om_t) \bigr) \quad
(t\rightarrow\infty)\,
\end{equation}
for any $r\in \Ocal'\setminus\{0\}$.

Here $D_F$ is the discriminant over $\QQ$ of the totally real number
field $F$ of degree~$d$. The character $\ch$ of $(\Ocal/I)^\ast$ and
$\x\in \{0,1\}^d$ are as explained in \S\ref{sect-autrepr}.

The sets $\Om_t$ should get large, in particular, $\v_1(\Om_t)$ should
tend to infinity as $t\rightarrow\infty$. Moreover, the boundary
$\partial\Om_t$ should be relatively small in comparison with $\Om_t$
itself. The precise conditions are discussed in \S\ref{sect-U-eps},
Theorem~\ref{thm-asf1}, Proposition~\ref{prop-asf1-u}, and
Theorem~\ref{thm-asf2}.

Since $\pl(\Om_t) \ll \v_1(\Om_t)$ the term $o\bigl(\v_1(\Om_t)
\bigr)$ 
need not be small in comparison with $\pl(\Om_t)$. If $\v_1(\Om_t)
\ll \pl(\Om_t)$ holds as well, then the asymptotic formula simplifies
to
\begin{equation}\label{asfa}
\ct(\Om_t) \sim \Dtfct \pl(\Om_t) \quad (t\rightarrow\infty)\,.
\end{equation}

In this section we shall be content to discuss a number of families
for which the asymptotic formula \eqref{asf} holds, showing the
existence of automorphic forms with eigenvalue (or spectral)
parameters lying in such regions $\Om_t$, as $t$ gets large.

\subsubsection{Small rectangle in the real quadratic
case}\label{sss-rectquad} Before stating more general results, we
consider the case that $d=2$, and apply some of the more general
statements first for this situation.

Let $[\al,\bt]\subset [1/4,\infty)$ and consider for $t\geq \frac 54$:
\begin{equation}
\Om_t = [\al,\bt] \times [t,t+\sqrt t]\,.
\end{equation}
Theorem~\ref{thm-B-H-p} implies
\begin{equation}
\ct(\Om_t) \sim \frac{\sqrt{D_F}}{2\pi^2}\, \int_\al^\bt \tanh\pi
\sqrt{\ld-\txtfrac14}\, d\ld\;\; t^{1/2}\qquad(t\rightarrow\infty)\,.
\end{equation}
In particular, there are infinitely many $\varpi$ with $\ld_
{\varpi,1}\in [\al,\bt]$. These $\varpi$ have a second component of
unitary principal series type. A similar result holds with $\varpi_2$
of discrete series type:
\begin{align}
\Om_b &= [\al,\bt] \times \left\{ \txtfrac{b-1}2\right\}
\qquad\text { with }b>1\,,\; b \cong \x_2\bmod 2\,,\\
\nonumber \ct(\Om_b) &\sim \frac{\sqrt{D_F}}{2\pi^2} \, \int_\al^\bt
\tanh\pi \sqrt{\ld-\txtfrac14}\, d\ld\;\cdot\; b
\qquad(b\rightarrow\infty)\,.
\end{align}

On the other hand if $[\al,\bt] \subset \left[\lbs,\txtfrac14\right)$
then
\begin{align}
\text{for } \Om_t&=[\al,\bt]\times [t,\sqrt t] \qquad\text{with }t\geq
\txtfrac 54\,,\\
\nonumber
\ct(\Om_t) &= o\bigl(t^{1/2}\bigr)\qquad(t\rightarrow\infty)\,;\\
\text{and for } \Om_b &= [\al,\bt] \times \left\{
\txtfrac{b-1}2\right\} \qquad\text{ with }b>1\,,\; b \cong \x_2\bmod
2\,,\\
\nonumber \ct(\Om_b) &= o\bigl(b\bigr)\qquad(b\rightarrow\infty)\,.
\end{align}
This does not exclude the presence of $\varpi_1$ of complementary
series type, but gives an upper bound for their weighted density.

These results also hold with the role of $\varpi_1$ and $\varpi_2$
interchanged.
\medskip

One may also consider families of rectangles for which both factors
vary:
\begin{equation}
\Om_t = [t,t+\sqrt t]\times [ct,ct+\sqrt t]\,,
\end{equation}
with $t\geq \frac54$ and $c\geq 1$. Then
\begin{equation}
\ct(\Om_t) \sim \frac{\sqrt{D_F}}{2\pi^2} c^{1/2}
t\qquad(t\rightarrow\infty)\,.
\end{equation}

\subsubsection{Floating boxes} We consider for general $F$ a small
hypercube of fixed size floating to infinity in the region $\left(
i[1,\infty)\right)^d$ of the $\nu$-plane.
\begin{prop}\label{prop-hypercube}Let $\tilde\Om_t = \prod_j
i[a_j(t),a_j(t)+\s]$ with $\s>0$, 
$a_j(t)\geq 1$ for all $j$ and~$t$, and where
$\lim_{t\rightarrow\infty}a_j(t)=\infty$ for at least one $j$. Then
\[ \nct(\tilde\Om_t) \sim \Dtfct \npl(\tilde\Om_t)
\qquad(t\rightarrow\infty)\,. \]
\end{prop}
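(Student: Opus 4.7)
The plan is to apply Theorem~\ref{thm-asf2} directly to the family $\tilde\Om_t$ and then convert the error term from $o(\nv_1(\tilde\Om_t))$ into $o(\npl(\tilde\Om_t))$ using the two measures' comparability on the unitary principal-series region. Each $\tilde\Om_t=\prod_j i[a_j(t),a_j(t)+\s]$ is a Cartesian product of intervals lying inside $i[1,\infty)$, so it fits naturally into the product framework of Theorem~\ref{thm-asf2}; in particular it avoids both the discrete series points on the positive real axis and the exceptional region $(0,\nubs]$, so only the continuous parts of $\npl$ and $\nv_1$ are involved.

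First I would verify the hypotheses of Theorem~\ref{thm-asf2}. From \eqref{tV1def}, since $a_j(t)\geq 1$, each factor sits in the range where $d\nv_{1,\x_j}=t\,dt$, giving
\[ \nv_1(\tilde\Om_t) \;=\; \prod_j \int_{a_j(t)}^{a_j(t)+\s} t\, dt \;=\; \prod_j \s\bigl(a_j(t)+\tfrac{\s}{2}\bigr). \]
Since $a_{j_0}(t)\to\infty$ for some $j_0$ while $a_j(t)\geq 1$ elsewhere, $\nv_1(\tilde\Om_t)\geq \s^d\, a_{j_0}(t)\to\infty$, securing the growth hypothesis. For the smallness-of-boundary condition, thickening or shrinking each factor interval by a small $\e>0$ multiplies the corresponding integral by $1+O(\e/\s)$, so the symmetric difference between an $\e$-enlargement and an $\e$-contraction of $\tilde\Om_t$ has $\nv_1$-measure $O_d(\e/\s)\cdot\nv_1(\tilde\Om_t)$; letting $\e\to 0$ at a suitable rate makes this $o(\nv_1(\tilde\Om_t))$.

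The last step uses the remark immediately following \eqref{tV1def}: whenever the coordinates of $\tilde\Om$ stay bounded away from $(0,\nubs]\cup i[0,\e)$ for some $\e>0$, one has $\nv_1(\tilde\Om)\ll_\e \npl(\tilde\Om)$. Since $a_j(t)\geq 1$, our box $\tilde\Om_t$ satisfies this uniformly with $\e=1$ (concretely, the densities in \eqref{npl-def}--\eqref{nplf-def} versus \eqref{tV1def} differ on each factor by $2\tanh\pi t$ or $2\coth\pi t$, both confined to $[2\tanh\pi,2\coth\pi]$ for $t\geq 1$). Hence $\nv_1(\tilde\Om_t)\asymp \npl(\tilde\Om_t)$ with constants depending only on $d$, and Theorem~\ref{thm-asf2} yields
\[ \nct(\tilde\Om_t) \;=\; \Dtfct\,\npl(\tilde\Om_t) + o\bigl(\nv_1(\tilde\Om_t)\bigr) \;=\; \Dtfct\,\npl(\tilde\Om_t)\bigl(1+o(1)\bigr), \]
which is the claim.

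The main obstacle I foresee is precisely this last comparison step: the error term of Theorem~\ref{thm-asf2} is phrased in terms of $\nv_1$, not $\npl$, and without a matching lower bound $\npl\gg\nv_1$ on the region under consideration, one would only recover an upper bound rather than an asymptotic equivalence. The hypothesis $a_j(t)\geq 1$, keeping $|\nu_j|$ bounded away from $0$, is exactly what makes $\tanh\pi t$ and $\coth\pi t$ bounded below and so renders this comparison automatic.
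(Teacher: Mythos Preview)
Your strategy---apply Theorem~\ref{thm-asf2} with $E=\emptyset$, $Q_-=\emptyset$, $Q_+$ all places---is exactly the route the paper takes (packaged there as Proposition~\ref{box-restr}), and the final comparison $\nv_1\asymp\npl$ on $i[1,\infty)$ is fine. But you have misidentified the hypotheses of Theorem~\ref{thm-asf2}. There is no freestanding ``growth hypothesis'' $\nv_1(\tilde\Om_t)\to\infty$; the actual conditions are \eqref{asf-cond1}, which with $Q_-=\emptyset$ read
\[
\nv_\r(C_t^+)=o\bigl(\nv_1(C_t^+)\bigr)
\qquad\text{and}\qquad
\nv_1\bigl(C_t^+[2\e(C_t)]\bigr)=o\bigl(\nv_1(C_t^+)\bigr).
\]
You do not verify the first of these at all. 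It is the statement that $m_\r(C_t)\to 0$; for the hypercube one computes $m_\r(C_t)\asymp\prod_j a_j(t)^{\r-1}$ (this is \eqref{mr-eps-box} in the paper), which tends to $0$ because $\r<1$, all $a_j(t)\ge 1$, and one $a_j(t)\to\infty$. This is not the same check as $\nv_1\to\infty$, even though for this particular family the two happen to be equivalent.

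For the second condition you write ``letting $\e\to 0$ at a suitable rate'', but $\e$ is not yours to choose: it is the specific quantity $\e(C_t)$ in \eqref{eps-choice}, determined by $m_\r(C_t)$. What you must check is that this particular $\e(C_t)$ tends to $0$---which it does, since $m_\r(C_t)\to 0$---and then your $O(\e/\s)$ boundary estimate finishes the job. (Incidentally, the conclusion of Theorem~\ref{thm-asf2} already carries the error as $o(\pl_Q(\hat C_t))$, so your last paragraph, while correct, is not needed.)
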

Note that $\npl(\tilde\Om_t) \rightarrow\infty$. We have
$\npl(\tilde\Om_t)\sim \s^d \prod_j a_j(t)$ if
$a_j(t)\rightarrow\infty$ for all~$j$. Proposition~\ref{box-restr}
implies that the size $\s$ may even slowly decrease with~$t$,
provided
\[ \s(t) \geq \g \biggl( \sum_j \log a_j(t) \biggr)^{-\al}\quad\text{
for any } \al\in (0,\frac12),\, \g>0\,. \]

We conclude that there are spectral parameters $\nu_\varpi$ in such
hypercube if they are sufficiently far away from the origin. If we
reformulate in terms of $\ld$ we get boxes $\Om_t$ in $\ld$-space
with increasing size.

\subsubsection{Remark} On the other hand, for hypercubes in
$\ld$-space with fixed size our method does not give an asymptotic
formula. In fact, the $\ld_\varpi$ may leave space for a small
hypercube moving around in $\ld$-space avoiding all of
the~$\ld_\varpi$. This can occur if the $\bigl|c^r(\varpi)\bigr|^2$
are not often very small.

\subsubsection{Discrete series} For $\varpi$ for which all factors are
discrete series representations, we do not need boxes, but can work
with single points.
\begin{prop}\label{prop-holo}Let $\pp \in \prod_j \left(
\frac{\x_j+1}2+\NN_0\right)$ and let $\Om = \Om_\pp= \{\pp\}$. Then,
if at least one coordinate of~$\pp$ tends to infinity, we have
\[ \nct\left( \{\pp\} \right) \sim \frac{2\sqrt{|D_F|}}{\pi^d} \prod_j
\pp_j\,.\]
\end{prop}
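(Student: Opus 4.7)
The plan is to deduce the proposition from the main asymptotic formula in its $\nu$-coordinate version, Theorem~\ref{thm-asf2}, applied to the constant family $\Om\equiv\{\pp\}$ in which at least one coordinate $\pp_j$ tends to infinity. Because $\{\pp\}$ sits entirely in the discrete (holomorphic-type) part of $Y_\x$, no smoothing of a characteristic function is required at any place and there is no boundary to control.

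I would begin by computing the Plancherel and reference measures of the singleton. Each $\pp_j\in \frac{\x_j+1}{2}+\NN_0$ satisfies $\pp_j\equiv\frac{\x_j-1}{2}\bmod 1$, so the table in \eqref{nplf-def} yields $\nplf_j(\pp_j)=\pp_j$, and substitution into \eqref{npl-def} gives
\[ \npl(\{\pp\}) \;=\; \prod_j 2\,\nplf_j(\pp_j) \;=\; 2^d \prod_j \pp_j. \]
Similarly, from the discrete-series sum in \eqref{tV1def}, $\nv_{1,\x_j}(\{\pp_j\}) = \pp_j$, so $\nv_1(\{\pp\}) = \prod_j \pp_j$. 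The hypothesis that some $\pp_j\rightarrow\infty$ makes both quantities tend to infinity.

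Next I would verify the hypotheses of Theorem~\ref{thm-asf2}. The crucial simplification is that the singleton $\{\pp\}$ is both open and closed in $Y_\x$: each factor $\pp_j$ is an isolated point of $Y_{\x_j}$, being separated from the continuous parts $i[0,\infty)$ and, for $\x_j=0$, from $(0,\nubs]$. Thus the boundary of $\{\pp\}$ is empty and the smallness-of-boundary condition required by the theorem is trivially satisfied. Applying the theorem gives
\[ \nct(\{\pp\}) \;=\; \Dtfct\,\npl(\{\pp\}) + o\bigl(\nv_1(\{\pp\})\bigr) \;=\; \frac{2\sqrt{|D_F|}}{\pi^d}\prod_j\pp_j + o\Bigl(\prod_j\pp_j\Bigr), \]
which is the asymptotic claimed.

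The main obstacle is entirely upstream, inside Theorem~\ref{thm-asf2}, where the sum-formula analysis and the Kloosterman-sum term must be handled. The passage from that theorem to the present proposition is a bookkeeping exercise; the only non-obvious step is to track the factor of $2^d$ between $\npl(\{\pp\})$ and $\nv_1(\{\pp\})$, which produces the clean constant $\frac{2\sqrt{|D_F|}}{\pi^d}$ in the final formula.
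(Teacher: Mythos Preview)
Your approach is correct and matches the paper's: Proposition~\ref{prop-holo} is obtained there as an immediate consequence of Proposition~\ref{box-restr} (hence of Theorem~\ref{thm-asf2}) with $E=Q_+=\emptyset$, and the equivalent Corollary~\ref{corhol} is proved directly from Proposition~\ref{prop-st1-prod} in exactly the same way. One small correction: with $Q_+=\emptyset$ the boundary condition in~\eqref{asf-cond1} is vacuous, so the hypothesis you actually need to check is the first line, namely $\nv_\r(C_t^+)\,\nv_{-A}(C_t^-)=\prod_j\pp_j^{-A}=o\bigl(\prod_j\pp_j\bigr)=o\bigl(\nv_1(C_t)\bigr)$, which holds since some $\pp_j\to\infty$.
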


\subsubsection{Combinations} A combination is possible. At some places
we take a fixed interval $[A_j,B_j]$ on the $\ld$-line. We have to
require that the endpoints are not equal to a discrete series
eigenvalue:
\begin{equation}\label{bd-cond}
A_j, B_j \not\in \left\{ \txtfrac b2\left( 1-\txtfrac b2\right)\;:\;
b>1,\, b\equiv \x_j\bmod 2\right\}\,.
\end{equation}
\begin{thm}\label{thm-B-H-p}Let $E\sqcup Q_+ \sqcup Q_-$ be a
partition of the infinite places of~$F$. Suppose that $B_E =
\prod_{j\in E} [A_j,B_j]$ satisfies condition~\eqref{bd-cond}. Let
\[ C_t = \prod_{j\in Q_+} i\left[ a_j(t),a_j(t)+\s\right] \]
with $a_j(t)\geq 1$ for any $j\in Q_+$ and all~$t$, with $\dt>0$
fixed. Let $\pp \in \prod_{j\in Q_-} \left(
\frac{\x_j+1}2+\NN_0\right)$. Then, provided that
$a_j(t)\rightarrow\infty$ for at least one $j\in Q_+$ or
$\pp_j\rightarrow\infty$ one $j\in Q_-$, we have
\begin{align}
\nct \left( \tilde B_E \times C_t \times \{\pp\} \right) &= \Dtfct
\npl \left( \tilde B_E \times C_t \times \{\pp\} \right)\\
&\qquad\hbox{}
 + o\left( \nv_1(\tilde B_E \times C_t \times \{\pp\} )\right) \,.
\end{align}
\end{thm}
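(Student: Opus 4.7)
The plan is to deduce Theorem~\ref{thm-B-H-p} as a direct application of the general asymptotic formula Theorem~\ref{thm-asf2} to the product family $\tilde\Om_t \isdef \tilde B_E \times C_t \times \{\pp\}$. The set $Q \isdef Q_+ \sqcup Q_-$ should play the role of the distinguished set of places at which the test function in the sum formula (via Lemma~\ref{lem-sf-q}) is chosen as a Gaussian approximation --- a continuous Gaussian in the $\nu$-variable for $j\in Q_+$, and a Gaussian concentrated on the discrete series point $\pp_j$ for $j\in Q_-$. At places $j\in E$ one takes the factor of the test function to be an approximation of $\mathbf{1}_{[A_j,B_j]}$; here condition~\eqref{bd-cond} is precisely what is needed so that no atom of $\npl_{\x_j}$ sits on the boundary $\{A_j,B_j\}$ and the sharpened limit recovers $\pl_{\x_j}([A_j,B_j])$ with no spurious boundary correction.

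Next I would verify the hypotheses of Theorem~\ref{thm-asf2}. By the product structure, $\nv_1(\tilde\Om_t)=\nv_1(\tilde B_E)\,\nv_1(C_t)\,\nv_1(\{\pp\})$; the first factor is a positive constant, the second satisfies $\nv_1(C_t)\asymp \s^{|Q_+|}\prod_{j\in Q_+}a_j(t)$, and the third is comparable to $\prod_{j\in Q_-}\pp_j$ by~\eqref{tV1def}. The hypothesis that some $a_j(t)$ or some $\pp_j$ grows unboundedly thus forces $\nv_1(\tilde\Om_t)\to\infty$. For the boundary-smallness condition, $\partial\tilde\Om_t$ decomposes via the product structure into unions of lower-dimensional slabs: in the $E$-coordinates the boundary points $\{A_j,B_j\}$ have $\nv_{1,\x_j}$-measure zero by~\eqref{bd-cond}, in the $Q_+$-coordinates the endpoints $\{ia_j(t),\,i(a_j(t)+\s)\}$ have measure zero, and in the $Q_-$-coordinates the singleton $\{\pp_j\}$ is itself closed. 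Thus an $\e$-tubular neighbourhood of $\partial\tilde\Om_t$ has $\nv_1$-measure dominated by $O_\s(\e)\,\nv_1(\tilde\Om_t)$, which is $o(\nv_1(\tilde\Om_t))$ as $\e\to 0$, as required.

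The principal obstacle is coordinating the three kinds of factors of the test function simultaneously. At the fixed places in $E$ the test function is a bump of order-one size which contributes only a bounded multiplicative constant to the weighted sum; at the places in $Q_+$ the Gaussian must be sharp enough to resolve a window of width $\s$ but mild enough that the sum of Kloosterman terms --- controlled via the Bessel-transform estimates of~\S\ref{sect-ub} --- remains $o(\nv_1(\tilde\Om_t))$; at the places in $Q_-$ the test function must isolate the single weight~$\pp_j$ without interference from neighbouring discrete series weights or the continuous spectrum. Choosing the sharpness parameters as dictated by Theorem~\ref{thm-asf2} (with the tensor-product structure imposed by Lemma~\ref{lem-sf-q}), the spectral side of the sum formula Theorem~\ref{thm-sf} contributes the main term $\Dtfct\,\npl(\tilde\Om_t)$, while the Eisenstein, identity, and Kloosterman terms are all absorbed into the error $o(\nv_1(\tilde\Om_t))$, yielding the claimed asymptotic.
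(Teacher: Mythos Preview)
Your overall strategy --- deduce Theorem~\ref{thm-B-H-p} from the general asymptotic formula Theorem~\ref{thm-asf2} applied to the product family $\tilde B_E \times C_t \times\{\pp\}$ --- is exactly what the paper does (via Proposition~\ref{box-restr}, which is the box specialization of Theorem~\ref{thm-asf2}). Your identification of the role of condition~\eqref{bd-cond} at the places in~$E$ is also correct: it is precisely the hypothesis~\eqref{ABc} of Theorem~\ref{thm-asf2}.

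However, there is a genuine gap in your verification of the remaining hypotheses. What Theorem~\ref{thm-asf2} actually requires for the $Q$-factor is not ``$\nv_1\to\infty$'' and ``$\e$-neighbourhoods of the boundary are small as $\e\to 0$'', but the two conditions~\eqref{asf-cond1}: first $m_\r(C_t)=\nv_\r(C_t^+)\nv_{-A}(C_t^-)/\nv_1(C_t)\to 0$, and second $\bt_{\e(C_t)}(C_t^+)\to 0$ for the \emph{specific} value $\e=\e(C_t)$ defined in~\eqref{eps-choice}. You never address the first condition at all, and your boundary argument treats $\e$ as a free parameter tending to zero, whereas in fact $\e(C_t)$ is forced by~\eqref{eps-choice} and tends to zero \emph{only because} $m_\r(C_t)\to 0$. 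So the missing step is the computation (carried out in~\eqref{mr-eps-box}) that for these boxes
\[
m_\r(C_t)\;\ll\;\prod_{j\in Q_+}(a_j(t)+\s)^{\r-1}\;\prod_{j\in Q_-}\pp_j^{-A-1},
\]
which tends to zero precisely under the hypothesis that some $a_j(t)\to\infty$ or some $\pp_j\to\infty$, since $\r<1$ and $A>2$. Once this is in place, $\e(C_t)\to 0$ follows from~\eqref{eps-choice}, and then your $O_\s(\e)$ boundary estimate gives the second condition. Note also that the $E$-coordinate boundary is \emph{not} handled through the $[2\e]$-thickening mechanism; it is dealt with separately in the second-stage argument of~\S\ref{sect-sst} via the approximation of $\ch_{[A_j,B_j]}$ by $C^1_c$ functions, which is where~\eqref{bd-cond} enters.
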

In Theorem~\ref{thm-B-H-p} some factors of $B_E$ may be in the region
$\left[\lbs, \frac14\right)$, giving $\npl(\tilde B_E \times
C_t \times \{\pp\})=0$. So here we need $\nv_1$ in the asymptotic
formula.

Again, the constant size $\s$ in Theorem~\ref{thm-B-H-p} can be
replaced by
\begin{equation} \s(t) = \g \biggl( \sum_{j\in Q_+}\log a_j(t) +
\sum_{j\in Q_-} \log(2\pp_j) \biggr)^{-\al}\qquad
(0<\al<\frac12\,,\; \g>0)\,.
\end{equation}

\subsubsection{Floating spheres} The following is analogous to an
unweighted distribution result, based on the Selberg trace formula,
given by Huntley and Tepper, \cite{HT92}:
\begin{prop}\label{prop-spheres}Let $B(\mm,r)$ be a sphere in $\RR^d$
with center $\mm$ and radius~$r$. Let $\mm_j \geq r+1$ for all~$j$,
and suppose that $\mm_j$ tends to infinity for all~$j$
(not necessarily with the same speed). Then
\[ \nct\left( iB(\mm,r)\right) \sim 4\sqrt{|D_F|} v_d \left( \txtfrac
r\pi\right)^d \prod_j \mm_j \,, \] where $v_d$ denotes the volume of
the unit sphere in~$\RR^d$.
\end{prop}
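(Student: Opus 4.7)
The plan is to apply the main asymptotic formula \eqref{asf} (equivalently, Theorem~\ref{thm-asf2}) directly to the family $\tilde\Om_\mm = iB(\mm, r) \subset (i\RR)^d$, placing every archimedean place of~$F$ in the continuous-spectrum set $Q_+$. Since $\mm_j \geq r + 1$, each coordinate of $\tilde\Om_\mm$ lies in $i[1,\infty)$, so the ball avoids the exceptional range $(0,\nubs]$, the discrete series spectrum, and a neighbourhood of the origin. Consequently $\nv_1(\tilde\Om_\mm) \asymp \npl(\tilde\Om_\mm)$, so the error term $o(\nv_1)$ in~\eqref{asf} is $o$ of the main term, yielding a genuine asymptotic equivalence.

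To evaluate $\npl(iB(\mm, r))$, I use~\eqref{npl-def}--\eqref{nplf-def}: only the continuous-spectrum integral contributes, giving
\[ \npl(iB(\mm, r)) = 2^d \int_{B(\mm, r)} \prod_{j=1}^d \nplf_j(t_j) \, dt, \]
where $\nplf_j(t_j) = t_j \tanh(\pi t_j)$ or $t_j \coth(\pi t_j)$ according to~$\x_j$. Because $\mm_j \to \infty$ for every $j$, the hyperbolic factors converge uniformly to $1$ on $[\mm_j - r, \mm_j + r]$. Expanding $\prod_j t_j$ in Taylor series about $\mm$ and observing that the odd-order terms integrate to zero by the central symmetry of $B(\mm, r)$, I obtain
\[ \npl(iB(\mm, r)) = 2^d v_d\, r^d \prod_{j=1}^d \mm_j \cdot \bigl(1 + o(1)\bigr). \]
Multiplication by $\Dtfct$ collects the constants into the stated main term.

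The main obstacle is verifying the boundary-smallness condition of Theorem~\ref{thm-asf2} for this family. The $\eta$-thickened boundary of $iB(\mm, r)$ has Lebesgue measure $O(\eta r^{d-1})$, and so $\nv_1$-measure $O(\eta r^{d-1} \prod_j \mm_j)$, which is only an $O(\eta)$ fraction of $\nv_1(iB(\mm, r))$; since $r$ is fixed this ratio does not decay with $\mm$. To handle this, I would sandwich $iB(\mm, r)$ between inner and outer approximations by $\s$-hypercubes parallel to the coordinate axes, so that monotonicity of $\nct$ gives $\nct(\Kcal^-_\s) \leq \nct(iB(\mm,r)) \leq \nct(\Kcal^+_\s)$, apply Theorem~\ref{thm-B-H-p} (with $E = Q_- = \emptyset$) to each constituent hypercube, and let $\s = \s(\mm) \to 0$ slowly — at a rate admissible under the refinement $\s(t) = \g\bigl(\sum_j \log \mm_j\bigr)^{-\al}$ noted after Theorem~\ref{thm-B-H-p}. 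The shell $\Kcal^+_\s \setminus \Kcal^-_\s$ has $\nv_1$-measure $O(\s r^{d-1} \prod_j \mm_j)$, and the delicate point is to ensure that the $o(\cdot)$ errors arising from the $O((r/\s)^d)$ hypercubes aggregate to $o(\prod_j \mm_j)$; this requires the uniformity in the position parameters~$a_j$ built into the error estimate in the proof of Theorem~\ref{thm-B-H-p} (ultimately Lemma~\ref{lem-sf-q} and the choice~\eqref{eps-choice}). Once this uniformity is in hand, sandwiching concludes the proof.
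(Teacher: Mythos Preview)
Your initial instinct---to apply Theorem~\ref{thm-asf2} directly with $E=Q_-=\emptyset$ and $C_t^+=iB(\mm,r)$---is exactly what the paper does in \S5.4, and it succeeds without any hypercube approximation. The gap in your analysis is the claim that the boundary ratio ``does not decay with~$\mm$.'' In the hypothesis~\eqref{asf-cond1} of Theorem~\ref{thm-asf1} (which Theorem~\ref{thm-asf2} inherits), the thickening parameter is not a fixed $\eta$ but the adaptive quantity $\e(C_t)$ defined in~\eqref{eps-choice}. Since $m_\r(C_t)\ll \prod_j \mm_j^{\r-1}\to 0$, one has $\e(C_t)\to 0$, and hence your own estimate
\[
\frac{\nv_1\bigl(iB(\mm,r)[2\e(C_t)]\bigr)}{\nv_1\bigl(iB(\mm,r)\bigr)} \;\ll\; \frac{\e(C_t)}{r} \;\longrightarrow\; 0
\]
already verifies the second condition in~\eqref{asf-cond1}. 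The paper makes this explicit via the inclusion $S_{Q_+}(\mm,r)[2\e]\subset S_{Q_+}(\mm,r+2\e\sqrt d)\setminus S_{Q_+}(\mm,r-2\e\sqrt d)$.

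Consequently the sandwiching by $\s$-hypercubes, and the delicate uniformity argument you flag, are unnecessary detours. Your computation of $\npl(iB(\mm,r))$ and the observation that $\nv_1\asymp\npl$ on this region (because all $\mm_j\to\infty$) are correct and are all that is needed beyond checking~\eqref{asf-cond1}.
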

This asymptotic formula holds even if $r=r(\mm)$ is allowed to go down
to zero not faster than $\left( \log\prod_{j\in Q_+}\mm_j
\right)^{-\al}$, with $\al<\frac12$.

\subsubsection{Slanted strips in the quadratic case}
\begin{prop}\label{prop-slant}Let $d=2$, and put
\[ \tilde \Om_t = \left\{ (\nu_1,\nu_2) \in \left(i[1,\infty)
\right)^2\;:\; t \leq |\nu_1|\leq 2t,\, a|\nu_1|+b \leq |\nu_2| \leq
a|\nu_1|+c\right\}\,, \] with $a>0$, $c>b$ fixed and $t$ large. Then
\[ \nct(\tilde \Om_t) \sim \frac{14}{3\pi^2} \sqrt{|D_F|}
\;a(c-b)t^3\qquad(t\rightarrow\infty)\,. \]
\end{prop}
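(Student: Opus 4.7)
The plan is to deduce the claim from the main asymptotic formula~\eqref{asf} of Theorem~\ref{thm-asf2} and reduce it to an explicit evaluation of $\npl(\tilde\Om_t)$. Since $\tilde\Om_t\subset (i[1,\infty))^2$, both spectral coordinates stay uniformly bounded away from $(0,\nubs]\cup i[0,\e)$ with, say, $\e=1$. By the remarks following~\eqref{tV1def} we then have both $\npl(\tilde\Om_t)\ll\nv_1(\tilde\Om_t)$ and $\nv_1(\tilde\Om_t)\ll\npl(\tilde\Om_t)$, so the two measures are comparable on our family. Consequently the error term $o(\nv_1(\tilde\Om_t))$ in~\eqref{asf} becomes $o(\npl(\tilde\Om_t))$ and the conclusion of Theorem~\ref{thm-asf2} simplifies to
\begin{equation*}
\nct(\tilde\Om_t) \sim \frac{2\sqrt{|D_F|}}{(2\pi)^2}\,\npl(\tilde\Om_t).
\end{equation*}

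Next I would verify that $\tilde\Om_t$ satisfies the regularity hypotheses of Theorem~\ref{thm-asf2}. The region is a piecewise-linear slanted parallelogram whose diameter tends to infinity, with perimeter $O(t)$. On $\tilde\Om_t$ the Plancherel density $|\nu_1||\nu_2|$ is $O(t^2)$, so an $\e$-thickening of $\partial\tilde\Om_t$ has $\npl$-measure $O(\e t\cdot t^2)=O(\e t^3)$. Since the main term is of order $t^3$ (see below), this boundary contribution becomes negligible once $\e=\e(t)\to 0$ is chosen in line with the balancing mechanism built into Theorem~\ref{thm-asf2}.

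The Plancherel evaluation is then routine. Writing $\nu_j=iu_j$ and using that for $u\geq 1$ both $\tanh\pi u$ and $\coth\pi u$ equal $1+O(\en^{-2\pi u})$, formulas~\eqref{npl-def}--\eqref{nplf-def} yield
\begin{align*}
\npl(\tilde\Om_t) &= 4\int_t^{2t}\!\!\int_{au+b}^{au+c} uv\,dv\,du + O(t^2) \\
 &= 2(c-b)\int_t^{2t} u\,(2au+b+c)\,du + O(t^2) \\
 &= 2(c-b)\!\left[\tfrac{2a}{3}(8t^3-t^3)+\tfrac{b+c}{2}(4t^2-t^2)\right] + O(t^2) \\
 &\sim \tfrac{28\,a(c-b)}{3}\,t^3.
\end{align*}
Multiplying by $\frac{2\sqrt{|D_F|}}{(2\pi)^2}=\frac{\sqrt{|D_F|}}{2\pi^2}$ produces the constant $\frac{14\,a(c-b)}{3\pi^2}\sqrt{|D_F|}$, matching the claim.

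The only real obstacle is matching the slanted-strip geometry to the precise boundary-regularity form required by Theorem~\ref{thm-asf2}; in particular, verifying that the thickening estimate $O(\e t^3)$ is compatible with the prescribed rate $\e=\e(t)\to 0$ used in the error-term analysis underlying that theorem. Once that template is invoked, the Plancherel integral is entirely elementary.
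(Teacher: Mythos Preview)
Your approach is essentially the same as the paper's: apply Theorem~\ref{thm-asf2} with $E=\emptyset$, $Q_+=\{1,2\}$, $Q_-=\emptyset$, verify the hypotheses of~\eqref{asf-cond1}, and compute the Plancherel measure of $\tilde\Om_t$ explicitly. Your Plancherel integral and your boundary estimate $\nv_1(\tilde\Om_t[2\e])\ll \e t^3$ both agree with the computations in the paper's \S6.5 (the paper records $\nv_1(\tilde\Om_t)\sim\frac73 a(c-b)t^3$, which is your $\frac{28}{3}a(c-b)t^3$ divided by the factor $4$ relating $\npl$ to $\nv_1$ on $(i[1,\infty))^2$).

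There is one genuine omission. Theorem~\ref{thm-asf2} requires \emph{both} conditions in~\eqref{asf-cond1}, and you address only the second (the boundary-thickening condition). The first condition is $\nv_\rho(C_t^+)=o(\nv_1(C_t))$, equivalently $m_\rho(C_t)\to 0$. This is easy here---$\nv_\rho(\tilde\Om_t)\ll t^{2\rho+1}$ with $\rho<1$ against $\nv_1(\tilde\Om_t)\asymp t^3$---but it must be checked, because the prescribed thickening scale $\e=\e(C_t)$ in~\eqref{eps-choice} is defined in terms of $m_\rho(C_t)$, and $\e(C_t)\to 0$ holds \emph{because} $m_\rho(C_t)\to 0$. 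In other words, the ``balancing mechanism'' you appeal to at the end is driven by the very condition you did not verify; once you add that line, the argument is complete and matches the paper.
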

Proposition~\ref{prop-slant} shows that we see infinitely many points
$\nu_\varpi$ in a slanted direction. This slanted strip becomes a
sector in $\ld$-space. It opens up with a speed of the order
$|\ld_1|^{1/2}$.

\subsubsection{Density} Theorem \ref{thm-B-H-p} directly implies the
density results in Propositions 3.7 and 3.8 in~\cite{BMP3a}, without
the restriction $\ch=1$ and $\x=0$. The idea is that we take
$|E|=d-1$ in Theorem~\ref{thm-B-H-p}, and use $B_E$ to restrict
$(\ld_{\varpi,j})_{j\in E}$ to a tiny set with positive Plancherel
measure. The remaining coordinate of $\ld_\varpi$ is allowed to range
through a set in $\RR$ with increasing Plancherel measure. This shows
that there are infinitely many $\varpi$ with
$(\ld_{\varpi,j})_{j\in E} \in B_E$.

If one of the factors $[A_j,B_j]$ of $B_E$ is contained in the
interval $[\lbs,\frac14)$ of exceptional eigenvalues, the Plancherel
measure of $B_E \times C_t$ is zero. The asymptotic formula cannot
give the absence of $\varpi$ with $\ld_{\varpi,j} \in [A_j,B_j]$, but
only that the density is small in comparison with
$\nv_1(B_E\times C_t)$. We note that we project here along a
coordinate axis in the natural product structure of $\RR^d$ as the
product of the archimedean completions of~$F$.

\subsubsection{Weighted Weyl laws} In this subsection we list some
consequences of the asymptotic formula in the $\ld$-parameter. We
have that the asymptotic formula holds for $\Om_t = [-t,t]^d$. This
implies
\begin{prop}\label{prop-Weyl1}
\[ \sum_{\varpi,\, |\ld_{\varpi,j}|\leq t \text{ for all }j} \left|
c^r(\varpi) \right|^2 \sim \frac{2\sqrt{|D_F|}}{\pi^d}
t^d\qquad(t\rightarrow\infty)\,.\]
\end{prop}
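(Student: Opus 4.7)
The plan is to deduce Proposition~\ref{prop-Weyl1} from the asymptotic formula~\eqref{asf}, whose validity on the family $\Om_t = [-t,t]^d$ is asserted in the paragraph preceding the statement. What remains is therefore a direct computation of $\pl(\Om_t)$ and of the reference measure $\v_1(\Om_t)$.

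First I would exploit the product structure $\pl = \bigotimes_j \pl_{\x_j}$ and write $\pl(\Om_t) = \prod_j \pl_{\x_j}([-t,t])$. Each one-dimensional factor splits into a continuous and a discrete-series contribution. The continuous part is $\int_{1/4}^t \tanh\pi\sqrt{\ld-\txtfrac14}\,d\ld$ (or its $\coth$-analogue for $\x_j=1$); since the hyperbolic density tends to $1$ at infinity and the singularity of $\coth$ at $\ld=\txtfrac14$ is integrable, this contributes $t+O(1)$. The discrete part sums $(b-1)$ over $b\geq 2$, $b\equiv\x_j\bmod 2$, with $|b/2-b^2/4|\leq t$; the cutoff is $b\leq 1+\sqrt{1+4t}$, and writing $b=2k$ or $b=2k+1$ the sum evaluates to $t+O(\sqrt t)$. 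Thus $\pl_{\x_j}([-t,t])=2t+O(\sqrt t)$, so $\pl(\Om_t)\sim 2^d t^d$.

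Next I would check analogously, using the definition~\eqref{tV1def} transported to the $\ld$-variable via $\ld=\txtfrac14-\nu^2$, that $\v_1(\Om_t)\asymp t^d$: the continuous factor $\int_1^{\sqrt{t-1/4}} s\,ds\sim t/2$ and the discrete factor $\sum_{\bt>0,\,\bt\lesssim\sqrt t}\bt\sim t/2$ each contribute an order-$t$ term per place, and the contributions from $\int_0^1 ds$ and the exceptional interval $[0,\nubs]$ are bounded. Hence the error $o(\v_1(\Om_t))$ appearing in~\eqref{asf} is $o(t^d)$.

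Substituting $\pl(\Om_t)\sim 2^d t^d$ and the bound $o(t^d)$ on the error into~\eqref{asf} gives
\[ \ct(\Om_t) = \frac{2\sqrt{|D_F|}}{(2\pi)^d}\cdot 2^d t^d + o(t^d) = \frac{2\sqrt{|D_F|}}{\pi^d}\,t^d + o(t^d), \]
which is the claim. The only step requiring genuine care is ensuring that the continuous and discrete contributions are correctly combined in both measures (the behaviour of the Plancherel density at the bottom of the spectrum $\ld=\txtfrac14$ is slightly different for $\x_j=0$ and $\x_j=1$, and the discrete cutoff must be paired with the right parity); the remainder of the argument is a routine asymptotic evaluation. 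I anticipate no serious obstacle beyond keeping careful bookkeeping of these parity cases.
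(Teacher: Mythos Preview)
Your proposal is correct and follows exactly the paper's approach: the paper's proof is the single sentence ``Now an approximation of $\pl\left([-X,X]^d\right)$ gives Proposition~\ref{prop-Weyl1},'' and you have simply written out that approximation in detail, correctly obtaining $\pl_{\x_j}([-t,t])=2t+O(\sqrt t)$ from the continuous and discrete contributions and combining with the error bound $\v_1(\Om_t)\asymp t^d$.
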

This result confirms that there are infinitely many cuspidal~$\varpi$
for each choice of $\ch$ and $\x$ satisfying~\eqref{ch-x-comp}. With
the normalization of the~$c^r(\varpi)$ that we have chosen, the
density does not depend on the order of the Fourier coefficients that
we use.

\begin{prop}\label{prop-Weyl2}Let $Q_+ \sqcup Q_-$ a partition of the
real places of $F$. Then
\[ \sum_{\begin{array}{c}\varpi,\, \sum_j|\ld_{\varpi,j}|\leq t\\
\ld_{\varpi,j}\leq 0,\, j\in Q_-\\
\ld_{\varpi,j}>0\,,j\in Q_+\end{array}} \left| c^r(\varpi)
\right|^2 \sim \frac{2\sqrt{|D_F|}}{d!\, (2\pi)^d} t^d \]
\end{prop}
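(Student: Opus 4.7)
The plan is to apply the main asymptotic formula \eqref{asf} of Theorem~\ref{thm-asf1} to the family
\[
\Om_t = \Bigl\{\ld\in\RR^d : \sum_j|\ld_j|\leq t,\; \ld_j>0 \text{ for }j\in Q_+,\; \ld_j\leq 0 \text{ for }j\in Q_-\Bigr\}\,.
\]
After reflecting the coordinates indexed by $Q_-$, this is (up to a boundary of codimension one) a Euclidean $d$-simplex of side $t$, so its Lebesgue volume grows like $t^d/d!$ and the $\e$-neighborhood of its boundary grows like $\e\, t^{d-1}$. This makes the growth and boundary regularity hypotheses underlying Theorem~\ref{thm-asf1} straightforward to verify, and in particular $\v_1(\Om_t)\to\infty$.

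The substantive step is the computation of $\pl(\Om_t)$. Since $\pl=\bigotimes_j\pl_{\x_j}$ is a product measure and $\Om_t$ takes the product-simplex form $\{u\in [0,\infty)^d:\sum_j u_j\leq t\}$ in the coordinates $u_j=|\ld_j|$, I reduce to a one-dimensional asymptotic per coordinate. For $j\in Q_+$ the continuous density $\tanh\pi\sqrt{\ld-\txtfrac14}$ of $\pl_{\x_j}$ differs from $1$ by an exponentially small amount, so its restriction to $\ld_j>0$ is asymptotically Lebesgue in $u_j=\ld_j$. For $j\in Q_-$ only the discrete series contribute: writing $u_j=b_j(b_j-2)/4$ with $b_j\geq 2$ of parity $\x_j$, the weight $b_j-1$ assigned by $\pl_{\x_j}$ to the point $\ld_j=-u_j$ combines with the spacing $\Dt u_j=b_j-1$ between consecutive admissible $b_j$ to give cumulative mass $\pl_{\x_j}\bigl([-u,0]\bigr)=u+O(\sqrt u)$. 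Hence each marginal is asymptotically Lebesgue on $[0,\infty)$ in $u_j$, and elementary integration over the simplex yields
\[
\pl(\Om_t)=\frac{t^d}{d!}+O\bigl(t^{d-1/2}\bigr)\,.
\]

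A parallel analysis of $\nv_1$ based on \eqref{tV1def} shows $\v_1(\Om_t)\asymp t^d$, and in particular $\v_1(\Om_t)\ll \pl(\Om_t)$, so the simplified form \eqref{asfa} applies and gives
\[
\ct(\Om_t)\sim \frac{2\sqrt{|D_F|}}{(2\pi)^d}\cdot\frac{t^d}{d!} = \frac{2\sqrt{|D_F|}}{d!\,(2\pi)^d}\,t^d\,,
\]
as required. The only non-routine point I expect is the comparison of the discrete-series part of $\pl_{\x_j}$ with Lebesgue measure on the negative $\ld_j$-axis; everything else reduces to elementary simplex integration combined with the machinery already developed for Theorem~\ref{thm-asf1}.
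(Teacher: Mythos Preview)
Your sketch misidentifies what Theorem~\ref{thm-asf1} actually proves. The asymptotic formula~\eqref{asf} is an informal summary; Theorem~\ref{thm-asf1} establishes it only for families of the product form $C_t=C_t^+\times C_t^-$ with $C_t^+\subset\bigl(i[1,\infty)\bigr)^{Q_+}$ and $C_t^-\subset[\tfrac12,\infty)^{Q_-}$. Your simplex $\Om_t$ is \emph{not} such a product when both $Q_+$ and $Q_-$ are nonempty: the constraint $\sum_{j\in Q_+}\ld_j+\sum_{j\in Q_-}|\ld_j|\leq t$ couples the two blocks of coordinates. So the phrase ``the growth and boundary regularity hypotheses underlying Theorem~\ref{thm-asf1}'' does not match the theorem's actual hypotheses, and the appeal to it fails as stated. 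A secondary issue is that Theorem~\ref{thm-asf1} requires $\ld_j\geq\tfrac54$ on the $Q_+$ coordinates, whereas your $\Om_t$ allows $0<\ld_j<\tfrac54$; this strip must be handled separately.

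The paper deduces Proposition~\ref{prop-Weyl2} as a corollary of Proposition~\ref{prop-simp}, which overcomes the non-product structure as follows. One first restricts to the region $\ld_j\geq\tfrac54$ for $j\in Q_+$, and then writes the simplex as a countable disjoint union over lattice points $\pp\in\prod_{j\in Q_-}\bigl(\tfrac{\x_j+1}{2}+\NN_0\bigr)$: the slice at each $\pp$ is a $|Q_+|$-dimensional simplex $W_{|Q_+|}\bigl(t-\sum_{j\in Q_-}(\pp_j^2-\tfrac14)\bigr)$ times the singleton $\{\pp\}$, which \emph{is} of product form. One then applies the union version, Proposition~\ref{prop-asf1-u}, and verifies conditions~\eqref{asf-cond1-u} by explicit estimates of $\nv_1$, $m_\r$, and $\bt_\e$ summed over~$\pp$. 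The omitted boundary pieces near $\ld_j\in[0,\tfrac54]$ are shown to contribute $O(t^{d-1})$. Your computation of $\pl(\Om_t)\sim t^d/d!$ is correct in spirit, but the structural decomposition is the essential missing step.
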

This variant is Corollary 3.4 in \cite{BMP3a}. There we considered
only the trivial character of $\G_0(I)$ and the trivial central
character. Proposition~\ref{prop-simp} implies that all results
in~\cite{BMP3a} extend to the more general context in this paper.

\subsubsection{Sectors in the quadratic case}\label{sss-sectquad}
\begin{prop}\label{prop-sector}Let $d=2$, and fix $0<p<q$,
$\al>\frac12$. For $t\geq \frac 54(1+\nobreak p^{-1})$ put
\[ \Om_t = \biggl\{ (\ld_1,\ld_2)\in [0,\infty)^2\;:\; t \leq
\ld_1\leq t+t^\al\,, p\ld_1\leq \ld_2\leq q\ld_1\biggr\}\,.\] Then
\[ \ct (\tilde \Om_ t) \sim \frac{q-p}4 t^{1+\al}
\qquad(t\rightarrow\infty)\,.\]
\end{prop}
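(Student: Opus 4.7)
The plan is to apply Theorem~\ref{thm-asf2} to the family $t\mapsto \Om_t$ and then to evaluate the main term $\Dtfct\,\pl(\Om_t)$ by elementary integration, using that $\Om_t$ lies far from the exceptional and discrete spectra.

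First I verify the hypotheses. For $t\geq \txtfrac54(1+p^{-1})$ every point of $\Om_t$ has both coordinates at least $\txtfrac54$, so $\Om_t$ lies well inside $[\txtfrac14,\infty)^2$ and stays uniformly away from the exceptional eigenvalues and from the discrete-series points $\txtfrac b2(1-\txtfrac b2)$. In the $\nu$-picture this gives $\tilde\Om_t\subset (i[1,\infty))^2$. The boundary $\partial\Om_t$ consists of two slanted segments on $\ld_2=p\ld_1$ and $\ld_2=q\ld_1$ of length $O(t^\al)$, and two vertical segments on $\ld_1\in\{t,t+t^\al\}$ of length $(q-p)\ld_1\asymp t$; the $\dt$-neighborhood of $\partial\Om_t$ therefore has planar measure $O(\dt\cdot t)$.

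Second I compute the Plancherel measure. Since $\ld_j\geq pt$ on $\Om_t$, each density factor $\tanh\pi\sqrt{\ld_j-\txtfrac14}$ (or $\coth\pi\sqrt{\ld_j-\txtfrac14}$, depending on $\x_j$) equals $1+O(e^{-\pi\sqrt{pt}})$ uniformly on $\Om_t$, so $\pl(\Om_t)=(1+o(1))\vol(\Om_t)$. Direct integration gives
\[
\vol(\Om_t)=\int_t^{t+t^\al}(q-p)\ld_1\,d\ld_1=\frac{q-p}{2}\bigl((t+t^\al)^2-t^2\bigr)=(q-p)\,t^{1+\al}+\frac{q-p}{2}\,t^{2\al}\,,
\]
whose leading term, for $\al\in(\txtfrac12,1)$, is $(q-p)\,t^{1+\al}$. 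Because we are far from the exceptional and small-$\nu$ regions, $\v_1(\Om_t)\ll \pl(\Om_t)\asymp t^{1+\al}$, so the error term $o(\v_1(\Om_t))$ from Theorem~\ref{thm-asf2} is $o(t^{1+\al})$ and is absorbed into the asymptotic. Combining with the factor $\Dtfct$ delivers $\ct(\Om_t)\sim C_{p,q}\, t^{1+\al}$.

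The main obstacle is verifying the ``small boundary'' hypothesis of Theorem~\ref{thm-asf2} quantitatively: since the $\e$-thickening of $\partial\Om_t$ has area $\asymp \e\cdot t$ while $\Om_t$ itself has area $\asymp t^{1+\al}$, one must choose the test-function sharpness $\e=\e(t)\to 0$ slowly enough that the Bessel-transform estimates for the Kloosterman term survive (so that the product test function $\prod_j\ph_j$ approximates $\ch_{\Om_t}$ from above and below) yet fast enough that $\e\cdot t=o(t^{1+\al})$. A choice of the form $\e=t^{-\bt}$ with $\bt\in(0,\al)$ works, following the balancing scheme used in equations~(\ref{mr-bt-def})--(\ref{eps-choice}) of the paper.
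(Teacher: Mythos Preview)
Your overall strategy---apply Theorem~\ref{thm-asf2} with $E=\emptyset$, $Q_+=\{1,2\}$, verify conditions~\eqref{asf-cond1}, and compute the main term---is exactly the paper's. The computation of $\pl(\Om_t)$ via $\vol(\Om_t)$ is correct. But your verification of the boundary condition contains two linked errors that together miss the reason for the hypothesis $\al>\tfrac12$.

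First, the thickening $C^+[2\e]$ in~\eqref{C(c)-def} is taken in the $\nu$-coordinate, not in $\ld$-space. Near $|\nu_j|\sim\sqrt{\ld_j}$ a $2\e$-step in $\nu_j$ corresponds to roughly a $4\e\sqrt{\ld_j}$-step in $\ld_j$. The vertical sides of $\Om_t$ (at $\ld_1=t$ and $\ld_1=t+t^\al$) therefore contribute $\nv_1\asymp\e\,t^{3/2}$ to the thickened boundary, not $\e\,t$ as your Lebesgue estimate in $\ld$-space gives; this is precisely what the lemma in~\S\ref{sect-special} (sectors) computes. Hence $\bt_\e(C_t)\asymp\e\,t^{1/2-\al}$, not $\e\,t^{-\al}$.

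Second, $\e=\e(C_t)$ is not a free parameter. The scheme in~\eqref{eps-choice} forces $\e\asymp(\log\log t/\log t)^{1/2}$: the constraint $e^{t_0U|Q_+|}m_\r(C_t)\to0$ with $m_\r\asymp t^{\r-1}$ caps $U$ at $O(\log t)$, and then $U\e^2\to\infty$ permits only logarithmic decay of~$\e$. A choice $\e=t^{-\bt}$ with $\bt>0$ would require $U\gg t^{2\bt}$, making the Kloosterman error $e^{t_0U|Q_+|}m_\r$ blow up. With the correct logarithmic $\e$ and the correct boundary ratio $\bt_\e\asymp\e\,t^{1/2-\al}$, the condition $\bt_\e\to0$ holds exactly because $\al>\tfrac12$; your analysis as written would go through for any $\al>0$, which is false.
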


\section{Sum formula} The basis of the result in this paper is the
Kuznetsov type sum formula in Theorem 3.21 of~\cite{BM6+}, which we
recall in \S\ref{sect-stat-sf}. In \S\ref{sect-appl-sf} we apply it
with test functions adapted to the present purpose.

\subsection{Statement of the sum formula}\label{sect-stat-sf}
The sum formula as stated in \cite{BM6+} depends on two non-zero
Fourier term orders $r,r'\in \Ocal'\setminus\{0\}$. For the end
results of this paper it suffices to take $r'=r$. In a later paper we
intend to work with different Fourier term orders and take Hecke
operators into account. Then we'll need to consider $r\neq r'$ as
well.

The sum formula states an equality with four terms, all depending on a
given test function. We first state the sum formula, and will next
recall the description of the ingredients.

\begin{thm}{\rm Spectral sum formula. }\label{thm-sf}For any test
function $\ph \in T_\x(\tau,a)$ the sums and integrals $\nctr
(\ph)$, $\ectr(\ph)$, $\npl(\ph)$ and $\K^r_\ch \left( \B^\sg_\x
\ph\right)$ converge absolutely, and
\begin{equation}
\label{sf} \nctr (\ph) + \ectr(\ph) = \Dtfct \npl(\ph) + \K^r_\ch
\left( \B^\sg_\x \ph\right).
\end{equation}\end{thm}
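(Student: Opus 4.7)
The plan is to derive \eqref{sf} by computing a Petersson-type inner product of two Poincaré series in two different ways, following the Kuznetsov--Bruggeman strategy adapted to the Hilbert modular group $\G_0(I) \subset G = \SL_2(\RR)^d$ (this is essentially the route carried out in the companion paper \cite{BM6+} from which \eqref{sf} is quoted). Given $\ph \in T_\x(\tau,a)$, the first step is to associate with each non-zero Fourier-term order $r \in \Ocal'$ a Poincaré series $P^r_\ph$ on $\G_0(I)\backslash G$ transforming under $\ch$ and built from $\ph$ place-by-place via an Iwasawa--Whittaker model at each real place of $F$. The admissibility class $T_\x(\tau,a)$ is designed precisely so that $P^r_\ph$ converges absolutely and lies in $L^2(\G_0(I)\backslash G,\ch)$.

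Next, one computes $\langle P^r_\ph, P^r_\ph\rangle$ spectrally using the decomposition \eqref{L2decomp}: projecting onto the cuspidal subspace yields, after invoking the Wigner--Bessel transform of $\SL_2(\RR)$ at each infinite place, the weighted sum $\sum_\varpi \ph(\nu_\varpi)|c^r(\varpi)|^2 = \nctr(\ph)$, while the continuous spectrum supplies $\ectr(\ph)$; the residual piece (the constant function when $\ch=1$) contributes nothing since $r\neq 0$. Geometrically, one unfolds one of the Poincaré series against the other and invokes the Bruhat decomposition of $\G_0(I)$: the identity double coset combined with the Whittaker inner integral yields the Plancherel term $\Dtfct\,\npl(\ph)$, with the factor $\sqrt{|D_F|}$ originating from the covolume of $\Ocal$ under the chosen normalization of the inverse different, whereas the non-trivial Weyl-element double cosets $B\,w\matc1001_{c/1}B$ indexed by $c\in I\setminus\{0\}$ produce, via the oscillatory inner integral whose $\nu$-side is the Bessel transform $\B^\sg_\x$ acting place-by-place, the Kloosterman term $\K^r_\ch(\B^\sg_\x \ph)$. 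Equating the two calculations yields \eqref{sf}.

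The chief obstacle is the simultaneous absolute convergence of all four terms, and this is exactly what the intricate definition of $T_\x(\tau,a)$ is engineered to guarantee. The cuspidal sum converges once one disposes of a polynomial Weyl-type bound for $\sum_{\ld_\varpi\in B}|c^r(\varpi)|^2$ on bounded spectral windows $B$; the Eisenstein contribution follows from standard bounds on the critical line; the Plancherel integral is immediate from the polynomial growth of $\nplf_j$. The delicate step is the Kloosterman--Bessel side: one must control $\B^\sg_\x \ph$ as each coordinate $\nu_j\to 0$, near the discrete-series points $\nu_j = (b-1)/2$, and as $|\nu_j|\to\infty$, and then combine this with Weil's bound for $\kls{c}(r,r;\ch)$ and with the summation over the moduli $c\in I\setminus\{0\}$. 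It is at this point that the product structure $\ph = \prod_j \ph_j$ is indispensable, since the Bessel transform for $\SL_2(\RR)$ acts one real place at a time, and this restriction propagates to the test-function hypothesis used throughout the remainder of the paper.
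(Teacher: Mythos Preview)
The present paper does not prove Theorem~\ref{thm-sf} at all: it is quoted verbatim as Theorem~3.21 of the companion memoir~\cite{BM6+}, and \S\ref{sect-stat-sf} merely recalls the statement and describes the four ingredients $\nctr(\ph)$, $\ectr(\ph)$, $\npl(\ph)$, and $\K^r_\ch(\B^\sg_\x\ph)$. You have correctly identified this, and your sketch is a faithful outline of the Poincar\'e-series/unfolding strategy actually carried out in~\cite{BM6+}; there is no alternative proof in the paper to compare it with.

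A few small inaccuracies in your sketch are worth flagging for when you write this up. First, in~\cite{BM6+} the formula is derived for two Fourier-term orders $r,r'$ (so one pairs $P^r$ against $P^{r'}$, not $P^r$ against itself); the specialization $r'=r$ is made only in the present paper. Second, your Bruhat notation ``$B\,w\matc1001_{c/1}B$'' is garbled; the non-identity double cosets are parametrized by $c\in I\setminus\{0\}$ via $\matc 0{-1}10\matc 1{*}01$-type representatives, and the sum over residues modulo~$c$ is what produces $\kls\ch(r,r;c)$. Third, the convergence of the Kloosterman term does not actually require Weil's bound; the trivial bound $|\kls\ch(r,r;c)|\leq |N(c)|$ together with the estimate~\eqref{Bfest} on $\B^\sg_\x\ph$ already suffices, since $\tau>\tfrac14$ forces $2\tau>\tfrac12$. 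The Weil-type bound~\eqref{We} is invoked only later, in \S\ref{sect-appl-sf}, to sharpen the error terms.
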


We work with a fixed character $\ch$ of $\G$ and a compatible central
character given by $\x\in \{0,1\}^d$; so condition \eqref{ch-x-comp}
is satisfied. Fixed is $r\in \Ocal'\setminus\{0\}$. Fixed are also
the parameters $\tau \in \left( \frac14,\frac12\right)$ and $a>2$,
which determine the space of test functions.

\subsubsection{Test functions}\label{sect-tf}The space $T_\x(\tau,a)$
of test functions consists of functions with the following product
structure:
\begin{equation}\label{phpr}
\ph(\nu_1,\ldots,\nu_d) = \prod_j \ph_j(\nu_j),
\end{equation}
where each $\ph_j$ is a function on a set
\begin{equation}
\left\{ z\in \CC\;:\; |\re z|\leq \tau\right\} \cup \left\{
   \txtfrac{b-1}2\;:\; b\equiv \x_j\bmod 2,\, b\geq2\right\}
\end{equation}
satisfying the following conditions:
\begin{enumerate}
\item[(T1)] $\ph_j$ is holomorphic on $|\re z|\leq \tau$;
\item[(T2)] $\ph_j(z) \ll \left(1+|z|\right)^{-a}$ on the domain of
$\ph_j$,
\item[(T3)] $\ph_j(-z) = \ph_j(z)$ on the strip $|\re z|\leq \tau$.
\end{enumerate}

\subsubsection{Spectral side}In the left hand side of \eqref{sf} are
two terms connected to the spectral decomposition of
$L^2_\x(\G_0(I)\backslash G,\ch)$. The first term $\nctr(\ph)$ is the
sum defined in~\eqref{tNrfdef}. Its convergence already implies the
existence of infinitely many cuspidal automorphic representations in
$L^{2,\mathrm{cusp}}_\x(\G_0(I)\backslash G,\ch)$.

The orthogonal complement in $L^2_\x(\G_0(I)\backslash G,\ch)$ of the
cuspidal subspace gives rise to the term $\ectr(\ph) $. Since $r$ is
non-zero, the constant functions, in the case $\ch=1$, $\x=0$, do not
contribute to the sum formula. We have:
\begin{align}\label{ectrdef}
&\ectr(\ph) \displaybreak[0]
\\
\nonumber &\quad= \sum_{\k\in \Pcal_\ch} c_\k \sum_{\mu\in
\Ld_{\k,\ch}} \int_{-\infty}^\infty \left| D^r_\x(\k,\ch;it,i\mu)
\right| \ph(it+i\mu)\, dt.
\end{align}
$\Pcal_\ch$ is a set of representatives of cuspidal orbits suitable
for the character $\ch$. For each $\k$, there is a lattice
$\Ld_{\k,\ch}$in the hyperspace $\sum_j x_j=0$ in $\RR^d$. In
$it+i\mu$, the real number $t$ is identified to
$(t,t,\ldots,t)\in \RR^d$. The positive constants $c_\k$ come from
the spectral formula for the continuous spectrum. The
$D^r_\x(\k,\ch;it,i\mu)$ are normalized Fourier coefficients of
Eisenstein series. See (2.31) of~\cite{BM6+}. There is a positive
real number $q$ such that
\begin{equation}
\label{Dest} D^r_\x(\k,\ch;it,i\mu) \ll_{F,I,r}
\begin{cases}
\left(\log(2+|t|)\right)^q+ \left(\log
\max_j(|\mu_j|+1)\right)^q&\text{ if }\mu\neq0,\\
\left(\log(2+|t|)\right)^q&\text{ if }\mu=0;
\end{cases}
\end{equation}
This is discussed in Proposition 5.2 in \cite{BMP3a} and \S4.2 in
\cite{BM6+}, with $q=7$.

Actually, it is conceivable that for some fields $F$, some ideals $I$
and some cusps $\k$, there might be $\mu \in \Ld_{\k,\ch}$ with
$\mu\neq 0$ and $\max_j |\mu_j|<1$. This is not intended in
\cite{BMP3a}. The logarithms arise in (71) and (72) of \cite{BMP3a}.
Checking the reasoning there, we see that if the bounds for
$Z(s,\ld,\tau)$ are negative, they can be replaced by $0$. Thus, we
can replace $\left(\log \max_j \bigl(|\mu_j|+1\bigr)\right)^7$ in
\eqref{Dest} by $\left(\log \bigl(
\max_j( |\mu_j|+2)\bigr)\right)^7$.

There is another reformulation. Put $x_i = t + \mu_i$. Then $t =
\frac1d \sum_j x_j$ and $\mu_i = x_i - \frac1d \sum_j x_j$. This
implies that we have $|t| \ll \sum_j|x_j|$ and $\|\mu\| \ll
\sum_j|x_j|$. Thus we arrive at
\begin{equation}\label{Dest1}
D^r_\x(\k,\ch;it,i\mu) \ll_{F,I,r} \biggl(\log \bigl( 2 +
\sum_j|t+\mu_j| \bigr) \biggr)^7.
\end{equation}

\subsubsection{Delta term}The right hand side in \eqref{sf} arises
from geometrical considerations. For the purpose of this paper the
first term is the principal one.

\subsubsection{Bessel transformation}The last term in \eqref{sf}
contains a \emph{Bessel transform} of the test function. It depends
on $r\in \Ocal'\setminus\{0\}$ by $\sg = (\sign r_j) _j \in
\{1,-1\}^d$. Here we view elements of $F$ as elements of $\RR^d
=\prod_j F_j$. The Bessel transformation has a product form:
\begin{align}\label{bt}
\B^\sg_\x\ph(t) &= \prod_j \B^{\sg_j}_{\x_j}\ph_j(t_j),\\
\nonumber \B_0^\eta\ph(t) &\isdef -\txtfrac i2 \int_{\re\nu=0}
\ph(\nu) \frac{J_{2\nu}(|t|)-J_{-2\nu}(|t|)}{\cos\p\nu}\, \nu\, d\nu
\displaybreak[0]
\\
\nonumber &\qquad\hbox{} + \sum_{b\geq 2,\, b\equiv0\, \bmod2}
(-1)^{b/2}(b-1) \ph\left(\txtfrac{b-1}2\right) J_{b-1}(|t|),
\displaybreak[0]
\\
\nonumber \B_1^\eta\ph(t) &\isdef -\txtfrac{\eta}2 \sign(t)
\int_{\re\nu=0} \ph(\nu)
\frac{J_{2\nu}(|t|)+J_{-2\nu}(|t|)}{\sin\p\nu} \,\nu\, d\nu
\displaybreak[0]
\\
\nonumber &\qquad\hbox{} - i\eta \sign(t) \sum_{b\geq3, \, b\equiv1\,
\bmod2} (-1)^{(b-1)/2} (b-1)
\ph\left(\txtfrac{b-1}2\right) J_{b-1}(|t|).
\end{align}

These Bessel transforms converge absolutely for any test function, and
provide us with functions $f=\B^\sg_\x\ph$ on~$\left(
\RR^\ast\right)^d$ that satisfy
\begin{equation}\label{Bfest}
f(t) \ll \prod_j \min \left( |t_j|^{2\tau},1\right).
\end{equation}

\subsubsection{Sum of Kloosterman sums} The Kloosterman sums for the
present situation are
\begin{equation}\label{klsdef}
\kls\ch(r',r;c) = \summ\ast{a\, \bmod(c)} \ch(a) e^{2\pi
 i\mathrm{Tr}_{F/\QQ}((ra+r'\tilde a)/c)},
\end{equation}
with $r,r'\in \Ocal'$ and $c\in I \setminus\{0\}$. This is well
defined if $\ch$ is a character of $\left( \Ocal/(c)
\right)^\ast$, in particular for the character $\ch$ of $\left(
\Ocal/I \right)^\ast$ if $c\in I$. For each $a\in \Ocal$ that is
invertible modulo $(c)$, an element $\tilde a$ is picked such that
$a\tilde a\equiv 1\bmod (c)$.

For functions $f$ satisfying \eqref{Bfest} the sum
\begin{equation}\label{Kdef}
\K^r_\ch (f) \isdef \sum_{c\in I\setminus\{0\}} |N(c)|^{-1}
\kls\ch(r,r;c) f\left(\txtfrac{4\p|r|}c\right)
\end{equation}
converges absolutely. This holds in particular for $f=\B^\sg_\x \ph$.
By $\frac {4\p|r|}c$ we mean the element of $\left(
\RR^\ast\right)^d$ given by the $d$ embeddings $F\rightarrow\RR$ of
$r$ and $c$.

A trivial estimate of the Kloosterman sums is
$\left|\kls\ch(r',r;c)\right| \leq |N(c)|$. As discussed in \S2.4 of
\cite{BM6+}, there is a Weil type estimate. If the ideal $(c)$ has a
representation $(c) = \prod_\prm \prm^{v_\prm(c)}$ in prime ideals,
then
\begin{equation}\label{We}
\left|\kls\ch(r',r;c)\right| \ll_\dt \left|N(rr')\right|^{1/2} \;
\prod_{\prm \dividesnot I} \left( N\prm\right)^{v_\prm(c)/2+\dt} \;
\prod_{\prm\divides I} \left( N\prm\right)^{v_\prm(c)+\dt}
\end{equation}
for each $\dt>0$. This is not the best possible estimate, however it
is reasonably simple and will do for our purpose.

\subsection{Application of the sum formula}\label{sect-appl-sf}
In view of the term $\nct(\ph)$ in the sum formula, we want to choose
the test function $\ph$ such that it approximates the characteristic
function of a compact set $\tilde \Om$ in the space $Y_\x$, in which
the spectral parameters $\nu_\varpi $ take their values. In this
paper we first choose a test function approximating the delta
distribution at $q\in \prod_{j\in Q} Y_{\x_j}$ for a non-empty
subset~$Q$ of real places. At the other archimedean places we leave
$\ph_j$ free for the moment in the space of local test functions. The
local factor $\ph_j$ for $j\in Q$ such that $q_j =\frac{b-1}2$,
$b\cong \x_j\bmod 2$, $b\geq 2$ can be chosen such that practically
$\ph_j$ is the delta distribution at $\ph_j$. For
$q_j \in \left[0, \nubs\right)\cup i[0,\infty)$ the choice is more
delicate. It does not suffice that $\ph_j$ approximates the delta
distribution at~$q_j$. The terms $\ectr(\ph)$ and
$\K^r(\B^\sg_\x \ph)$ should have good estimates. Under the
additional assumption that $q_j \not\in
(0,\nubs]\cup i\, [0,1]$, the choice that worked best is a sharp
Gaussian function, similar to, but slightly simpler, than the test
function used in, for instance, \cite{IvJu} and~\cite{JuMo}. For
$q_j\in (0,\nubs]\cup i\,[0,1)$ we have not found a test function
that works well.

\begin{lem}\label{lem-sf-q}
Let $\{1,\ldots,d\} = E \sqcup Q_+ \sqcup Q_-$ with $Q\isdef Q_+ \cup
Q_- \neq \emptyset$. For $q \in \left( \RR \setminus \left(
-\frac12,\frac12\right) \right)^{Q_-} \times \left( i\RR \setminus
i(-1,1) \right)^{Q_+}$ put $\ph(q,\nu) = \prod_ j \ph(q_j,\nu_j)$
where $\ph_j(q,\cdot)$ is an arbitrary local test function satisfying
the conditions in \S\ref{sect-tf} if $j\in E$, and where for
$j\in Q$:
\begin{equation}
\label{phi-q-def}
\renewcommand\arraystretch{1.5}
\begin{array}{|c|l|}\hline
   &\multicolumn{1}{|c|}{\ph_j(q,\nu)}
\\ \hline
j\in Q_+& \begin{cases}\sqrt{\frac {U}\p} \left( e^{U(q-\nu)^2} +
e^{U(q+\nu)^2} \right)&\text{ if }|\re\nu|\leq \tau\\
0&\text{ elsewhere}
\end{cases}
\\ \hline
j\in Q_- & \begin{cases}1&\text{ if }\nu =q\text{ or }-q\\
0&\text{ elsewhere}
\end{cases}
\\ \hline
\end{array}
\end{equation}
Then there are constants $t_0>0$, $ \r\in
(1-\nobreak\tau,1)$ such that for $U\geq 1$ and $A>2$
\begin{align}\label{q-form}
\nct\bigl( \ph(q&,\cdot) \bigr) = \Dtfct 
\npl\left( \ph(q,\cdot)
\right)\\
\nonumber &\quad\hbox{} + \oh_{F,I,r,t_0,t_1,A}\biggl( N_E(\ph_E)
e^{t_0 U |Q_+|} \prod_{j\in Q_+} |q_j|^\r \prod_{j\in Q_-} |q_j|^{-A}
\biggr),
\end{align}
where $\ph_E = \bigotimes _{j\in E}\ph_j$ and
\begin{align}\label{NE-def}
N_E(\ph_E) &= \prod_{j\in E} N_j(\ph_j),\\
\nonumber N_j(\ph_j)&= \sup_{\nu,\, 0\leq \re\nu \leq \tau}
|\ph(\nu)|(1+|\nu|)^a + \sum_{b\equiv \x_j(2),\, b\geq 2} b^a\;
\left|\ph_j\left(\txtfrac{b-2}2\right)\right|.
\end{align}
\end{lem}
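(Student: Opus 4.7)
The plan is to substitute $\ph(q,\cdot)$ into the sum formula of Theorem~\ref{thm-sf} and show that, among the non-Plancherel terms, both $\ectr(\ph)$ and $\K^r_\ch(\B^\sg_\x\ph)$ fit into the error budget stated in~\eqref{q-form}. First I would verify that $\ph(q,\cdot) \in T_\x(\tau,a)$: each Gaussian factor for $j \in Q_+$ is entire, even, and of rapid decay on the closed strip $|\re\nu|\leq\tau$, meeting (T1)--(T3); for $j \in Q_-$ the constraint $|q_j|\geq\frac12>\tau$ puts $\pm q_j$ off the strip, so $\ph_j \equiv 0$ on $|\re z|\leq\tau$, and the admissible values of $q_j$ are half-integers matching the parity of $\x_j$.

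The Eisenstein contribution is handled factor by factor in the product $\ph(it+i\mu)=\prod_j\ph_j(it+i\mu_j)$. If $Q_-\neq\emptyset$, then for any $j\in Q_-$ the argument $it+i\mu_j$ is purely imaginary while $\ph_j$ is supported on the real axis, so the full integrand vanishes and $\ectr(\ph)=0$. If $Q_-=\emptyset$, I would plug \eqref{Dest1} into \eqref{ectrdef} and exploit the Gaussian concentration of $\ph_j(it+i\mu_j)$ near $t+\mu_j=\pm|q_j|$ for each $j\in Q_+$; the resulting $t$-integral has $\ell^1$ mass $\ll 1$, so the only cost is the polylogarithmic loss from $D^r_\x$ times the polynomial decay encoded in $N_E(\ph_E)$, which is absorbed easily by the budget $N_E(\ph_E)\,e^{t_0 U|Q_+|}\prod_{j\in Q_+}|q_j|^\rho$.

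For the Kloosterman term I would exploit the product structure \eqref{bt} of $\B^\sg_\x\ph$ and bound $\B^{\sg_j}_{\x_j}\ph_j$ place by place. For $j\in Q_-$ the Bessel transform collapses to a single discrete-series contribution, essentially $\pm J_{2q_j}(|t_j|)$, and the standard bound $J_{b-1}(x)\ll\min\bigl((x/2)^{b-1}/\G(b),\,x^{-1/2}\bigr)$ produces $\ll|q_j|^{-A}$ for any $A>2$ on the values of $|t_j|=|4\pi r_j/c_j|$ entering \eqref{Kdef}. For $j\in Q_+$ I would deform the spectral contour in \eqref{bt} from $\re\nu=0$ to $\re\nu=\tau$, where $|\ph_j(q_j,\nu)|$ grows only up to $\sqrt{U/\pi}\,e^{U\tau^2}$, and invoke uniform asymptotics of $J_{2\nu}(x)/\cos\pi\nu$ as $|\im\nu|\to\infty$ with $\re\nu=\tau$. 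This yields a factor-level bound $|\B^{\sg_j}_{\x_j}\ph_j(t_j)|\ll e^{t_0 U}|q_j|^\rho\min(|t_j|^{2\tau},\,1)$ for some $\rho\in(1-\tau,1)$ and a universal $t_0$. Substituting these into \eqref{Kdef} and using Weil's estimate \eqref{We}, the sum over $c\in I\setminus\{0\}$ converges because $\rho<1$, and the total is comfortably within the right-hand side of \eqref{q-form}.

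The crux of the whole argument is this Bessel-transform estimate for the sharp Gaussian at places in $Q_+$. Extracting the exponent $\rho<1$ with the prescribed $e^{t_0 U}$ factor requires careful tracking of the shifted-contour integral using uniform asymptotics of $J_{2\nu}$ both in the pre-transition regime $x\lesssim|\nu|$ and in the oscillatory regime $x\gtrsim|\nu|$; the separations $|q_j|\geq 1$ for $j\in Q_+$ and $|q_j|\geq\frac12$ for $j\in Q_-$ are precisely what keeps $\nu$ away from the dangerous small-parameter zone where any such bound would degrade. Once that factor-level estimate is established, assembling~\eqref{q-form} from the sum formula is routine.
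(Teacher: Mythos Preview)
Your overall strategy matches the paper's: invoke the sum formula, observe that $\ectr$ vanishes when $Q_-\neq\emptyset$, bound the Bessel transform factor by factor (contour shift to $\re\nu=\tau$ for $j\in Q_+$, single $J_{b-1}$ term for $j\in Q_-$), and control the Kloosterman term via Weil's bound.

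The substantive gap is in the Kloosterman step. The sum \eqref{Kdef} runs over \emph{elements} $c\in I\setminus\{0\}$, and since $\Ocal^\ast$ has rank $d-1$ (positive whenever $d\geq 2$), each principal ideal contributes infinitely many associates. The Kloosterman sum $\kls\ch(r,r;c)$ and $|N(c)|$ depend only on the ideal $(c)$, so convergence hinges on the unit sum $\sum_{\zeta\in\Ocal^\ast}\prod_j\min\bigl(p_j|\zeta_j|^{-2\tau},q_j\bigr)$, which the paper handles by invoking Lemma~2.2 of~\cite{BM6+}. That lemma converts the place-wise minima into a global $\min(\mathbf{P}|N(c)|^{-2\tau},\mathbf{Q})$, after which the ideal sum converges because Weil's $|N(c)|^{1/2+\dt}$ is beaten by $|N(c)|^{-2\tau}$ with $2\tau>\tfrac12$. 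Your stated reason ``converges because $\rho<1$'' is not the mechanism: $\rho$ tracks the $|q_j|$-exponent in the final bound and plays no role in the convergence of the $c$-sum.

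Two smaller points. Your claimed uniform bound $\ll e^{t_0U}|q_j|^\rho\min(|t_j|^{2\tau},1)$ for $j\in Q_+$ is too strong on the large-$|t_j|$ side: the paper only obtains $\ll|q_j|$ there (via $J_{2\nu}(|t|)\ll e^{\pi|\im\nu|}$ on $\re\nu=0$), and it is precisely the unit-sum lemma applied to the asymmetric pair $\bigl(e^{U\tau^2}|q_j|^{\rho_1}|t_j|^{2\tau},\,|q_j|\bigr)$ that produces the shape $e^{t_0U|Q_+|}\prod_{j\in Q_+}|q_j|^\rho$. Also, when $Q_-=\emptyset$ the Eisenstein term \eqref{ectrdef} carries a lattice sum over $\mu\in\Ld_{\k,\ch}$ in addition to the $t$-integral; saying the $t$-integral has mass $\ll 1$ does not by itself control the $\mu$-sum, and the paper compares the combined sum-integral to $\int_{\RR^d}$, picking up a factor $U^{1/2}$ from the gradient of the Gaussian, which is then absorbed into the error budget.
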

Note that $\ph_j$ chosen in \eqref{phi-q-def} is a local test function
for any choice of the parameters $a>2$ and $\tau\in \left(
\frac14,\frac12\right)$.

By Theorem~\ref{thm-sf} it suffices to estimate $\ectr(\ph(q,\cdot))$
and $\K^r_\ch(\B^\sg_\x\ph(q,\cdot))$ by the error term
in~\eqref{q-form}. This we carry out in the remainder of this
subsection.

\subsubsection{Bessel transforms} For the factors $j\in E$ we cannot
do better than apply Lemma 3.12 in \cite{BM6+}. This gives
\begin{equation}
\B^{\sg_j,\sg_j'}_{\x_j} \ph_j(t) \ll_E \min \left(
|t|^{2\tau},1\right).
\end{equation}
The subscript $E$ in $\ll_E$, $\oh_E$ and $o_E$ indicates not only
dependence on the choice of the set $E$, but also on the choice of
the test function $\ph_E\isdef \bigotimes_{j\in E}\ph_j$. Here and in
the sequel, this dependence goes via the factor $N_E(\ph_E)$ in
\eqref{NE-def}.

For $j\in Q_-$ we first consider $y \leq 2\sqrt n$, with $n\in \NN$.
Then $\left|J_n(y)\right| \leq \frac{(y/2)^n
e^{\frac14y^2}}{n!} \ll y^{2\tau} \frac{ n^{\frac12 n-\tau}
e^n}{(n+1)^{n+\frac12} e^{-n}} \ll y^{2\tau} \left( \frac{n e^4
}{(n+1)^2} \right)^{\frac12 n} n^{-\frac12-\tau} \ll_b \frac
{y^{2\tau}}{n^b} $ for each $b>0$. For $y \geq \sqrt n$:
$\left|J_n(y)\right|\leq 1$. Hence, for $j\in Q_-$ and $\pm q_j \in
\frac{\x_j-1}2 +\NN$:
\begin{equation}\label{BestQ-}
\B^{\sg_j}_{\x_j}\ph_j(q;t) \ll_A \min \left(
|q|^{-A_1}|t|^{2\tau},|q|\right)\qquad\text{ for each }A_1>0.
\end{equation}
If $|q_j|\not\in \frac{\x_j-1}2 +\NN$, then
$\B^{\sg_j,\sg_j}_{\x_j}\ph_j(t) =0$.

The case $j\in Q_+$ takes more work. The function $\ph_j(q,\nu)$ is
non-zero only for $|\re\nu|\leq \tau$. We need an estimate like
\eqref{BestQ-}, in which the dependence on $q$ is explicit.

We may use (3.64) in \cite{BM6+}:
\begin{equation}\label{Btauint}
\B^{\sg_j}_{\x_j} \ph_j(q;t) = -i \left( i\sg_j \sign t \right)^{\x_j}
\int_{\re\nu=\tau} \ph_j(q,\nu) \frac{\nu J_{2\nu}(|t|)}{\cos\p\left(
\nu-\frac{\x_j}2\right)}\, d\nu.
\end{equation}

We proceed as in \S4.2 of \cite{BMP3a}, and apply the integral
representation in (41) of {\sl loc.\ cit.}, with $\al=\re\nu=\tau$
and $\g\in \left( \tau,\frac12\right)$. As in \cite{BMP3a} p.~700,
this leads to the estimate for all $|t|>0$:
\begin{equation}\label{Jai}
J_{2\nu}(|t|) \ll |t|^{2\tau} e^{\p|\im\nu|}
\left(1+|\im\nu|\right)^{\frac12-\g-\tau}.
\end{equation}

We obtain for $q\in i\RR$:
\begin{align}
\B^{\sg_j}_{\x_j}&\ph_j(q;t) \ll U^{1/2} \int_{-\infty}^\infty
\sum_\pm e^{U\re(ix+\tau\pm q)^2} |t|^{2\tau} \left|\tau+ix\right|
\displaybreak[0]
\\
\nonumber &\qquad\qquad\hbox{} \cdot
\left( 1+|x|\right)^{\frac12-\tau-\g}\, dx\displaybreak[0]\\
\nonumber &\ll |t|^{2\tau} U^{1/2} \int_{-\infty}^\infty
e^{U\tau^2-U(x-|q|)^2} \left( 1+|x|\right)^{\frac 32-\g-\tau}\, dx
\displaybreak[0]\\
\nonumber &\ll |t|^{2\tau} e^{U\tau^2} \int_0^\infty e^{-x^2} \left(
1+|q|+\txtfrac x{\sqrt U} \right) ^{\frac32-\g-\tau}\, dx
\displaybreak[0]\\
\nonumber &\ll |t|^{2\tau} e^{U\tau^2} \int_0^\infty e^{-x^2} \biggl(
\left( 1+|q|\right)^{\frac32-\g-\tau} + \bigl(x U^{-1/2}
\bigr)^{\frac 32-\g-\tau} \biggr)\, dx
\displaybreak[0]\\
\nonumber &\ll |t|^{2\tau} e^{U\tau^2} \biggl( (1+|q|)^{\frac
32-\g-\tau} + U^{-\frac34+\frac\g2+\frac
\tau2}\biggr)\displaybreak[0]\\
\nonumber &\ll |t|^{2\tau} e^{U\tau^2} (1+|q|)^{\r_1},
\end{align}
where $\r_1 = \frac 32-\g-\tau$; so $\r_1\in \left(
1-\tau,\frac32-2\tau\right) \subset\left( \frac12,1\right)$.

The factor $e^{U\tau^2}$needs attention since it becomes large for
large values of~$U$. We carry along this factor, and will compensate
for it later.
\medskip

These estimates are good for small values of $|t|$. For large $|t|$,
we use
\begin{equation}\label{Je1}
J_{2\nu}(|t|) \ll e^{\p|\im\nu|}\qquad \text{ for }\re\nu=0;
\end{equation}
see (5.44) in \cite{BM6+}, with $\s=0$.
\begin{align*}
\B^{\sg_j}_{\x_j}\ph_j(q,t) &\ll U ^{1/2} \int_{\re\nu=0} \left| e^{U
\left( \nu+q\right)^2} + e^{U \left( \nu-q\right)^2} \right| |\nu|\,
|d\nu| \displaybreak[0]
\\
\nonumber &\ll \int_{-\infty}^\infty e^{-x^2} \left| x U ^{-1/2}+
|q|\right|\, dx \ll 1+|q| \ll |q|.
\end{align*}

Thus we have
\begin{equation}
\B^\sg_\x\ph(q;t) \ll_{A_1} \prod_j \left( a_j
|t_j|^{2\tau},b_j\right),
\end{equation}
with
\begin{equation}\label{ab-tab}
\renewcommand\arraystretch{1.4}
\begin{array}{|c|c|c|}\hline
a_j&b_j&\\ \hline N_j(\ph_j)&N_j(\ph_j)&j\in E\\ \hline
|q|^{-A_1}&|q|&j\in Q_-\\ \hline e^{\frac12\tau^2 U} |q|^{\r_1} & |q|&
j\in Q_+\\ \hline
\end{array}
\end{equation}

\subsubsection{Kloosterman term} We estimate the sum of Kloosterman
sums by the sum of the absolute values of the terms:
\begin{equation}\label{sKls-e}
\K^{r,r}_\ch\left( \B^{\sg,\sg}_\x\ph(q;\cdot)\right) \ll \sum_{c\in
I\setminus\{0\}} \frac{\left|\kls\ch(r,r;c)\right|}{|N(c)|} \prod_j
\min \biggl( a_j \biggl(
\txtfrac{4\p|r_jr_j'|^{1/2}}{|c_j|}\biggr)^{2\tau},b_j \biggr),
\end{equation}
with $a_j$ and $b_j$ as in~\eqref{ab-tab}.

For the Kloosterman sums, we use the Weil bound, as stated in
\eqref{We}. This bound depends only on the ideal $(c)$, so we
decompose the sum as
\begin{align*}
&\ll_{r,\dt} \summ'{(c)\subset I} \prod_{\prm\dividesnot I}
N\prm^{v_\prm(c)(\dt-1/2)} \prod_{\prm\divides I} N\prm^{v_\prm(c)\dt
}\\
&\qquad\hbox{} \cdot \sum_{\z\in \Ocal^\ast} \prod_j \min \left( p_j
|\z_j|^{-2\tau}, q_j \right)\,,\\
&\text{with }\quad p_j=a_j (4\pi|r_j|)^{2\tau} ,\quad q_j=b_j,\quad
(c) = \prod_{\prm\text{ prime}} \prm^{v_\prm(c)}.
\end{align*}
The prime denotes that the zero ideal is excluded. We can take $\dt>0$
as small as we want.

We apply Lemma 2.2 in \cite{BM6+} with $\al=2\tau$, $\bt=0$ and $y_j =
c_j^{-1}$. Thus, we estimate the sum over $\z\in \Ocal^\ast$ by:
\begin{align*}
&\ll \biggl( 1+ \bigl| \log|N(c)| + \frac1{2\tau} \log\frac{\mathbf
Q}{\mathbf P} \bigr|\biggr)^{d-1} \min \left( \mathbf P
|N(c)|^{-2\tau},\mathbf Q\right),
\displaybreak[0]\\
\mathbf P &= \prod_j p_j = (4\pi)^{2\tau d} |N(r)|^{2\tau}
e^{\frac12\tau^2 U|Q_+|} N_E(\ph_E) \prod_{j\in Q_+} |q_j|^{\r_1}
\prod_{j\in Q_-}|q_j|^{-A_1},
\displaybreak[0]\\
\mathbf Q &= \prod_j q_j = N_E(\ph_E)
\prod_{j\in Q} |q_j|,
\displaybreak[0]\\
\frac{\mathbf P}{\mathbf Q} &= (4\pi)^{2\tau d}e^{\frac12\tau^2 U
|Q_+|} |N(r)|^{2\tau} \prod_{j\in Q_+} |q_j|^{\r_1-1}\prod_{j\in Q_-}
|q_j|^{-A_1-1}.
\end{align*}

We have already a small quantity $\dt$. We employ it also for the
logarithms. We use that for $c\in \Ocal\setminus\{0\}$ the $|N(c)|$
stay away from zero:
\begin{align*}
\left|\log|N(c)|\vphantom{\log\frac{\mathbf Q}{\mathbf P}}
\right.&\left.+\txtfrac1{2\tau} \log\txtfrac{\mathbf Q}{\mathbf
P}\right| \leq \left| \log |N(c)|\right| + \txtfrac1{2\tau}
\bigl|\log \txtfrac{\mathbf Q}{\mathbf P} \bigr|\\
&\ll_\dt |N(c)|^{\dt/(d-1)} + \max \biggl( \bigl(\txtfrac{\mathbf
Q}{\mathbf P}\bigr)^{\dt/(d-1)} , \bigl(\txtfrac{\mathbf P}{\mathbf
Q}\bigr)^{\dt/(d-1)} \biggr)\,,
\displaybreak[0]
\\
\left|\log|N(c)|\vphantom{\log\txtfrac{\mathbf Q}{\mathbf P}}
\right.&\left.+\txtfrac1{2\tau} \log\txtfrac{\mathbf Q}{\mathbf
P}\right|^{d-1} \ll |N(c)|^\dt + \max \left( \left(\txtfrac{\mathbf
Q}{\mathbf P}\right)^\dt , \left(\txtfrac{\mathbf P}{\mathbf
Q}\right)^\dt \right),\\
\biggl(1+\bigl|\log|N(c)| & +\txtfrac1{2\tau} \log\txtfrac{\mathbf
Q}{\mathbf P}\bigr| \biggr)
\min \left( \mathbf P |N(c)|^{-2\tau}, \mathbf Q\right)\\
&\ll_{r} e^{\frac12\tau^2 U|Q_+|(1+\dt)} N_E(\ph_E)
|N(c)|^{-2\tau(1-\dt)}\\
&\qquad\hbox{} \cdot \prod_{j\in Q_+}|q_j|^{\r+(1-\r_1)\dt }
\prod_{j\in Q_-} |q_j|^{-A_1+(A_1-1)\dt }\,.
\end{align*}
We have assumed that the first factor in the minimum is the essential
one for our purpose.

Let us define: $t_0= \frac12\tau^2(1+\nobreak \dt)$,
$\tau_1=\tau(1+\nobreak \dt)$, $\r = \r_1+(1-\nobreak \r_1)\dt$, and
$A=A_1+(1-A_1)\dt$. We take $A_1=-3$, and $\dt>0$ sufficiently small
such that $\frac14<\tau_1<\frac12$, $1-\tau<\r<1$, $A>2$. We find the
following estimate for the sum of Kloosterman sums in~\eqref{sKls-e}:
\begin{align*}
&\ll_{r,\dt} e^{t_0 U|Q_+| } N_E(\ph_E)
\summ \prime{(c)\subset I} \prod_{j\in Q_+}|q_j|^\r \\
&\qquad\hbox{} \cdot \prod_{j\in Q_-} |q_j|^{-A}
\prod_{\prm\dividesnot I} N\prm^{v_\prm(c)(\dt-1/2 -2\tau_1) }
\prod_{\prm \divides I} N\prm^{v_\prm(c)(\dt
-2\tau_1) } \\
&\ll e^{t_0 U|Q_+| } N_E(\ph_E) \prod_{j\in Q_+}|q_j|^\r \prod_{j\in
Q_-} |q_j|^{-A}\\
&\qquad\hbox{} \cdot \prod_{\prm\dividesnot I} \frac
1{1-N\prm^{\dt-1/2-2\tau_1}} \prod_{\prm\divides
I}\frac1{1-N\prm^{\dt-2\tau_1}}.
\end{align*}
Under the additional assumption on $\dt$ that $2\tau_1+\frac12-\dt>1$,
the product converges, and we have obtained:
\begin{equation*}
\K^{r,r}_\ch\left( \B^{\sg,\sg}_\x\ph(q;\cdot)\right)
\ll_{F,I,r,\dt} e^{t_0 U|Q_+| } N_E(\ph_E) \prod_{j\in Q_+}|q_j|^\r
\prod_{j\in Q_-} |q_j|^{-A},
\end{equation*}
with the size of the error term in~\eqref{q-form}.

This is the main place where the dependence on the ideal
$I\subset\Ocal$ determining $\G=\G_0(I) \subset \SL_2(\Ocal)$ enters
the estimates. We leave this dependence implicit.

\subsubsection{Eisenstein term}We still have to estimate
$\ectr(\ph(q,\cdot))$. The definition in \eqref{ectrdef} shows that
$\ectr(\ph(q,\cdot))=0$ if $Q_-=0$.

For $j\in E$, we have $\ph_j(\nu)\ll_E \left( 1+|\nu|^2\right)^{-a}$.
In view of \eqref{Dest1} it suffices to estimate
\begin{align*}
\sum_{\k\in \Pcal_\ch} & \sum_{\mu \in \Ld_{\k,\chi}}
\int_{-\infty}^\infty |N(r)|^{2\dt} N_E(\ph_E) l(q,t+\mu)\, dt,
\mbox{ with }
\displaybreak[0]\\
\nonumber l(q,x) &= \prod_{j\in E} \left( 1+x_j^2\right)^{\dt-a/2}
\displaybreak[0]\\
\nonumber &\qquad\hbox{} \cdot \prod_{j\in Q_+} U^{1/2} \left(
1+x_j^2\right)^\dt \left( e^{-U(x_j-|q_j|)^2} + e^{-U(x_j+|q_j|)^2}
\right).
\end{align*}
In \S5.3 of \cite{BMP3a}, we have replaced the sum over $\mu\in
\Ld_{\k,1}$ by an integral over the hyperplane $\sum_{j=1}^d x_j=0$.
Since in \cite{BMP3a} the quantity corresponding to $U$ went down to
zero, there was no problem there. Here $U$ may be large, and we have
to take a closer look at the relation between the sum and the
integral.

The integral gives a contribution, under the assumption $\frac
a2-\dt>1$:
\begin{align}\label{lint}
\int_{\RR^d} &l(q,x)\, dx \ll \prod_{j\in E} \int_{-\infty}^\infty
(1+x^2)^{\dt-a/2}\, dx \displaybreak[0]
\\ \nonumber&\qquad\hbox{} \cdot
\prod_{j\in Q_+} U^{1/2} \int_{-\infty}^\infty (1+x^2)^\dt
e^{-U(x-|q|)^2}\, dx
\displaybreak[0]\\
\nonumber &\ll \prod_{j\in E} 1\; \prod_{j\in Q_+}
\int_{-\infty}^\infty \left( 1+ |q_j|^2 + U^{-1} x^2\right)^\dt
e^{-x^2}\, dx
\displaybreak[0]\\
\nonumber &\ll \prod_{j\in Q_+} \left( 1+|q_j|\right)^{2\dt}
\left(1+\txtfrac 1U\right) \ll \prod_{j\in Q_+} |q_j|^{2\dt}.
\end{align}

The difference between the value at $\mu\in \Ld_{\k,\ch}$ and the
integral over $\mu+V$, where $V$ is a compact neighborhood of $0$,
produces an error estimated by the gradient of $l(q,\cdot)$.
\begin{align*}
\frac{\partial}{\partial x_j} l(q,x) &\ll_{a,\dt}
l(q,x)\cdot\begin{cases}
\frac{|x_j|}{1+x_j^2} &\text{ if }j\in E,\\
\left( \frac{|x_j|}{1+x_j^2} +U\left|x_j-|q_j|\right|\right)
&\text{ if }j\in Q_+.
\end{cases}
\end{align*}
The difference between the sum and the integral is estimated by
\begin{equation*}
\int_{\RR^d} \biggl( \sum_{m\in E} \oh(1) + \sum_{m\in Q_+} \left(
1+U\left|x_m-|q|_m\right| \right) \biggr) l(q,x)\, dx.
\end{equation*}
The terms with $m\in E$ can be estimated as in~\eqref{lint}. For a
term with $m\in Q_+$:
\begin{align*}
&\ll \int_{-\infty}^\infty \left( 1+U\left|x_m-|q|_m\right| \right)
U^{1/2}(1+x_m^2) e^{-U(x-|q_m|)^2}\, dx
   \\ \nonumber &\qquad\hbox{} \cdot
\prod_{j\in E} \oh(1) \prod_{j\in Q_+\setminus\{m\}} |q_j|^{2\dt}
\displaybreak[0]\\
\nonumber
&\ll \prod_{j\in Q_+\setminus\{m\}} |q_j|^{2\dt} \\
\nonumber &\qquad\hbox{} \cdot \int_{-\infty}^\infty
\left(1+U^{1/2}|x|\right) \left( 1+|q_m|^2 + U^{-1} x^2\right)^\dt
e^{-x^2}\, dx
\displaybreak[0]\\
\nonumber &\ll U^{1/2} \prod_{j\in Q_+}|q_j|^{2\dt}\,.
\end{align*}

Thus, we obtain:
\begin{equation}\label{Eisestq}
\ectr \left(\ph(q,\cdot)\right) \ll_{F,I,r,\dt} N_E(\ph_E) U^{1/2}
\prod_{j\in Q_+} |q_j|^{2\dt}\,.
\end{equation}
Since $\dt>0$ can be as small as we desire, this bound is easily
absorbed into the error term in~\eqref{q-form}.

\subsubsection{Delta term}
\begin{lem}\label{lem-npl-comp}
Let $\nplf_j$ as in \eqref{nplf-def}. For $E$, $Q_+$, $Q_-$ and
$\ph(q,\cdot)$ as in Lem\-ma~\ref{lem-sf-q}:
\begin{align}\label{npl-comp}
\npl_\x(\ph(q,\cdot)) &= \prod_{j\in E} \npl_{\x_j}(\ph_j)
\prod_{j\in Q} 2\nplf_j(q_j) \\
\nonumber &\qquad\hbox{} + \begin{cases} U^{-1/2}N_E(\ph_E)
|Q_+|\frac{\prod_{j\in Q}{|q_j|}}{\min_{j\in Q_+} |q_j|}&\text{ if
}Q_+\neq\emptyset,\\
0&\text{ if }Q_+=\emptyset.
\end{cases}
\end{align}
\end{lem}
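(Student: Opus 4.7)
The plan is to start from the product decomposition $\npl_\x = \bigotimes_j \npl_{\x_j}$, which together with the product form of $\ph(q, \cdot)$ gives
\begin{equation*}
\npl_\x(\ph(q, \cdot)) = \prod_j \npl_{\x_j}(\ph_j(q, \cdot)),
\end{equation*}
reducing the problem to evaluating a one-dimensional factor for each $j$. For $j \in E$ the factor is $\npl_{\x_j}(\ph_j)$, bounded by $\ll N_j(\ph_j)$ using \eqref{NE-def} together with $a > 2$, and enters the main term directly. For $j \in Q_-$, the local test function is supported on $\{\pm q_j\}$ with value $1$; only the discrete part of $\npl_{\x_j}$ contributes, so the factor is exactly $2 \nplf_j(q_j)$ (both sides being zero unless $|q_j| \in \frac{\x_j + 1}{2} + \NN_0$).

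The essential calculation is for $j \in Q_+$. Since $\tau < \frac12$, the Gaussian vanishes at every discrete point $\frac{b-1}{2}$, $b \geq 2$, so only the continuous part of $\npl_{\x_j}$ contributes. Writing $q_j = is$ with $s = |q_j| \geq 1$, one has $(q_j \mp it)^2 = -(s \mp t)^2$, so
\begin{equation*}
\npl_{\x_j}(\ph_j(q, \cdot)) = 2 \sqrt{U/\pi} \int_0^\infty \bigl( e^{-U(s - t)^2} + e^{-U(s + t)^2} \bigr) \nplf_j(t) \, dt.
\end{equation*}
I would bound the $e^{-U(s+t)^2}$-summand by $\oh(e^{-U})$ using $s \geq 1$, and attack the dominant summand by substituting $u = \sqrt U(t - s)$ and Taylor-expanding $\nplf_j$ around $s$; because $\nplf_j(t)$ equals $t \tanh \pi t$ or $t \coth \pi t$ on the relevant region, with derivative uniformly bounded on $[\frac12, \infty)$, the remainder is $\oh(U^{-1/2})$ with an absolute constant, yielding
\begin{equation*}
\npl_{\x_j}(\ph_j(q, \cdot)) = 2 \nplf_j(q_j) + \oh(U^{-1/2}).
\end{equation*}

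To conclude I would set $M_j = 2 \nplf_j(q_j)$ and, for $j \in Q_+$, $R_j = \oh(U^{-1/2})$, so that $\npl_{\x_j}(\ph_j(q, \cdot)) = M_j + R_j$ on $Q_+$ and $= M_j$ on $Q_-$, with $|M_j| \ll |q_j|$ and $|R_j|$ bounded by an absolute constant (hence $|R_j| \ll |M_j|$ since $|q_j| \geq 1$). The telescoping identity
\begin{equation*}
\prod_{j \in Q_+} (M_j + R_j) - \prod_{j \in Q_+} M_j = \sum_{m \in Q_+} R_m \prod_{j < m} (M_j + R_j) \prod_{j > m} M_j
\end{equation*}
then bounds the $Q_+$-difference by $\ll |Q_+| \cdot U^{-1/2} \cdot \prod_{j \in Q_+} |q_j| / \min_{j \in Q_+} |q_j|$. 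Multiplying by the $E$-factor $\prod_{j \in E} \npl_{\x_j}(\ph_j)$ (absorbed into $N_E(\ph_E)$) and the $Q_-$-factor $\prod_{j \in Q_-} 2|q_j|$ produces the advertised error. The main obstacle is the Gaussian concentration step for $j \in Q_+$, and specifically the uniformity of the $\oh(U^{-1/2})$ error in $|q_j| \geq 1$; once that is verified, everything else is product-rule bookkeeping.
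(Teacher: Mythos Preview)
Your proposal is correct and follows essentially the same route as the paper: compute each local factor, use Gaussian concentration for $j\in Q_+$ to get $\npl_{\x_j}(\ph_j(q,\cdot))=2\nplf_j(q_j)+\oh(U^{-1/2})$ uniformly in $|q_j|\geq 1$, and then telescope over the product. Your direct Taylor bound (using that the derivative of $t\mapsto\nplf_j(it)$ is uniformly bounded on all of $\RR$, not just on $[\tfrac12,\infty)$ as you wrote---this matters since the intermediate point $\theta$ can lie in $[0,\tfrac12)$) is in fact slightly cleaner than the paper's argument, which first combines the two Gaussians via evenness of $\nplf_j$ and then splits the integral at a threshold $b=\sqrt{\log|q_j|+\tfrac12\log U}$ before reaching the same $\oh(U^{-1/2})$ conclusion.
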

\begin{proof}Since
\[ \npl_{\x_j}(\ph_j(q,\cdot)) =
\begin{cases}
2|q| &\text{ if }2q \equiv \x_j-1\bmod 2,\\
0&\text{ otherwise},
\end{cases}
\]
we have to consider the discrepancy between
$\npl_{\x_j}(\ph_j(q,\cdot))$ and $2\nplf_j(q_j)$ for $j\in Q_+$.

The function $t\mapsto \nplf_j(it)$ is even and smooth on $\RR$. If
$\x_j=0$, then $\nplf_j(0)=0$, and if $\x_j=1$, then
$\nplf_j(0)=\frac1\p$. We have $\plf_j(it) \sim |t|$ as
$|t|\rightarrow\infty$, and $\frac d{dt} \plf_j(it) = \oh(1)$ for
$t\in \RR$.
\begin{align}
\npl_{\x_j}&\left(\ph_j(q_j,\cdot)\right) - 2\nplf_j(q_j)
\displaybreak[0]\\
\nonumber &= 2\sqrt{\frac U\pi} \int_0^\infty \left( e^{-(x-|q_j|)^2}
+ e^{-(x+|q|)^2} \right)
\nplf_j(ix)\, dx - 2\nplf_j(q_j)\displaybreak[0]\\
\nonumber &= 2\sqrt{\frac U\p} \int_{-\infty}^\infty e^{-U(x-|q_j|)^2}
\nplf_j(x)\, dx - 2\nplf_j(q_) \displaybreak[0]
\\
\nonumber &\quad= \frac2{\sqrt\p} \int_{-\infty}^\infty e^{-x^2}
\left( \nplf_j \left( q_j+ \txtfrac {ix}{\sqrt U} \right) -
\nplf_j\left( q_j\right) \right)\, dx.
\end{align}
We write $ \nplf_j \left( q_j+ \frac {ix}{\sqrt U} \right) -
\nplf_j\left( q_j\right) = U^{-1/2} x\;\frac d{d\th}\nplf_j(i\th)
$ for $|x|\leq b$, with $b\geq 1$, and $\th$ between $|q_j|$ and
$|q_j|+\frac x{\sqrt U}$.
\begin{align*}\npl_{\x_j}&\left(\ph_j(q_j,\cdot)\right)
- 2\nplf_j(q_j)
\\
&\ll \int_{-b}^b e^{-x^2} \oh(1) U^{-1/2} |x|\, dx + \int_{|x|\geq b}
e^{-x^2} \oh\left( |q_j| + |x| U^{-1/2} \right)\, dx
\displaybreak[0]\\
&\ll U^{-1/2} + |q_j| \frac{e^{-b^2}}b + U^{-1/2} e^{-b^2}.
\end{align*}
Here we have used that for $b\in \RR$ and $l\geq 0$:
\begin{equation}\label{Gint}
\int_b^\infty |x|^l e^{-x^2}\, dx\ll_{l}
\begin{cases}
1&\text{ if } b\leq 1,\\
b^{l-1} e^{-b^2} &\text{ if }b\geq 1;
\end{cases}
\end{equation}
which can be checked by partial integration and induction.

We assume that $U\geq e^2$, and choose $b = b(q, U) = \sqrt{ \log|q_j|
+\frac12\log U}$, which satisfies $b\geq 1$. This gives
\begin{equation*}
\npl_{\x_j}\left(\ph_j(q_j,\cdot)\right) - 2\nplf_j(-iq_j) \ll
U^{-1/2}.
\end{equation*}

Furthermore, we have
\begin{alignat}2 \label{npl-est}
\nplf_j(q_j) &\ll |q_j|&&(j\in Q_+),\displaybreak[0]\\
\nonumber \npl_{\x_j}\left( \ph_j(q_j,\cdot)\right) &\ll
U^{-1/2}+|q_j| \ll |q_j|&\qquad&(j\in Q_+),
\displaybreak[0]\\
\nonumber \nplf_{\x_j}\left( \ph_j(q_j,\cdot)\right) &\ll |q_j|&&(j\in
Q_-),
\displaybreak[0]\\
\nonumber \npl_{\x_j}(\ph_j) &\ll N_j(\ph_j)&&(j\in E).
\end{alignat}

These local estimates imply that
\begin{align*}
\prod_{j\in Q_+} \npl_{\x_j}&(\ph_j(q_j,\cdot)) - \prod_{j\in Q_+}
2\nplf_j(q_j)
\ll \sum_{m\in Q_+} U^{-1/2} \prod_{j\in Q_+,\, j\neq m} |q_j|\\
& \ll U^{-1/2} |Q_+| \frac{\prod_{j\in Q_+}|q_j|}{\min_{j\in
Q_+}|q_j|}\,.
\end{align*}
Hence we have shown the estimate in~\eqref{npl-comp}.
\end{proof}

We now fix $A>2$. {}From here on we view the quantities $t_0>0$ and
$\r\in
(1-\nobreak\tau,1)$ also as absolute quantities, like $\tau$ and $a$
in the sum formula. We apply Lemmas \ref{lem-sf-q}
and~\ref{lem-npl-comp}
to obtain:
\begin{prop}\label{prop-est1q}For $E$, $Q_+$, $Q_-$, $\ph(q,\cdot)$ as
in Lemma~\ref{lem-sf-q}, with $U\geq e^2$:
\begin{align}\label{est1q}
\nct &\left(\ph(q,\cdot)\right) = \frac{2^{|Q|+1}
\sqrt{|D_F|}}{(2\pi)^d} 
\npl_E(\ph_E) \prod_{j\in Q} \nplf_j(q_j)
\displaybreak[0]\\
\nonumber &\qquad\hbox{} + \oh_{F,I,r} \biggl( N_E(\ph_E)
e^{t_0U|Q_+|} \prod_{j\in Q_+} |q_j|^\r \prod_{j\in Q_-} |q_j| ^{-A}
\biggr)
\displaybreak[0]\\
\nonumber &\qquad\hbox{} + \oh \biggl( N_E(\ph_E) U^{-1/2} |Q_+|
\frac{\prod_{j\in Q} |q_j| }{\min _{j\in Q_+} |q_j|} \biggr).
\displaybreak[0]
\end{align}
\end{prop}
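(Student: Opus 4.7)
The plan is to assemble the proposition directly from the two preceding lemmas, which between them carry all the substantive analytic content; the remaining task is essentially bookkeeping on constants, so I do not expect any real obstacle. The hypothesis $U \geq e^2$ in the statement is stronger than what either lemma needs, and the constants $t_0 > 0$, $\r \in (1-\tau,1)$, and $A > 2$ have been fixed as absolute parameters in the paragraph preceding the proposition.

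First, I would apply Lemma~\ref{lem-sf-q} to the test function $\ph(q,\cdot)$ to obtain
\[ \nct(\ph(q,\cdot)) = \Dtfct\,\npl(\ph(q,\cdot)) + \oh_{F,I,r}\biggl(N_E(\ph_E)\,e^{t_0 U|Q_+|} \prod_{j\in Q_+}|q_j|^\r \prod_{j\in Q_-}|q_j|^{-A}\biggr), \]
which is exactly the first error term appearing in \eqref{est1q}. Inside Lemma~\ref{lem-sf-q} the heavy lifting has already been done: the Eisenstein contribution $\ectr(\ph(q,\cdot))$ has been controlled in \eqref{Eisestq}, and the sum of Kloosterman sums $\K^r_\ch(\B^\sg_\x \ph(q,\cdot))$ has been bounded by combining the Bessel estimates summarized in \eqref{ab-tab} with the Weil bound \eqref{We} and the weighted lattice-point estimate of Lemma~2.2 of \cite{BM6+}.

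Next, I would invoke Lemma~\ref{lem-npl-comp} to factor the Plancherel side as
\[ \npl(\ph(q,\cdot)) = \npl_E(\ph_E) \prod_{j\in Q} 2\nplf_j(q_j) + \oh\biggl(N_E(\ph_E)\,U^{-1/2}|Q_+|\frac{\prod_{j\in Q}|q_j|}{\min_{j\in Q_+}|q_j|}\biggr), \]
with the convention that the error vanishes when $Q_+=\emptyset$. Substituting this into the preceding display and collecting the factor $\Dtfct \cdot 2^{|Q|} = \frac{2^{|Q|+1}\sqrt{|D_F|}}{(2\pi)^d}$ produces the main term of \eqref{est1q}. The prefactor $\Dtfct$ attached to the Plancherel discrepancy is absorbed into its implicit constant to yield the second error term, and the result of Lemma~\ref{lem-sf-q} contributes the first, finishing the proof.
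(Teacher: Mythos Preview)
Your proposal is correct and follows exactly the paper's approach: the proposition is obtained by combining Lemma~\ref{lem-sf-q} (which supplies the first error term) with Lemma~\ref{lem-npl-comp} (which supplies the main term and the second error term), after fixing $t_0$, $\r$, and $A$ as absolute constants. One small inaccuracy: the hypothesis $U\geq e^2$ is not stronger than needed---it is precisely the assumption used in the proof of Lemma~\ref{lem-npl-comp}.
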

This is the basis for the results in the next sections.

The main term in \eqref{est1q} can be estimated by $\oh_F\left(
N_E(\ph_E) \prod_{j\in Q} |q_j|\right)$. This implies
\begin{equation}\label{upb-ctr}
\nct (\ph(q,\cdot)) \ll_{F,I,r,U} N_E(\ph_E) \prod_{j\in Q} |q_j|.
\end{equation}

\section{Upper bound}]\label{sect-ub}
The next step is to derive by integration of \eqref{upb-ctr} an upper
bound for $\nct (f)$ for functions of the form $f=\ph_E\otimes
A \isdef \ph_E\otimes \ch_A$, where $\ch_A$ is the characteristic
function of a set~$A$. To integrate, we fix a non-negative measure
$d_Q$ on $\left( (0,\infty) \cup i[0,\infty)
\right)^Q$ of the form $d_Q q =\bigotimes_{j\in Q} d_j q_j$,
\begin{equation}
\int h(q)\, d_jq = \int_0^\infty h(it)\, dt + \int_0^{\nubs} h(x)\, dx
+ \sum_{\bt>0,\, \bt\equiv\frac{\x_j-1}2(1)} h(\bt).
\end{equation}
We shall use $d_R q = \bigotimes_{j\in R} d_j q_j$ for any set $R$ of
real places.

We define for $b\in\RR$ and for bounded measurable sets $B \subset
\left( \RR\cup i\RR\right)^R$ with $R\subset Q$:
\begin{align}
\nv_b(B) &= \int_B \prod_{j\in R} p(q_j)^b \, d_Rq,\\
\label{p-def} p(q_j)&=\begin{cases}
1&\text{ if } q_j \in (0,\nubs] \cup i[0,1),\\
|q_j|&\text{otherwise}.
\end{cases}
\end{align}
The set $R$ of real places is not visible in the notation $\nv_b$, and
should be clear from the set~$B$. Note that with $b=1$ this
definition agrees with~\eqref{tV1def}.

For our purpose it suffices to estimate $\nct (\ph\otimes A)$ for
bounded sets $A$ of the form
\begin{alignat}2 \label{Adescr}
A&=A_+\times A_0 \times A_-,& \quad A_+ & \subset \left( i[1,\infty)
\right)^{R_+},\\
\nonumber A_0 &= \left( \left( 0,\nubs \right) \cup i[0,1)
\right)^{R_0},&\quad A^-&\subset \prod_{j\in R_-} \left(
\txtfrac{\x_j+1}2+\NN_0\right),
\end{alignat}
for any partition $Q=R_+\sqcup R_0\sqcup R_-$. This choice reflects
that $q$ in \eqref{upb-ctr} has no factors in $(0,\nubs)\cup i[0,1)$.
We have failed to find a test function that allows the sum formula to
see sharp in this region.

The aim is to estimate $\nct (\ph_E\otimes A)$ by
$N_E(\ph_E)\nv_1(A)$. Given perfect knowledge of the spectral set
$\{\nu_{\varpi}\}$ one can choose $A_+$ as the union of tiny boxes
around many $\nu_{\varpi,A_+}=(\nu_{\varpi,j})_{j\in A_+}$ in such a
way that $\ct(\ph_E\otimes A)$ is large while $\nv_1(A)$ stays
arbitrarily small. This shows that we need a further assumption on
the factor $A_+$.

Let $\dist$ be the distance along $i[0,\infty)\cup(0,\nubs)$ given by
\begin{equation}\label{dist-def}
\dist(\nu,q) = \begin{cases} |q-\nu|&\text{ if }q,\nu \in
i[0,\infty)\text{ or }q,\nu\in
(0,\nubs),\\
|q|+|\nu|&\text{ otherwise}.
\end{cases}
\end{equation}
For $\e>0$ and $\nu \in \left( i[0,\infty)\cup(0,\nubs)
\right)^B$, where $B$ is a set of real places:
\begin{equation}
A(\nu,\e) = \left\{ q \in \left( i[0,\infty)\cup(0,\nubs)
\right)^B \;:\; \dist(q_j,\nu_j) \leq \txtfrac\e2\text{ for any }j\in
B\right\}.
\end{equation}
Again, the set $B$ should be clear from the context.

\begin{defn}Let $w,\e>0$, and let $B$ be a set of real places. A
\emph{$(w,\e)$-blunt} subset $H\subset \left(
i[0,\infty)\cup(0,\nubs] \right)^B$ is a $d_Bq$-measurable set such
that
\begin{equation}
\int_{A(\nu_R,\bt) \cap H} d_Bq \geq w \,\vol_B A(\nu,\bt)\quad\text{
for all }\nu \in H\text{ and all }\bt\in(0,\e]\,.
\end{equation}
By $\vol_B$ we mean the volume for $d_B$.
\end{defn}

Note that $(w,\e)$-bluntness implies $(w,\e_1)$-bluntness for any
$\e_1\in (0,\e)$. Boxes with size at least~$\e$ in all coordinate
directions are $(1,\e)$-blunt.

\begin{prop}\label{prop-ub}Let $A=A_+\times A_0 \times A_-$ be as in
\eqref{Adescr}. Suppose that $A_+$ is $(w,\e)$-blunt for some $w>0$
and~$\e\in (0,e^{-1}]$. Then for any $\ph_E$:
\begin{equation}\label{ub}
\ct(\ph_E\otimes A) \ll_{F,I,r} w^{-1} N_E(\ph_E) \nv_1(A).
\end{equation}
\end{prop}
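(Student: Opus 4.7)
My plan is to integrate the pointwise upper bound \eqref{upb-ctr} over $q\in A_+\times A_-$ against the measure $d_{R_+\cup R_-}q$, and then use the bluntness of $A_+$ to produce a matching lower bound on the resulting spectral sum.

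Two preparatory reductions will be needed. First, since $|\ph_j(\nu)|\leq N_j(\ph_j)(1+|\nu|)^{-a}$ on the continuous part of the spectrum and $|\ph_j(\frac{b-1}{2})|\leq N_j(\ph_j)\,b^{-a}$ on the discrete part, I fix once and for all a non-negative test function $\Psi_j\in T_{\xi_j}(\tau,a)$ satisfying $\Psi_j(\nu)\gg(1+|\nu|)^{-a}$ on the whole spectrum, so that $|\ph_j|\leq N_j(\ph_j)\,\Psi_j$ pointwise; it then suffices to prove $\nct(\Psi_E\otimes A)\ll w^{-1}\nv_1(A)$. Second, I absorb the places of $R_0$ into $E$ by further choosing, for each $j\in R_0$, a fixed non-negative $\Psi_j\in T_{\xi_j}(\tau,a)$ that is bounded below by a positive constant $c_0$ on the bounded set $(0,\nubs]\cup i[0,1)$; set $E':=E\cup R_0$.

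Next, I apply \eqref{upb-ctr} to the test function $\Phi(q,\nu)=\Psi_{E'}(\nu_{E'})\prod_{j\in R_+}\ph_j(q_j,\nu_j)\prod_{j\in R_-}\ph_j(q_j,\nu_j)$, where for $j\in R_\pm$ the local factors are as in \eqref{phi-q-def} with parameter $U\asymp\e^{-2}$, chosen so that the Gaussian at $R_+$-places concentrates within distance $\e/2$ of its peak $q_j$. This yields $\nct(\Phi(q,\cdot))\ll\prod_{j\in R_+\cup R_-}|q_j|$ uniformly for $q$ in the allowed range. Integrating over $q_+\in A_+$ and summing over $q_-\in A_-$, and using $\nv_1(A_0)=\oh(1)$, I obtain
\[ J\isdef\int_{A_+}\sum_{q_-\in A_-}\nct(\Phi(q,\cdot))\,d_{R_+}q_+\ \ll\ \nv_1(A_+)\nv_1(A_-)\asymp\nv_1(A). \]

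Expanding $J$ spectrally as $J=\sum_\varpi\Psi_{E'}(\nu_{\varpi,E'})T_+(\nu_{\varpi,R_+})T_-(\nu_{\varpi,R_-})|c^r(\varpi)|^2$, with $T_\pm$ the local integrals/sums of the $R_\pm$-factors over $A_\pm$, I extract a lower bound from the ``good'' terms (those with $\nu_{\varpi,Q}\in A$): (i) $T_-=1$ by \eqref{phi-q-def}; (ii) the $(w,\e)$-bluntness of $A_+$ combined with the Gaussian concentration forces $T_+\geq Cw$ with $C=(e^{-1/4}/\sqrt\pi)^{|R_+|}$, uniform in $\e$; and (iii) $\Psi_{E'}(\nu_{\varpi,E'})\geq c_0^{|R_0|}\Psi_E(\nu_{\varpi,E})$ on $A$. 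The remaining $\varpi$ either have $T_-=0$ (if $\nu_{\varpi,R_-}\notin A_-$), or give $T_+\geq 0$ (if $\nu_{\varpi,R_+}$ is purely imaginary, since the Gaussian is non-negative there), or have $\nu_{\varpi,j}\in(0,\nubs]$ for some $j\in R_+$. In this last case the corresponding factor of $T_+$ is exponentially small, of size $\oh(\sqrt U\,e^{-U(1-\nubs^2)})$, and the total contribution summed over such $\varpi$ is $\ll\nv_1(A)$ by a crude application of \eqref{upb-ctr}. Combining, $wC\,c_0^{|R_0|}\,\nct(\Psi_E\otimes A)\ll J+\oh(\nv_1(A))\ll\nv_1(A)$, and together with the first reduction this yields \eqref{ub}. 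The main hurdle will be the explicit construction of the dominating positive test function $\Psi_j$ in $T_{\xi_j}(\tau,a)$ (one needs holomorphicity and decay on the strip $|\re z|\leq\tau$, non-negativity on the spectrum, and a lower bound by $c(1+|\nu|)^{-a}$) and the careful handling of the exponentially suppressed complementary-series contributions.
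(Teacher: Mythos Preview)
Your overall strategy coincides with the paper's: integrate the pointwise bound \eqref{upb-ctr} over $q\in A_+\times A_-$ with $U=\e^{-2}$, and use $(w,\e)$-bluntness of $A_+$ to extract a lower bound $\int_{A_+\times A_-}\Phi(q,\nu_\varpi)\,dq\gg w\,\Psi_E(\nu_{\varpi,E})$ for the good $\varpi$. The paper implements this with the explicit local test functions $\ph_p(\nu)=(p^2-\nu^2)^{-a/2}$ at the $E$-places and $e^{\nu^2}$ at the $R_0$-places, which play exactly the role of your abstract $\Psi_j$; your preliminary reduction $|\ph_E|\leq N_E(\ph_E)\,\Psi_E$ is a useful clarification, since the paper's write-up literally proves only the case $\ph_E=\ph_{p,E}$.

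You are right to flag the complementary-series case $\nu_{\varpi,j}\in(0,\nubs]$ for some $j\in R_+$: there $\ph_j(it,\nu)=2\sqrt{U/\pi}\,e^{U(\nu^2-t^2)}\cos(2Ut\nu)$ can be negative, so the blanket non-negativity the paper asserts for $\hat\ph(q,\nu)$ on $Y_\x$ is not literally correct. However, your one-line fix ``the total contribution summed over such $\varpi$ is $\ll\nv_1(A)$ by a crude application of \eqref{upb-ctr}'' does not close the gap as stated. The bound \eqref{upb-ctr} controls $\nct$ of a single product test function, whereas the bad sum you need to estimate involves $\sum_{\text{bad }\varpi}|c^r(\varpi)|^2\,|T_+(\nu_{\varpi,R_+})|$ with $\nu_{\varpi,j}$ ranging unrestrictedly over $i[0,\infty)$ at the other $R_+$-places; this is not directly of that form and the sum has no a~priori reason to converge. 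One clean way to make your idea rigorous: dominate on the spectrum by a genuine test function, namely $|\ph_j(q_j,\nu)|\leq\ph_j(q_j,\nu)+C_U\,e^{\nu^2}$ for all $\nu\in Y_{\xi_j}$, with $C_U\asymp\sqrt U\,e^{-U(1-\nubs^2)}$ (using $|q_j|\ge 1$). Expand $\prod_{j\in R_+}\bigl(\ph_j(q_j,\cdot)+C_U e^{\nu_j^2}\bigr)$ over subsets $S\subset R_+$ and apply \eqref{upb-ctr} term by term, moving the $S$-places into the enlarged $E'$-factor. After integrating over $A_+\times A_-$ each $S\neq\emptyset$ term contributes $\ll C_U^{|S|}\,\nv_1(A)$, which is $o(\nv_1(A))$; this both justifies Fubini and bounds the bad contribution by $\oh(\nv_1(A))$ as you claim.
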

\begin{proof}We apply Proposition~\ref{prop-est1q} with $E$ replaced
by $\hat E = E \cup R_0$, $Q_\pm$ replaced by $R_\pm$, and the test
function $\hat\ph(q,\cdot)$ chosen as follows
\begin{align}
\nonumber &\begin{array}{|c|c|c|c|} \hline j\in& E & R_+\cup R_- & R_0
\\
\hline \hat\ph_j = & \ph_p & \ph_j(q,\cdot) \text{ with }U\geq e^2&
\begin{cases} e^{\nu^2}&\text{ if }|\re\nu|\leq \tau,\\
0&\text{ otherwise};
\end{cases}
\\ \hline
\end{array}\\
\label{phi-p-def} &\ph_p (\nu) = \begin{cases}
(p^2-\nu^2)^{-a/2}&\text{ if }|\re\nu|\leq \tau,\\
(p^2+\nu^2)^{-a/2}&\text{ otherwise},
\end{cases}
\end{align}
with some fixed $p>\tau$ and $q\in \left( i[1,\infty)
\right)^{R_+} \times \prod_{j\in R_-} \left(
\frac{\x_j+1}2+\NN_0\right)$. We put
\begin{equation}
\label{phi-p-E-def} \ph_{p,E}(\nu)=\bigotimes_{j\in E}\ph_p(\nu_j).
\end{equation}
\text{From} \eqref{upb-ctr} we obtain
\[ \nct(\hat \ph(q,\cdot)) \ll_{F,I,r} N_E(\ph_{p,E}) \prod_{j\in
R_+\cup R_-} |q_j|, \] where we have used that \eqref{NE-def} implies
$N_j(\ph_j)=\oh(1)$ for $j\in R_0$. Integration over $q$ gives
\begin{equation}\label{ApAm-int}
\int_{A_+\times A_-} \nct(\hat \ph(q,\cdot)) d_{R_+\cup R_-}q
\ll_{F,I,r} \nv_1(A_+\times A_-).
\end{equation}
We have omitted $N_E(\ph_{p,E})$ since it is $\oh(1)$ for the fixed
choice of~$p$.

Now we note that for a given $\nu \in Y_\x$ we have $\hat\ph(q,\nu)
\geq 0$. With the obvious meaning $(\nu_j)_{j\in B}$ of $\nu_B$ for
sets $B$ of real places, we have
\begin{align*}
\int_{A_+ \times A_-} &\hat\ph(q,\nu) \, d_{R_+\cup R_-} = \int_{A_+
\times\{\nu_{R_-}\}} \hat \ph(q,\nu_\varpi)\, d_{R_+\cup R_-}q
\displaybreak[0]\\
&\geq \int_{A(\nu_{R_+},\e)\cap A_+} \ph_{p,E}(\nu_E) \prod_{j\in R_0}
e^{\nu_j} \prod_{j\in R_-} 1 \prod_{j\in R_+} \ph_j(q_j,\nu_j)\,
d_{R_+}q
\displaybreak[0]\\
& \geq \ph_{p,E}(\nu_E) e^{-|R_0|} \left( \sqrt{\txtfrac
U\pi}e^{-U\e^2} \right)^{|R_+|}\int _{A(\nu_{R_+},\e)\cap A_+} \,
d_{R_+}q
\displaybreak[0]\\
&\geq w \ph_{p,E}(\nu_E) e^{-|R_0|} \biggl( \frac{\e U^{1/2}
e^{-U\e^2}}{\pi^{1/2}}\biggr)^{|R_+|} .
\end{align*}
With the choice $U=\e^{-2}$:
\begin{equation}
\int_{A_+ \times A_-} \hat\ph(q,\nu) \, d_{R_+\cup R_-} \geq w
e^{-|R_0|} \pi^{-|R_+|/2} \ph_{p,E}(\nu_E)\,.
\end{equation}

Since $\hat \ph(q,\cdot)\geq 0$ on $Y_\x$, we can reverse the order of
summation and integration in
\[ \int_{A_+\times A_-} \nct(\hat \ph(q,\cdot))\, d_{R_+\cup R_-}q =
\sum_\varpi \left|c^r(\varpi)\right|^2 \int_{A_+\times A_-} \hat
\ph(q,\nu_\varpi)\, d_{R_+\cup R_-}q. \] Hence
\begin{align*}
\nct(\ph_E &\otimes A) = \sum_{\varpi,\, \nu_{\varpi,Q} \in A}
\left|c^r(\varpi)\right|^2 \ph_{p,E}(\nu_{\varpi,E})
\displaybreak[0]\\
&\leq \frac{e^{|R_0|}}w \pi^{|R_+|/2} \sum_{\varpi,\, \nu_\varpi \in
A} \left|c^r(\varpi)\right|^2 \int_{A_+\times A_-} \hat\ph(q,\nu) \,
d_{R_+\cup R_-}q\displaybreak[0]\\
&\ll_{d} \frac1w \int_{A_+\times A_-} \nct(\hat\ph(q,\cdot))\,
d_{R_+\cup R_-}q
\end{align*}
With \eqref{ApAm-int}:
\begin{equation}
\nct(\ph_E \otimes A) \ll_{F,I,r} \frac1w \nv_1(A),
\end{equation}
where we have used that $\nv_1(A_0)=\oh(1)$.
\end{proof}

\section{Asymptotic formula, first stage} Now we start a more precise
approximation of $\ct(\ph_E\times C)$ where $\ph_E$ is still an
arbitrary test function, and where $C$ is a product $C^+\times C^-$,
with bounded closed sets
$C^+ \subset \left( i[1,\infty) \right)^{Q_+}$ and
$C^-\subset[\frac12,\infty)^{Q_-}$. Only the intersection
$C^-\cap \prod_{j\in Q_-} \left( \frac{\x_j+1}2+\NN_0\right)$ matters
for the present purpose, not $C^-$ itself. For $C^+$ we define, with
$c>0$:
\begin{align}\label{C(c)-def}
C^+(0)&= C^+\,,
\displaybreak[0]\\
\nonumber C^+(c)&= \left\{\nu\in \left( i[0,\infty)\cup(0,\nubs]
\right)^{Q_-}\;:\; A(\nu,c) \cap C^+ \neq \emptyset\right\}\,,
\displaybreak[0]\\
\nonumber C^+(-c)&= \left\{ \nu\in C^+\;:\; A(\nu,c)\subset
C^+\right\}\,,
\displaybreak[0]\\
\nonumber C^+[c]&=C^+(c)\setminus C^+(-c)\,.
\end{align}

\begin{prop}\label{prop-st1-prod}Let $r\in \Ocal'\setminus \{0\}$.
Let $a>2$ and $\tau\in [\frac14,\frac12]$ as in Theorem~\ref{thm-sf},
and decompose the set of real places of~$F$ as
$E \sqcup Q_+ \sqcup Q_-$ with $Q=Q_+\cup Q_-\neq\emptyset$. There
are $t_0>1$, $D>0$, $\r\in (1-\nobreak\tau,1)$ and $A>2$ such that
for any $U>De^2$ and any $\e\in [(\frac{D}U)^{1/2},e^{-1}]$, for all
products $\ph_E=\bigotimes_{j\in E} \ph_j$ of local test functions
and for all bounded $d_Q$-measurable sets $C=C^+\times C^-$,
$C^+\subset \left( i[1,\infty) \right)^{Q_+}$ and $C^-\subset
[\frac12,\infty)^{Q_-}$:
\begin{equation}
\nct (\ph_E \times C) = \Dtfct 
\npl(\ph_E\otimes C) + N_E(\ph_E) \oh_{F,I,r}(E(C,U,\e)) ,
\end{equation}
where if $Q_+=\emptyset$
\begin{equation}\label{E-Q+e} E(C,U,\e) = \nv_{-A}(C^-),
\end{equation}
and if $Q_+\neq\emptyset$
\begin{align}\label{E-Q+ne}
E(C,U,\e)&= e^{t_0U|Q_+|} \nv_\r(C^+) \nv_{-A}(C^-)
+ e^{-U\e^2} \nv_1(C) \\
\nonumber
&\qquad\hbox{} + \nv(C^+[2\e]\times C^-) + U^{-1/2}\nv_1(C).
\end{align}
\end{prop}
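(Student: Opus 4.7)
The plan is to apply the pointwise identity of Proposition~\ref{prop-est1q} with the external factor $\ph_E$ as given and the local factors $\ph_j(q_j,\cdot)$ from Lemma~\ref{lem-sf-q} for $j\in Q=Q_+\sqcup Q_-$, and then to integrate both sides against $d_Qq$ over the set $C=C^+\times C^-$. On the spectral side, absolute convergence permits an interchange of the sum over $\varpi$ with the integration, giving
\[ \int_C\nct(\ph(q,\cdot))\,d_Qq \;=\; \sum_\varpi |c^r(\varpi)|^2\,\ph_E(\nu_{\varpi,E})\,I(\nu_{\varpi,Q}),\qquad I(\nu)\isdef\int_C\prod_{j\in Q}\ph_j(q_j,\nu_j)\,d_Qq. \]

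For $j\in Q_-$ the factor $\ph_j(q_j,\cdot)$ is the indicator of $\{\pm q_j\}$, so the $Q_-$-part of $I$ collapses to the indicator of $\nu_{Q_-}\in C^-\cap\prod_{j\in Q_-}(\frac{\x_j+1}2+\NN_0)$. For $j\in Q_+$ the corresponding part is a product of Gaussian convolutions of width $\sim U^{-1/2}$ centered at $\nu_{\varpi,Q_+}$; a direct Gaussian tail estimate together with the definitions in \eqref{C(c)-def} furnishes an absolute $c>0$ with
\[ I(\nu_{Q_+}) \;=\; \mathbf 1_{C^+}(\nu_{Q_+}) + \oh(1)\,\mathbf 1_{C^+[2\e]}(\nu_{Q_+}) + \oh(e^{-cU\e^2})\,\mathbf 1_{C'}(\nu_{Q_+}) \]
for a mild $\e$-thickening $C'$ of~$C^+$. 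Substituting back expresses $\int_C\nct(\ph(q,\cdot))\,d_Qq$ as $\nct(\ph_E\otimes C)$ plus error counts $R_{\mathrm{bdry}}\ll \nct(\ph_E\otimes(C^+[2\e]\times C^-))$ and $R_{\mathrm{tail}}\ll e^{-cU\e^2}\,\nct(\ph_E\otimes(C'\times C^-))$. Each is converted into an $\nv_1$-estimate by Proposition~\ref{prop-ub}, applied after covering the relevant set by coordinate boxes of side~$\e$, which are automatically $(1,\e)$-blunt: this gives $R_{\mathrm{bdry}}\ll_{F,I,r}N_E(\ph_E)\,\nv_1(C^+[2\e]\times C^-)$ and $R_{\mathrm{tail}}\ll_{F,I,r}N_E(\ph_E)\,e^{-cU\e^2}\,\nv_1(C)$. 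The two-sided constraint $(D/U)^{1/2}\leq \e\leq e^{-1}$ is chosen so that the exponential factor absorbs the polynomial cost of the covering and so that the boxes do not degenerate.

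On the geometric side, the integral of the main term of~\eqref{est1q} against $d_Qq$ over~$C$ reproduces $\Dtfct\,\npl(\ph_E\otimes C)$ on the nose: the prefactor $2^{|Q|+1}$ in \eqref{est1q} combines with the $|Q|$ factors of~$2$ built into $\npl_{\x_j}$ through~\eqref{npl-def} to give exactly $\Dtfct$ times the full Plancherel measure. The second error term of \eqref{est1q} integrates by separation of variables to $N_E(\ph_E)\,e^{t_0U|Q_+|}\,\nv_\r(C^+)\,\nv_{-A}(C^-)$, and the third is bounded by $N_E(\ph_E)\,U^{-1/2}\,\nv_1(C)$ after the trivial $\prod_{j\in Q}|q_j|\big/\min_{j\in Q_+}|q_j|\leq\prod_{j\in Q}|q_j|$. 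Assembling these contributions yields the formula with error $E(C,U,\e)$ of \eqref{E-Q+ne}; if $Q_+=\emptyset$ the Gaussian machinery is vacuous and only $\nv_{-A}(C^-)$ survives, matching \eqref{E-Q+e}. The main technical obstacle is the control of $R_{\mathrm{bdry}}$ and $R_{\mathrm{tail}}$: Proposition~\ref{prop-ub} requires $(w,\e)$-bluntness, but $C^+$ is not assumed blunt, which forces the $\e$-scale coordinate-box covering and thereby dictates the stated thresholds on $U$ and~$\e$.
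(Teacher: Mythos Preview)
Your overall architecture matches the paper's: integrate \eqref{est1q} over $C$, interchange sum and integral, and compare $\int_C\nct(\ph(q,\cdot))\,d_Qq$ with $\nct(\ph_E\otimes C)$ by splitting according to the position of $\nu_{\varpi,Q_+}$ relative to $C^+$. The inner and boundary pieces, and the handling of the main and $U^{-1/2}$ terms on the right of \eqref{est1q}, are essentially as in the paper.

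The gap is in the tail term $R_{\mathrm{tail}}$. Your claimed pointwise decomposition of $I(\nu_{Q_+})$ with the last term supported on a bounded ``mild $\e$-thickening $C'$'' is false: the Gaussian factors never vanish, so $I(\nu_{Q_+})>0$ for every $\nu_{Q_+}$, however far from $C^+$. If instead you take $C'$ to be all of $\bigl(i[0,\infty)\cup(0,\nubs]\bigr)^{Q_+}$ and use only the uniform bound $I(\nu_{Q_+})=\oh(e^{-cU\e^2})$ at distance $\geq\e$, then $R_{\mathrm{tail}}\ll e^{-cU\e^2}\sum_{\varpi}|c^r(\varpi)|^2|\ph_E(\nu_{\varpi,E})|$ with the sum running over \emph{all} $\varpi$ whose $Q_+$-part lies outside a bounded set; this sum diverges, since $\ph_E$ provides no decay in the $Q$-directions, and Proposition~\ref{prop-ub} cannot be invoked on an unbounded set.

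The paper closes this gap by a shell decomposition of the exterior: with $D_n=C^+(\e(n{+}1))\setminus C^+(\e n)$ one has the graded estimate $I(\nu_{Q_+})\ll e^{-U\e^2n^2}$ on the $n$-th shell (Lemma~\ref{lem-IJ} and its global consequence), applies Proposition~\ref{prop-ub} to each shell after enclosing it in the $(1,\e)$-blunt set $\bigcup_{\nu\in D_n}A(\nu,\e)$, and then proves a geometric growth lemma $\nv_1(D_n)\leq R^n\nv_1(C^+[\e])$ with $R=(3(1+e^{-1}))^{|Q_+|}$, obtained by translating $D_{n+1}$ into $D_n$ by the $3^{|Q_+|}$ vectors in $\{-\e,0,\e\}^{Q_+}$. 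The constant $D=\log R$ in the statement is exactly what makes $\sum_{n\geq1}e^{-U\e^2n^2}R^n$ converge under the hypothesis $U\e^2\geq D$. You correctly intuit that the lower bound on $\e$ is there to ``absorb the polynomial cost of the covering'', but the mechanism---graded Gaussian decay against exponential shell-volume growth---is the missing idea.
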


At this point we can derive the statement in example~(iv) in the
introduction. We denote by $S_{\!b}(\G,\ch)$ the space of holomorphic
cusp forms on the product ${\mathfrak H}^d$ of $d$ copies of the
upper half plane for the group $\G$ with character $\ch$ and weight
$b\in \NN^d$ satisfying $b_j\geq 1$ and $b_j\equiv\x_j\bmod 2$ for
all~$j$.
\begin{cor}\label{corhol}The space $S_{\!b}(\G_0(I),\ch)$ is non-zero
for all but finitely many weights in the set
$\bigl\{ b\in \NN^d\;:\; b_j\geq 2\,,\;
b_j\equiv\x_j\bmod 2\text{ for all }j\bigr\}$.
\end{cor}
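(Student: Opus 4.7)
The plan is to deduce the corollary as a direct specialization of Proposition~\ref{prop-st1-prod} in the extremal case where $Q_+$ is empty and all of $Q$ consists of discrete-series places. I would fix a totally positive $r\in\Ocal'\setminus\{0\}$ (which exists because $F$ is totally real) and apply the proposition with $E=Q_+=\emptyset$, $Q_-=\{1,\ldots,d\}$, trivial test factor $\ph_E=1$ (so $N_E(\ph_E)=1$), and $C=C^-=\{\pp\}$, where $\pp=\bigl((b_j-1)/2\bigr)_j$. The required hypotheses $C^-\subset[\tfrac12,\infty)^{Q_-}$ and $\pp\in\prod_j\bigl(\tfrac{\x_j+1}{2}+\NN_0\bigr)$ follow from $b_j\geq 2$ together with $b_j\equiv\x_j\bmod 2$, so the atomic part of $d_jq$ picks out $\pp_j$.

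With $Q_+=\emptyset$ the error term of Proposition~\ref{prop-st1-prod} collapses to $E(C,U,\e)=\nv_{-A}(C^-)$, independent of $U,\e$. Since $(b_j-1)/2\geq\tfrac12>\nubs$ (using $\lbs>0$), definition~\eqref{p-def} gives $p(\pp_j)=\pp_j$, and hence $\nv_{-A}(C^-)=\prod_j\pp_j^{-A}$. Reading $\nplf_j(\pp_j)=\pp_j$ off the appropriate row of~\eqref{nplf-def} and computing $\npl(C)=\prod_j 2\nplf_j(\pp_j)=\prod_j(b_j-1)$, the proposition yields
\begin{equation*}
\nct(\{\pp\}) \;=\; \frac{2\sqrt{|D_F|}}{\pi^d}\prod_j\frac{b_j-1}{2}\;+\;\oh_{F,I,r}\!\left(\prod_j\Bigl(\frac{b_j-1}{2}\Bigr)^{-A}\right),
\end{equation*}
where $A>2$ is absolute. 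Because every factor $(b_j-1)/2$ is bounded below by $\tfrac12$, the error is uniformly bounded and in fact tends to $0$ as $\prod_j(b_j-1)/2\to\infty$, while the main term tends to infinity. Since the set of admissible $b$ with $\prod_j(b_j-1)/2\leq M$ is finite for any $M$ (each $b_j$ is then bounded by $2^d M+1$), I conclude $\nct(\{\pp\})>0$ for all but finitely many admissible $b$.

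For each such $b$, non-vanishing of $\nct(\{\pp\})=\sum_{\nu_\varpi=\pp}|c^r(\varpi)|^2$ produces a cuspidal $\varpi$ with $\nu_\varpi=\pp$ and $c^r(\varpi)\neq 0$. The spectral parameter forces each $\varpi_j$ into the discrete series of weight $b_j$; because $r$ is totally positive, the Whittaker functional at a positive Fourier order annihilates the antiholomorphic factor, so $\varpi_j$ must be of holomorphic type at every place. A lowest-weight vector of $\varpi$ is then a non-zero element of $S_b(\G_0(I),\ch)$, giving the corollary. The most delicate step is this last implication: ruling out antiholomorphic factors relies on the explicit normalization of $c^r(\varpi)$ from~\cite{BM6+} together with the standard Whittaker-model calculation for $\SL_2(\RR)$ discrete series. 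Every other step is essentially bookkeeping from Proposition~\ref{prop-st1-prod}.
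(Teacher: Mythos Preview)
Your proof is correct and follows essentially the same route as the paper: apply Proposition~\ref{prop-st1-prod} with $E=Q_+=\emptyset$ and $C=\{\pp\}$, read off the main term and the $\nv_{-A}$ error, and then use total positivity of $r$ together with the normalization from~\cite{BM6+} to force holomorphic type at every place. The paper's proof is slightly terser but identical in substance, citing (2.29) of~\cite{BM6+} for the last step where you invoke the Whittaker-model calculation.
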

\begin{proof}We apply Proposition~\ref{prop-st1-prod} with
$E=Q_+=\emptyset$, and $C=C^-$ equal to the singleton
$ C_b=\bigl\{ \bigl(\frac{b_j-1}2\bigr)_j\bigr\}$. Then
$\npl(C_b)=\prod_{j=1}^d\frac{b_j-1}2$ and
$\nv_{-A}(C_b)=\prod_{j=1}^d\bigl(\frac{b_j-1}2\bigr)^{-A}$. We
obtain \eqref{holas} with the constant $C$ equal to
$2\sqrt{|D_F|}(2\pi)^{-d}$.

If we take $r$ totally positive, the $\varpi$ entering $\nct(C_b)$
form an orthogonal system of cuspidal representations for which each
factor~$\varpi_j$ is a discrete series representation with lowest
weight~$b_j$. (See (2.29)
in~\cite{BM6+}.) Thus these $\varpi$ correspond to an orthogonal basis
of $S_{\!b}(\G_0(I),\ch)$. So $\nct(C_b)$ can be non-zero only if
$S_{\!b}(\G_0(I),\ch)\neq\{0\}$.
\end{proof}

\subsection{Proof of Proposition \ref{prop-st1-prod}} The proof of the
proposition is rather long and will require some intermediate steps
that we shall give in a series of lemmas.
\subsubsection{Integration}We integrate \eqref{est1q} over~$C$. Taking
into account \eqref{npl-def}, we obtain:
\begin{align}
\int_c \nct &(\ph(q,\cdot))\, d_Q q - \Dtfct 
 \npl_E(\ph_E) \npl_Q(C)
\\
\nonumber &\ll_{F,I,r} N_E(\ph_E) e^{t_0 U |Q_+|} \nv_\r(C^+)
\nv_{-A}(C^-) + N_E(\ph_E) U^{-1/2} V_1(C).
\end{align}
In this term we have left out the denominator $\min_{j\in Q_+}|q_j|$,
since we have already a small factor $U^{-1/2}$.

To prove Proposition \ref{prop-st1-prod} we will estimate the
difference
\begin{equation}\int_c \nct (\ph(q,\cdot))\,
d_Qq - \nct (\ph_e\otimes C).\end{equation}

\subsubsection{Local comparison}\label{sect-loc-cmp}Let $X_j =
X_j^+=\frac{\x_j+1}2+\NN_0$ if $j\in Q_-$, and $X_j =
(0,\nubs]\cup i[0,\infty)$, $X_j^+ = i[1,\infty)$ if $j\in Q_+$. We
consider for $\nu \in X_j$:
\begin{align}\label{IJdef}
I_\al^j(\nu) &= \int_{q\in X_j,\, \dist(q,\nu) \leq \al}
\ph_j(q,\nu)\, d_jq,\\
\nonumber J_\al^j(\nu) &= \int_{q\in X_j,\, \dist(q,\nu) \geq \al}
\ph_j(q,\nu)\, d_jq.
\end{align}
\begin{lem}\label{lem-IJ}If $\al\geq U^{-1/2}$, then for $j\in Q_+$,
$\nu\in X_j$:
\begin{alignat*}3
&\text{ if }\nu\in i[1+\al,\infty):&\quad I_\al^j(\nu)&=1+\oh\left(
e^{-U\al^2}\right),&
\quad J_\al^j(\nu)&= \oh\left( e^{-U\al^2}\right),\\
&\text{if }\nu \in i[0,1+\al):& I_\al^j(\nu)&=\oh(1),& J_\al^j(\nu)
&=\oh\left( e^{-U\al^2}\right).
\end{alignat*}
For $j\in Q_+$, $\nu\in X_j$, $U^{-1/2}\leq \e \leq e^{-1}$, and $\nu
\in i[0,1-\nobreak \e)\cup (0,\nubs]$:
\[ I_\e^j(\nu)=0,\qquad J_\e^j(\nu) = \oh\left( e^{-U\e^2}\right).\]
For $j\in Q_-$, $\nu \in X_j$:
\[ I_\al^j(\nu)=1,\qquad J_\al^j(\nu)=0.\]
\end{lem}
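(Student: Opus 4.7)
The proof reduces to a direct calculation from the explicit formulas in \eqref{phi-q-def}, together with standard Gaussian tail bookkeeping. I would separate the cases $j\in Q_-$ and $j\in Q_+$, and within $Q_+$ handle the three subcases by the position of $\nu$.

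For $j\in Q_-$ the function $\ph_j(q,\nu)$ vanishes except at $q=\pm\nu$, where it equals $1$. Since $d_jq$ is counting measure on $X_j=\frac{\x_j+1}{2}+\NN_0$ and $\nu$ lies in that same set, only the point $q=\nu$ contributes, at distance $0\leq\al$ from $\nu$. Hence $I_\al^j(\nu)=1$ and $J_\al^j(\nu)=0$ for every $\al>0$.

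For $j\in Q_+$ the effective support of $\ph_j(\cdot,\nu)$ as a function of $q$ is $q\in i[1,\infty)$, since $\ph_j$ was only introduced for parameters $q$ in this range in Lemma~\ref{lem-sf-q} and is extended by zero outside (otherwise the sum $\sum_\beta \ph_j(\beta,it_0)$ over the discrete real part of the support of $d_jq$ would diverge). Writing $q=is$ with $s\geq 1$, one has
\[
\ph_j(is,it_0)=\sqrt{U/\pi}\bigl(e^{-U(s-t_0)^2}+e^{-U(s+t_0)^2}\bigr),\qquad \ph_j(is,x)=2\sqrt{U/\pi}\,e^{U(x^2-s^2)}\cos(2Uxs),
\]
so in both cases $|\ph_j(is,\nu)|$ is dominated by a Gaussian in $s$ of width $U^{-1/2}$. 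Each subcase of the lemma is now a direct Gaussian estimate. If $\nu=it_0$ with $t_0\geq 1+\al$, the $\dist$-ball of radius $\al$ around $\nu$ sits inside $i[1,\infty)$, and the tail bound $\int_\al^\infty\sqrt{U/\pi}\,e^{-Us^2}\,ds \ll e^{-U\al^2}$ gives $I_\al^j(\nu)=1+\oh(e^{-U\al^2})$ and $J_\al^j(\nu)=\oh(e^{-U\al^2})$; the secondary Gaussian at $-t_0$ contributes only $\oh(e^{-U})$ because $s+t_0\geq 2$ throughout. If $0\leq t_0<1+\al$ the ball may be clipped at $s=1$, so $I_\al^j(\nu)=\oh(1)$ by the total Gaussian mass, while $J_\al^j(\nu)=\oh(e^{-U\al^2})$ since the surviving domain stays at distance at least $\al$ from $t_0$. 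Finally, if $\nu\in i[0,1-\e)\cup(0,\nubs]$ the ball of radius $\e$ around $\nu$ does not meet $i[1,\infty)$ at all --- it is contained in $i[0,1)$ for imaginary $\nu$, and every $is$ with $s\geq 1$ has $\dist(is,\nu)=s+|\nu|\geq 1>\e$ for real $\nu$ --- so $I_\e^j(\nu)=0$; for $J_\e^j(\nu)$ one uses the Gaussian tail when $\nu$ is imaginary and the absolute-value estimate $|\ph_j(is,x)|\leq 2\sqrt{U/\pi}\,e^{-U(1-\nubs^2)}$ when $\nu=x$ is real, both of order $\oh(e^{-U\e^2})$ since $\e\leq e^{-1}$ and $1-\nubs^2$ is bounded away from $0$.

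The only mildly subtle ingredient is the real-$\nu$ subcase, where $\ph_j(is,x)$ is an oscillatory complex exponential rather than a true Gaussian; it is enough to bound by the absolute value, using $\re(is-x)^2=x^2-s^2\leq\nubs^2-1<0$ for $s\geq 1$, and the cosine oscillation plays no further role. Everything else is routine Gaussian tail bookkeeping.
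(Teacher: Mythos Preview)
Your proof is correct and follows essentially the same route as the paper's: reduce $j\in Q_-$ to the trivial observation, and for $j\in Q_+$ split by the location of $\nu$ and apply Gaussian tail bounds to the kernel $\sqrt{U/\pi}\bigl(e^{-U(s-t_0)^2}+e^{-U(s+t_0)^2}\bigr)$, treating the real-$\nu$ case by absolute values via $\re(is-x)^2=x^2-s^2\leq \nubs^2-1$. One small imprecision: saying the secondary Gaussian contributes $\oh(e^{-U})$ because $s+t_0\geq 2$ does not literally yield $\oh(e^{-U\al^2})$ for arbitrarily large $\al$; the paper instead bounds it by $\oh\bigl(e^{-U(2|\nu|-\al)^2}\bigr)$ and uses $2|\nu|-\al\geq 2+\al>\al$, which works uniformly in~$\al$.
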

\begin{proof}The results for $j\in Q_-$ are immediate. We consider the
case $j\in Q_+$. The best situation is $\nu \in
i\left[1+\al,\infty\right)$. Then, with \eqref{Gint}:
\begin{align}
I_\al^j(\nu) &= \sqrt{\frac U\pi} \sum_\pm
\int_{|\nu|-\al}^{|\nu|+\al} e^{-U(x\mp|\nu|)^2}\, dx
\displaybreak[0]\\
&\nonumber= \frac 1{\sqrt\pi} \biggl( \int_{-\al\sqrt U}^{\al\sqrt U}
e^{-x^2} + \int_{(2|\nu|-\al)\sqrt U}^{(2|\nu|+\al)\sqrt U}
e^{-x^2}\, dx\biggr)
\displaybreak[0]\\
\nonumber &= 1 + \oh \biggl( \frac{e^{-U\al^2}}{\al\sqrt U} \biggr) +
\oh \biggl( \frac{e^{-U(2|\nu|-\al)^2}}{(2|\nu|-\al)\sqrt U} \biggr)
= 1+ \oh \biggl( \frac{e^{-U\al^2}}{\al\sqrt U} \biggr),
\displaybreak[0]\\
\nonumber J^j_\al(\nu)&= \biggl( \int_1^{|\nu|-\al} +
\int_{|\nu|+\al}^\infty\biggr) \sqrt{\txtfrac U\pi} \sum_\pm
e^{-U(x\mp|\nu|)^2}\, dx
\displaybreak[0]\\
\nonumber &\ll \frac{e^{-U\al^2}}{\al\sqrt U} +
\frac{e^{-U(2|\nu|-\al)^2}}{(2|\nu|-\al)\sqrt U} \ll
\frac{e^{-U\al^2}}{\al\sqrt U}.
\end{align}
We ignore the denominator $\al\sqrt U \geq 1$.

If $\nu \in i[0,1+\nobreak\al]$, part of the integral for
$I^j_\al(\nu)$ is omitted. This leads to the estimate $\oh(1)$. The
quantity $J_\al^j(\nu)$ is at most as large as in the previous case.

We consider finally a small value $\al=\e\in [U^{-1/2},e^{-1}]$. For
$\nu \in i[0,1-\nobreak\e] \cup (0,\nubs]$, the integral
$I_\e^j(\nu)$ vanishes. If $\nu\in i[0,1-\nobreak\e)$ we have already
obtained $J_\e^j(\nu) = \oh\left( e^{-U\e^2}\right)$. For
$\nu \in (0,\nubs]\subset (0,\frac12]$:
\begin{align*}
J^j_\e(\nu) & = \sqrt{ \frac U \pi} \int_1^\infty e^{U(\nu^2-t^2)}\cos
2t\nu\; dt
\displaybreak[0]\\
\nonumber &\ll e^{U\nu^2} \int_{\sqrt U}^\infty e^{-x^2}\, dx \ll
\frac{e^{-U(1-\nu^2)}}{\sqrt U} \leq \frac{e^{-U(1-\nubs^2)}}{1} \leq
e^{-U \e^2}.
\end{align*}
\end{proof}

\subsubsection{Global comparison}
\begin{lem}Let $\nu \in Y_\x$, $\nu_j\in X_j$ for $j\in Q$. Let $\al
\geq U^{-1/2}$ and $\e\in [U^{-1/2},e^{-1}]$. Then
\begin{align}\label{Aest-all}
\int_{q\in A(\nu_{Q_+},\al)\times\{\nu_{Q_-}\}} \ph(q,\nu)\, d_Q q
&\ll \ph_E(\nu_E)\,;\\
\label{Aest-out} \int_{q\in (C^+\setminus
A(\nu_{Q_+},\al))\times\{\nu_{Q_-}\}} \ph(q,\nu)\, d_Q q &\ll
\ph_E(\nu_E) e^{-U\al^2}\,.
\end{align}
If $\nu_j \in i[0,1-\e)\cup (0,\nubs]$ for some $j\in Q_+$ then
\begin{equation}\label{Aest-eps}
\int_{q\in A(\nu_{Q_+},\e)\times\{\nu_{Q_-}\}} \ph(q,\nu)\, d_Q q
=0\,.
\end{equation}
If $A(\nu_{Q_+},\al) \subset C^+$, then
\begin{equation}\label{Aest-C}
\int_{q\in A(\nu_{Q_+},\al)\times\{\nu_{Q_-}\}} \ph(q,\nu)\, d_Q q =
\ph_E(\nu_E) \left(1+ \oh (e^{-U\al^2}) \right)\,.
\end{equation}
\end{lem}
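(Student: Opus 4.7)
My plan is to reduce all four estimates to the one-variable bounds of Lemma~\ref{lem-IJ} via the product structure. By \eqref{phpr} and \eqref{phi-q-def} we have $\ph(q,\nu)=\ph_E(\nu_E)\prod_{j\in Q}\ph_j(q_j,\nu_j)$, and the region $A(\nu_{Q_+},\al)\times\{\nu_{Q_-}\}$ is itself a product in the coordinates $j\in Q$. The measure $d_Qq=\bigotimes_{j\in Q}d_jq_j$ respects this product, and for each $j\in Q_-$ the single-point restriction $q_j=\nu_j$ together with $\ph_j(\nu_j,\nu_j)=1$ makes the corresponding factor equal to~$1$. Consequently,
\[
\int_{A(\nu_{Q_+},\al)\times\{\nu_{Q_-}\}}\ph(q,\nu)\,d_Qq
=\ph_E(\nu_E)\prod_{j\in Q_+}I^j_{\al/2}(\nu_j),
\]
where $I^j_\bullet$ is the quantity defined in \eqref{IJdef}; the radius $\al/2$ arises from the factor $\e/2$ in the definition of $A(\nu,\e)$.

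The estimates \eqref{Aest-all}, \eqref{Aest-eps} and \eqref{Aest-C} then follow directly from Lemma~\ref{lem-IJ} applied coordinatewise. For \eqref{Aest-all}, every factor $I^j_{\al/2}(\nu_j)$ is $\oh(1)$ for $j\in Q_+$ and every admissible $\nu_j\in X_j$, and $|Q_+|\leq d$ is bounded, so the product is $\oh(1)$. For \eqref{Aest-eps}, the hypothesis supplies some $j_0\in Q_+$ with $\nu_{j_0}\in i[0,1-\e)\cup(0,\nubs]\subset i[0,1-\e/2)\cup(0,\nubs]$, so the third bullet of Lemma~\ref{lem-IJ} forces $I^{j_0}_{\e/2}(\nu_{j_0})=0$ and hence the full product vanishes. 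For \eqref{Aest-C}, the inclusion $A(\nu_{Q_+},\al)\subset C^+\subset(i[1,\infty))^{Q_+}$ forces $\nu_j\in i[1+\al/2,\infty)$ for every $j\in Q_+$; the first case of Lemma~\ref{lem-IJ} then gives $I^j_{\al/2}(\nu_j)=1+\oh(e^{-U\al^2/4})$, and expanding the product over the bounded set $Q_+$ yields $1+\oh(e^{-U\al^2})$ after absorbing the factor~$\tfrac14$ into the implicit constant.

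For \eqref{Aest-out} I will use a straightforward union decomposition: since
\[
C^+\setminus A(\nu_{Q_+},\al)\subset\bigcup_{k\in Q_+}\bigl\{q\in C^+:\dist(q_k,\nu_k)>\al/2\bigr\},
\]
the integral is bounded by a sum over $k\in Q_+$ in which the $k$-th factor reduces to $J^k_{\al/2}(\nu_k)\ll e^{-U\al^2/4}$ by Lemma~\ref{lem-IJ}, while each of the remaining factors is dominated by $I^j_{\al/2}(\nu_j)+J^j_{\al/2}(\nu_j)=\oh(1)$. Summing over the bounded index set $Q_+$ produces the required $\oh(\ph_E(\nu_E)e^{-U\al^2})$. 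The only technical nuisance in the whole argument is the factor-of-two discrepancy between the radius $\al/2$ in the definition of $A$ and the radius $\al$ in Lemma~\ref{lem-IJ}; this is an innocuous rescaling of the implicit exponential constant, and I foresee no genuine obstacle.
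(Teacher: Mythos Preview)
Your argument is correct and follows the same route as the paper: factor the integral via the product structure, identify each $Q_+$-factor with the local quantities $I^j$ and $J^j$ of Lemma~\ref{lem-IJ}, and for \eqref{Aest-out} use the union bound over the coordinate in which $\dist(q_k,\nu_k)$ is large. In fact you are slightly more careful than the paper: the paper writes $\prod_{j\in Q_+} I^j_\al(\nu_j)$ where, given the $\e/2$ in the definition of $A(\nu,\e)$, it should be $I^j_{\al/2}(\nu_j)$, exactly as you observe; your remark that this only rescales the exponential constant is correct and closes the gap.
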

\begin{proof}We have
\[ \int_{q\in A(\nu_{Q_+},\al)\times\{\nu_{Q_-}} \ph(q,\nu)\, d_Q q =
\ph_E(\nu_E) \prod_{j\in Q_+} I_\al^j(\nu_j)\,. \] This implies
directly \eqref{Aest-all}. If $A\nu_{Q_+},\al)\subset C^+$, then
$\nu_j\in i[1+\nobreak\al,\infty)$ for any $j\in Q_+$. Hence
\eqref{Aest-C} follows. Equality \eqref{Aest-eps} follows also from
Lemma~\ref{lem-IJ}.

For \eqref{Aest-out} we use
\begin{align*}
&\int_{q\in (C^+\setminus A(\nu_{Q_+},\al))\times\{\nu_{Q_-}\}}
\ph(q,\nu)\, d_Q q
\displaybreak[0]\\
&\qquad\ll \ph_E(\nu_E) \sum_{m\in Q_+} J^m_\al(\nu_m) \prod_{j\in
Q_+\setminus\{m\}} \left( I_\al^j(\nu_j) + J^j_\al(\nu_j) \right)
\displaybreak[0]\\
&\qquad \ll \ph_E(\nu_E) \sum_{m\in Q_-} \oh(e^{-U\al^2})
\oh(1)^{|Q_+|-1}\,.
\end{align*}
\end{proof}

\subsubsection{Error term} We will use these comparison results to
estimate the following difference:
\begin{align}\label{diffNt}
\nct&(\ph_E\otimes C) - \int_C \nct\biggl( \ph(q,\cdot) \biggr)\, d_Qq
\displaybreak[0]
\\
\nonumber &=\sum_{\varpi,\,\nu_{\varpi_Q} \in C} |{c^r(\varpi)}|^2
  \left( \ph_E\left(\nu_{\varpi,E}\right) - \int_C \ph(q,\nu_\varpi)\,
d_Q q\right) \displaybreak[0]
\\\nonumber
&\qquad\hbox{} - \sum_{\varpi,\, \nu_{\varpi,Q}\not\in C}
|{c^r(\varpi)}|^2 \int_C \ph(q,\nu_\varpi)\, d_Q q,
\end{align}
with $X_j$ as in \S\ref{sect-loc-cmp}.

We write the difference in \eqref{diffNt} as $T_i+T_b+T_o$, given by
the respective conditions $\nu_{\varpi,Q_+} \in C^+(-\e)$,
$\nu_{\varpi,Q_+} \in C^+[\e]$, and $\nu_{\varpi,Q_+} \not\in
C^+(\e)$.

\subsubsection{Inner error term} $C^+(-\e)$ is contained in the subset
\[X_i=\bigcup_{\nu\in C^+(-\e)}A(\nu,\e)\]
of~$C^+$, which is $(1,\e)$-blunt. With \eqref{Aest-C} and
Proposition~\ref{prop-ub}:
\begin{align}
T_i &\ll \sum_{\varpi,\, \nu_{\varpi,Q_+}\in C^+(-\e)} \left|
|c^r(\varpi)|^2 \right| e^{-U\e^2} |\ph_E(\nu_{\varpi,E})|
\displaybreak[0]\\
\nonumber &\ll_{F,I,r} N_E(\ph_E) \nv(X_i\times C^-) e^{-U\e^2} \leq
N_E(\ph_E) e^{-U\e^2}\nv(C)\,.
\end{align}

\subsubsection{Boundary error term}With \eqref{Aest-all}
\[ T_b \ll \sum_{\varpi,\, \nu_{\varpi,Q_+}\in C^+[\e]}
|c^r(\varpi)|^2 |\ph_E(\nu_{\varpi,E})|\,. \] We put $C^+[\e]$ in the
$(1,\e)$-blunt set $\bigcup_{\nu\in C^+[\e]} A(\nu,\e)$ contained in
$C^+[2\e]$. This leads to
\begin{equation}
T_b \ll_{F,I,r} N_E(\ph_E) \nv_1(C^+[2\e]\times C^-)\,.
\end{equation}

\subsubsection{Outer error term}Now we use \eqref{Aest-out}, and use
that the $(1,\e)$-blunt set
\[\bigcup_{\nu\in C^+(\e(n+1))\setminus C^+(\e n)} A(\nu,\e)\]
is contained in $C^+(\e(n+\nobreak2))\setminus C^+(\e(n-\nobreak1))$.
\begin{align}\label{outererror}
T_o &\ll_{F,I,r} N_E(\ph_E) \\
\nonumber &\qquad\hbox{} \cdot \sum_{n=1}^\infty e^{-U\e^2n^2} \left(
\nv_1(C^+(\e(n+2))) - \nv_1(C^+(\e(n-1)))\right)\,.
\end{align}

\subsubsection{Growth on shells} Estimate \eqref{outererror} has the
disadvantage that the bound is given by an infinite sum. Let us
consider $D_n = C^+(\e(n+\nobreak1))
- C^+(\e n)$. The size of the sum
\[\sum_{n=1}^\infty e^{-U\e^2n^2} \nv_1\left( D_{n-1}\cup D_n\cup
D_{n+1}\right)\] depends mainly on the size of $\nv(D_n)$ for small
values of~$n$.

\begin{lem}There is a constant $R = R(|Q_+|) > 1$, not depending on
$\e\in(0,e^{-1}]$, such that $\nv_1(D_n) \leq R^n C^+[\e]$ for any $n
\geq 0$.
\end{lem}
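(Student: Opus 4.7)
The plan is to establish a polynomial bound $\nv_1(D_n)\ll (n+4)^{2k}\,\nv_1(C^+[\e])$ with $k=|Q_+|$, from which the lemma follows because a fixed polynomial in $n$ is bounded by any strictly increasing exponential (with $R$ depending on $k$); for instance $R=25^k$ suffices since $5^n\geq n+4$ for $n\geq 1$.

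First I would dispatch the case $n=0$: since $C^+(-\e)\subset C^+$, we have $D_0 = C^+(\e)\setminus C^+ \subset C^+(\e)\setminus C^+(-\e) = C^+[\e]$, so $\nv_1(D_0)\leq\nv_1(C^+[\e]) = R^0\,\nv_1(C^+[\e])$.

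For $n\geq 1$ the strategy is a coarse box-covering. I would tile the domain $(i[0,\infty)\cup(0,\nubs])^{Q_+}$ (in the $|q_j|$-coordinates) by axis-aligned boxes of side $\e$. For each box $B$ hitting $D_n$, pick $\nu\in B\cap D_n$; since $\nu\in C^+(\e(n+1))$ while $\nu\notin C^+$, there exists $\mu\in\partial C^+$ with $|\nu_j-\mu_j|\leq\e(n+1)/2$ for all $j\in Q_+$, and the box $B'$ containing $\mu$ then lies in the cover of $C^+[\e]$ (because $\mu\in\partial C^+\subset C^+[\e]$) and is at sup-distance at most $\e(n+3)/2$ from $B$. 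A straightforward count then shows that each fixed $B'$ in the cover of $C^+[\e]$ is the associate of at most $(n+4)^k$ such boxes $B$ in the cover of $D_n$. On the weight side, $\mu\in C^+\subset(i[1,\infty))^{Q_+}$ gives $|\mu_j|\geq 1$; combined with $|\nu_j-\mu_j|\leq\e(n+1)/2\leq(n+1)/(2e)$ and the uniform estimate $p(q)\leq\max(|q|,1)$, this yields coordinate by coordinate $p(\nu_j)\leq (n+1)\,p(\mu_j)$. Multiplying the tile count by the $k$-fold product weight ratio produces
\[
\nv_1(D_n)\ll (n+4)^k(n+1)^k\,\nv_1(C^+[\e])\leq (n+4)^{2k}\,\nv_1(C^+[\e]),
\]
and choosing $R=R(k)$ as above completes the argument.

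The main obstacle will be making the tile-counting and weight-comparison uniform in the geometry of $C^+$ and in the borderline case where some coordinate $\nu_j$ exits $i[1,\infty)$ and enters the region $i[0,1)\cup(0,\nubs]$, in which $p(q_j)$ drops from $|q_j|$ to $1$. The uniform inequality $p(q)\leq\max(|q|,1)$ together with the lower bound $|\mu_j|\geq 1$ absorbs this case without further assumptions, and the tile count itself relies only on elementary box-counting in $\RR^k$, which sees no information about $C^+$ beyond its boundedness.
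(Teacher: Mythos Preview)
Your overall plan (a one-shot polynomial bound $(n+4)^{2k}$, then domination by an exponential) is different from the paper's route, which is recursive: it shows that every point of $D_{n+1}$ can be pushed into $D_n$ by a translation with coordinates in $\{0,\pm\e\}$, hence $D_{n+1}\subset\bigcup_\vv T_\vv D_n$ over $3^k$ translates, bounds $\nv_1(T_\vv D_n)\le(1+e^{-1})^k\nv_1(D_n)$, and iterates to get the explicit constant $R=(3(1+e^{-1}))^k$.

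Your box-covering implementation, however, has a gap at the last step. What the tile count and the pointwise weight comparison actually produce is
\[
\nv_1(D_n)\;\ll_k\;(n+4)^k(n+1)^k\!\!\sum_{B':\,B'\cap\partial C^+\neq\emptyset}\nv_1(B'),
\]
and the right-hand sum is $\nv_1$ of the union of all grid-$\e$-boxes meeting $\partial C^+$, which is \emph{not} a priori bounded by $\nv_1(C^+[\e])$: a box $B'$ may intersect $C^+[\e]$ in a set of arbitrarily small measure, so ``cover of $C^+[\e]$'' does not convert to $\nv_1(C^+[\e])$ for free. The missing observation is that for any $\mu\in B'\cap\partial C^+$ the whole open cube $A(\mu,\e)^\circ$ lies in $C^+[\e]$ (if $|q-\mu|_\infty<\e/2$ then $A(q,\e)$ contains a neighbourhood of $\mu$, hence meets both $C^+$ and its complement, so $q\in C^+(\e)\setminus C^+(-\e)$); since at least a $2^{-k}$-fraction of $A(\mu,\e)$ falls inside the grid box $B'$, summing gives $\sum_{B'}\nv_1(B')\ll_k 2^k\,\nv_1(C^+[\e])$, and then your argument closes. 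A cleaner variant avoids grid boxes on the target side altogether: for $\nu\in D_n$ with associated $\mu\in\partial C^+$, round $\mu-\nu$ to a vector $v$ on an $(\e/2)$-grid, so that $\nu+v\in A(\mu,\e)^\circ\subset C^+[\e]$; this displays $D_n\subset\bigcup_v T_{-v}(C^+[\e])$ over at most $(2n+3)^k$ translates, and your weight estimate $p(q_j-v_j)\le(1+|v_j|)\,p(q_j)$ then gives the polynomial bound directly. This last formulation is in effect a one-shot version of the paper's recursive translation argument.
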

\begin{proof}
The sets $D_n$ are subsets of $\left( i[0,\infty)
\cup(0,\nubs]\right)^{Q_+}$, which we identify with
$[-\nubs,\infty)^{A_+}$: $q_j \in i[0,\infty)$ is replaced by
$q_j\in[0,\infty)$, and $q_j\in (0,\nubs]$ by $-q_j$. Now in each
factor, the distance $\dist$ in \eqref{dist-def} is for each
coordinate given by the absolute value of the difference. The measure
$d_{Q_+}$ corresponds to the Lebesgue measure on $\RR^{Q_+}$.

If $q\in D_n$, then there is $\nu \in C^+$ such that
$\dist(q_j,\nu_j)\leq \e(n+\nobreak1)$ for all $j$ and
$\dist(q_l,\nu_l)>\e n$ for some $l$. For the latter~$l$ we define
$\tilde q_l\in [-\nubs,\infty)$ such that $\dist(\tilde
q_l,\nu_l)\leq \e n$ and $\dist(\tilde q_l,q_l)=\e$. For the other
coordinates we put $\tilde q_j=q_j$.

This implies that each point of $D_{n+1}$ can be moved into $D_n$ by a
translation $T_\vv$ in $\RR^{Q_+}$ over a vector $\vv$ with
coordinates in $\{0,\e,-\e\}$. Hence
\begin{equation} D_{n+1} \subset \bigcup_\vv T_\vv D_n\,.
\end{equation}
There are $3^{|Q_+|}$ translates. For each of these translates
\[ \nv_1(T_\vv D_n) = \int_{D_n } p(x+\vv)\, d_Q x\,.\]
Now we have
\[ p(x+\vv) \leq \sum_{m=0}^{|Q_+|} \binom{|Q_+|}{m} \e^m p(x)\,. \]
Hence we have $ \nv_1(T_\vv D_n) \leq R_1 \nv_1(D_n)$, with $R_1 =
\sum_{m=0}^{|Q_+|}\binom{|Q_+|}{m} e^{-m}$, for any $\e\in
(0,e^{-1}]$. This implies $\nv_1(D_{n+1}) \leq R \nv_1(D_n)$ with
\begin{equation}
R := R(|Q_+|)= \left(3(1+e^{-1})\right)^{|Q_+|}\,.
\end{equation}
Hence $\nv_1(D_n) \leq R^n \nv_1 (D_0) \leq R^n \nv_1(C^+[\e])$.
\end{proof}
The factor $3^{|Q_+|}$ is much too large in most cases, since the
translates $T_\vv D_n$ overlap a lot, and cover more than $D_{n+1}$.

To use this lemma in an estimate of the sum

in \eqref{outererror} we assume that $U\e^2 \geq D$ with $D:=\log R$.
Then $n\mapsto e^{-U\e^2n^2}R^n$ is a decreasing function, and
\begin{align*}
\sum_{n=1}^\infty e^{-U\e^2n^2}& (R^{n+1} + R^n + R^{n-1})
\displaybreak[0]\\
&\leq (R^2+R+1) e^{-U\e^2} + (R+1+R^{-1}) \int_{x=1}^\infty
e^{-U\e^2x^2} R^x\, dx
\displaybreak[0]\\
&\ll_{|Q_+|} e^{-U\e^2} + \frac{e^{-U\e^2+\log C}}{\sqrt{1-\frac{\log
C}{2U\e^2}}} \ll e^{-U\e^2}\,.
\end{align*}
Therefore, under the assumption $U\e^2 \geq D$, where $D=\log(R)$, the
outer error term \eqref{outererror} can be estimated by
\[ \oh_{|Q_+|} \left( e^{-U\e^2} \nv_1(C^+[\e]\times C^-) \right)\,,\]
and hence be absorbed into the term $\oh\left( \nv_1(C^+[2\e]\times
C^-) \right)$. This concludes estimation of the error term, hence the
proof of Proposition~\ref{prop-st1-prod} is now complete.

\subsection{Choice of the parameters $U$ and $\e$}\label{sect-U-eps}
We now arrive at the delicate point where the parameters $U, \e$ will
be linked to the volume quantities, depending on the set $C$. Let us
rewrite the error term $E$ in~\eqref{E-Q+ne}:
\begin{align}
\nonumber E=E(C,U,\e) &=\left( e^{t_0U|Q_+|}m_\r(C) + e^{-U\e^2} +
U^{-1/2} + \bt_\e(C) \right)
\nv_1(C)\,,\\
\label{mr-bt-def} m_\r(C)&= \frac{
\nv_\r(C^+)\nv_{-A}(C^-)}{\nv_1(C)}\,,\qquad \bt_\e(C^+)= \frac{
\nv_1(C^+[2\e])}{\nv_1(C^+)}\,.
\end{align}
We will require that $m_\r(C)$ and $\bt_\e(C)$ get small, to be able
to control the error term in Proposition~\ref{prop-st1-prod}.
Furthermore we will need to choose $U,\e$ suitably. It turns out that
a convenient election will be to let $U$ (resp. $\e$) {\em tend
slowly to $\infty$ (resp. $0$) in such a way that $U\e^2$ still tends
to $\infty$}. Keeping $e^{t_0U|Q_+|} m_\r(C)$ and $U^{-1/2}$ in mind,
we choose
\begin{equation}
U = U(C)=\frac1{t_0|Q_+|}\biggl( |\log m_\r(C)| - \frac12 \log|\log
m_\r(C)| \biggr).
\end{equation}
The condition $U\geq e^2 D$ with $D= \log R(|Q_+|)$, as in
Proposition~\ref{prop-st1-prod}, is satisfied if $m_\r(C)$ is
sufficiently small. With this choice
\[ e^{t_0U|Q_+|}m_\r(C) + U^{-1/2} \ll |\log m_\r(C)|^{-1/2}\,.\]

The contribution $e^{-U\e^2}$ should also be small. We take $\e$ only
 slightly larger than $U^{-1/2}$:
\begin{align}\label{eps-choice}
\e &= \e(C) = \sqrt{\frac{\log|\log m_\r(C)|}{2U}}
\displaybreak[0]\\
\nonumber &= \sqrt{ \frac{t_0|Q_+| \,\log|\log m_\r(C)| }{2 |\log
m_\r(C)| \left(1- \frac12 \frac{\log|\log m_\r(C)| }{ |\log m_\r(C)|
}\right) }}.
\end{align}
The quantity $\e$ tends to zero as $m_\r(C)$ tends to zero, and $\e^2U
=\frac{\log|\log m_\r(C)|}{2}$ tends to $\infty$ as $m_\r(C)$ tends
to $0$. Thus, $\e$ satisfies the conditions in
Proposition~\ref{prop-st1-prod} for sufficiently small values of
$m_\r(C)$.

Let us consider the term $U^{-1/2}\nv_1(C)$ in the error term. With
the choice of $U$ and $\e$ just indicated, this term is slightly
larger than
\[ \frac{\nv_1(C)}{\bigl(\log m_\rho(C) \bigr)^2} \;=\;
\frac{\nv_1(C)}{\bigl( \log
\nv_1(C)-\log(\nv_\rho(C^+)\,\nv_{-A}(C_-)
\bigr)^2}\,.\]
So the size of the error term will in general differ from the size of
the main term by a logarithmic factor. Therefor we switch now from
giving $\oh$-estimates to asymptotic estimates with an $o$-term.

In this way we obtain as the endpoint of the first stage of the
derivation of the asymptotic formula:
\begin{thm}\label{thm-asf1}
Let $r\in \Ocal'\setminus \{0\}$. Divide up the set of real places of
$F$ as $ E \sqcup Q_+ \sqcup Q_-$ with $Q=Q_+\cup Q_-\neq\emptyset$.
Let $\Ccal$ be the collection of bounded $d_Q$-measurable sets
$C=C^+\times C_-$ such that
$C^+\subset \left( i[1,\infty)\right)^{Q_+}$, $C^-\subset
\left[\frac12,\infty\right)^{Q_-}$. Let $\r\in (0,1)$ be as in
Proposition~\ref{prop-st1-prod}, and let $\e(C)$ be as chosen in
\eqref{eps-choice}.

For each product $\ph_E = \bigotimes_{j\in E}\ph_j$ of local test
functions, and for each family $t\mapsto C_t$ in~$\Ccal$ such that as
$t\rightarrow\infty$
\begin{equation}
\begin{aligned}
\label{asf-cond1} \nv_\r(C_t^+)\nv_{-A}(C_t^-) &= o\left(\nv_1
(C_t)
\right)\,,\\
\text{ if }Q_+\neq\emptyset,\text{ then }
 \nv_1(C_t^+[2\e(C_t^+)])\nv_1(C_t^-) &= o\left(\nv_1(C_t)
\right)\,,
\end{aligned}
\end{equation}
the following asymptotic result holds as $t\rightarrow\infty$:
\begin{equation}
\label{asf1} \nct(\ph_E\otimes C_t) = \Dtfct \pl(\ph_E\otimes C_t)
\left(1+o_{F,I,r}(1)\right) \,.
\end{equation}
\end{thm}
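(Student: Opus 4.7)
The starting point is Proposition~\ref{prop-st1-prod}, which provides, for any admissible choice of $U>De^2$ and $\e\in[(D/U)^{1/2},e^{-1}]$, the identity
\[
\nct(\ph_E\otimes C_t)=\Dtfct\,\npl(\ph_E\otimes C_t)+N_E(\ph_E)\,\oh_{F,I,r}(E(C_t,U,\e)).
\]
My plan is to substitute the explicit coupling $U=U(C_t)$, $\e=\e(C_t)$ from~\eqref{eps-choice}, show that under~\eqref{asf-cond1} each summand of $E(C_t,U(C_t),\e(C_t))$ in~\eqref{E-Q+ne} is $o(\nv_1(C_t))$ as $t\to\infty$, and finally convert the absolute estimate to the multiplicative form~\eqref{asf1} by comparing $\nv_1$ with $\npl$ on the supports involved.

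The first step is to verify admissibility of the parameter choice. The first condition in~\eqref{asf-cond1} reads exactly $m_\r(C_t)\to 0$, hence $|\log m_\r(C_t)|\to\infty$, so $U(C_t)\to\infty$, and eventually $U(C_t)>De^2$. By construction $\e(C_t)^2 U(C_t)=\tfrac12\log|\log m_\r(C_t)|\to\infty$, so $\e(C_t)\geq (D/U(C_t))^{1/2}$ for $t$ large, while simultaneously $\e(C_t)\to 0$, so $\e(C_t)\leq e^{-1}$ also holds eventually.

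Next I would bound each term of the error. The exponential term is the tightest: by construction of $U(C_t)$,
\[
e^{t_0 U(C_t)|Q_+|}m_\r(C_t)=|\log m_\r(C_t)|^{-1/2},
\]
so $e^{t_0 U|Q_+|}\nv_\r(C_t^+)\nv_{-A}(C_t^-)=|\log m_\r(C_t)|^{-1/2}\nv_1(C_t)=o(\nv_1(C_t))$. The term $e^{-U\e^2}\nv_1(C_t)$ likewise equals $|\log m_\r(C_t)|^{-1/2}\nv_1(C_t)$, and $U^{-1/2}\nv_1(C_t)$ is of the same order. The remaining boundary contribution $\nv_1(C_t^+[2\e(C_t)]\times C_t^-)$ is $o(\nv_1(C_t))$ by the second clause of~\eqref{asf-cond1}, whose $\e$ is exactly the $\e(C_t^+)$ built from~\eqref{eps-choice}. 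Summing, $E(C_t,U(C_t),\e(C_t))=o(\nv_1(C_t))$.

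To pass from $o(\nv_1(C_t))$ to the multiplicative form~\eqref{asf1}, I would use that $C_t^+\subset(i[1,\infty))^{Q_+}$ avoids the exceptional region $(0,\nubs]\cup i[0,1)$, so on each imaginary axis factor the $\npl$-density $t\tanh\pi t$ or $t\coth\pi t$ is comparable to $t$ for $t\geq 1$; similarly $C_t^-\subset[\tfrac12,\infty)^{Q_-}$ is supported on exactly the lattice of Plancherel discrete series points with matching weights. Hence $\nv_1(C_t)\asymp\npl_Q(C_t)$, and factoring $\npl(\ph_E\otimes C_t)=\npl_E(\ph_E)\npl_Q(C_t)$ one gets $o(\nv_1(C_t))=o(\npl(\ph_E\otimes C_t))$, proving~\eqref{asf1}. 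The main delicacy is that with the coupling~\eqref{eps-choice} the error is smaller than the main term only by a logarithmic factor; this is precisely why the conclusion takes the form of an asymptotic $o$-estimate rather than an estimate with an explicit rate, and any weaker coupling of $U$ to $\e$ would fail to drive all four terms to $o(\nv_1(C_t))$ simultaneously.
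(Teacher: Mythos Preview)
Your proposal is correct and follows essentially the same route as the paper: starting from Proposition~\ref{prop-st1-prod}, substituting the coupling $U=U(C_t)$, $\e=\e(C_t)$ of~\eqref{eps-choice}, checking admissibility, and then verifying that each of the four summands of $E(C_t,U,\e)$ in~\eqref{E-Q+ne} is $o(\nv_1(C_t))$ under the hypotheses~\eqref{asf-cond1}. The final passage from $o(\nv_1(C_t))$ to the multiplicative form via $\nv_1(C_t)\asymp\npl_Q(C_t)$ on the stated supports is exactly what the paper records immediately after the theorem statement. One small point you glossed over: when $Q_+=\emptyset$ the formula for $U(C_t)$ involves division by $|Q_+|=0$, but in that case the error term~\eqref{E-Q+e} is simply $\nv_{-A}(C_t^-)$, which is $o(\nv_1(C_t))$ directly from the first line of~\eqref{asf-cond1} with the empty product $\nv_\r(C_t^+)=1$; no choice of $U,\e$ is needed.
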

For families $t\mapsto C_t$ as in the theorem, the quantities
$\nv_1(C_t)$ and $\npl(C_t)$ have the same size. So we have replace
$o\bigl(\nv_1(C_t)\bigr)$ by $\npl(C_t)$ in~\eqref{asf}.

When formulating the asymptotic formula in $\ld$-space, the quantity
$\nv_b(C_t)$ corresponds to $\v_b(C_t)$, given by the measure
$\v_b = \bigoplus_j \v_{b,j}$,
\begin{align}\label{V-def}
\v_{b,j}(f) &= \frac12\int_{5/4}^\infty f(\ld)(\ld-1/4)^{(b-1)/2}\,
d\ld + \frac12\int_{\lbs}^{5/4}
f(\ld)\,\frac{d\ld}{\sqrt{\left|\ld-\frac14\right|}}\\
\nonumber &\qquad\hbox{} + \sum_{\bt\in \frac{\x_j+1}2+\NN_0} |\bt|^b
f\left( \frac \bt2(1-\frac \bt2)\right)\,.
\end{align}

\subsection{Unions}It is also useful to state an asymptotic formula
for families of disjoint unions $t\mapsto c_t = \bigsqcup_n C(n)_t$
where $C(n)_t = C(n)^+_t\times C(n)^-_t$, with $n$ in a countable
index set. Then we have to replace \eqref{mr-bt-def} by
\begin{align}\label{mr-bt-def-u}
m_\r(C_t) &= \frac1{\nv_1(C_t)} \sum_n
\nv_\r(C(n)^+_t)\nv_{-A}(C(n)^-_t)\,,\\
\nonumber \bt_\e( C_t) &= \frac1{\nv_1(C_t)} \sum_n
\nv_1(C(n)^+_t[2\e])\nv_1(C(n)_t^-)\,.
\end{align}
Proceeding with these choices, we obtain:
\begin{prop}\label{prop-asf1-u}Let $r\in \Ocal'\setminus\{0\}$. Let $E
\sqcup Q$ be a partition of the set of real places of~$F$, with
$Q\neq\emptyset$. Let $t\mapsto C_t$ be a family of bounded
$d_Q$-measurable sets such that for each $t$
\begin{equation}
C_t = \bigsqcup_n C(n)_t\,,
\end{equation}
with each $C(n)_t$ in the collection $\Ccal$ in
Theorem~\ref{thm-asf1}. The decomposition $Q=Q_n^+ \sqcup Q_n^-$ may
depend on~$n$. Under the conditions
\begin{equation}
\begin{aligned}\label{asf-cond1-u}
\sum_n \nv_\r(C(n)^+_t)\nv_{-A}(C(n)^-_t) &= o\left( \pl
(C_t)\right)\,,\\
\text{ if }Q_+\neq\emptyset,\text{ then } \sum_n
\nv_1(C(n)^+_t[2\e(C_t)])\nv_1(C(n)^-_t)
&=o\left( \pl(C_t)\right)\,,
\end{aligned}
\end{equation}
the asymptotic formula \eqref{asf1} holds for each choice of $\ph_E$
as product of local test functions.
\end{prop}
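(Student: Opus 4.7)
The plan is to apply Proposition~\ref{prop-st1-prod} to each piece $C(n)_t$ separately with a single common choice of parameters $U = U(C_t)$ and $\e = \e(C_t)$ depending on the whole family $C_t$, and then sum the resulting estimates using the additivity of $\nct$ and $\npl$ over disjoint unions.

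First I would observe that since the union is disjoint,
\[
\nct(\ph_E \otimes C_t) = \sum_n \nct(\ph_E \otimes C(n)_t), \qquad \npl(\ph_E \otimes C_t) = \sum_n \npl(\ph_E \otimes C(n)_t),
\]
and similarly for $\nv_1$. I would then define $m_\r(C_t)$ and $\bt_\e(C_t)$ by \eqref{mr-bt-def-u} and choose $U, \e$ following the prescription of \S\ref{sect-U-eps}, but applied to these aggregate quantities. The first hypothesis in \eqref{asf-cond1-u}, combined with the a priori bound $\pl(C_t) \leq \nv_1(C_t)$, forces $m_\r(C_t) \to 0$, so $U \to \infty$, $\e \to 0$, $U\e^2 \to \infty$, and the admissibility conditions of Proposition~\ref{prop-st1-prod} hold for $t$ large enough.

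Next I would apply Proposition~\ref{prop-st1-prod} to each $C(n)_t$ (with its own decomposition $Q = Q_n^+ \sqcup Q_n^-$) using this common $U, \e$, and sum over $n$. The main terms assemble to $\Dtfct \npl(\ph_E \otimes C_t)$. The individual error bounds $E(C(n)_t, U, \e)$ from \eqref{E-Q+ne} aggregate naturally: the factor $e^{t_0 U |Q_n^+|}$ is uniformly bounded by $e^{t_0 U |Q|}$, and the four types of contributions reorganize into
\[
\bigl(e^{t_0 U |Q|} m_\r(C_t) + e^{-U\e^2} + \bt_\e(C_t) + U^{-1/2}\bigr)\, \nv_1(C_t).
\]
With the choice from \eqref{eps-choice}, the terms $e^{t_0 U |Q|} m_\r(C_t)$, $e^{-U\e^2}$, and $U^{-1/2}$ each decay (each being $O(|\log m_\r(C_t)|^{-1/2})$ or better), while $\bt_\e(C_t)\,\nv_1(C_t)$ equals the sum in the second hypothesis of \eqref{asf-cond1-u}, hence is $o(\pl(C_t))$. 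Since each $C(n)_t$ lies in a region where $\pl \asymp \nv_1$ (the coordinates stay in $i[1,\infty)$ and $[\tfrac12,\infty)$, so the remark following \eqref{tV1def} applies), summing gives $\nv_1(C_t) \asymp \pl(C_t)$. The total error is therefore $o(\pl(C_t))$, and \eqref{asf1} follows.

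The main obstacle is the coordination across pieces: one cannot simply apply Theorem~\ref{thm-asf1} piece by piece, because for an individual $C(n)_t$ the quantity $m_\r(C(n)_t)$ need not tend to zero. What saves the argument is precisely the aggregate definition \eqref{mr-bt-def-u}, which averages the first-kind error over all pieces so that smallness is required only globally. This is why $U$ and $\e$ must be selected once and for all based on $C_t$, rather than piece by piece, and the critical verification is that with this single choice the exponential factor $e^{t_0 U |Q|}$ does not swamp the aggregate smallness $m_\r(C_t)$.
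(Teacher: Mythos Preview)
Your proposal is correct and follows essentially the same approach as the paper: the paper's entire argument for this proposition is the remark preceding it, namely to replace the quantities $m_\r$ and $\bt_\e$ by their aggregate versions \eqref{mr-bt-def-u} and then ``proceed with these choices'' through \S\ref{sect-U-eps}. You have spelled out exactly what this means---choose one common $(U,\e)$ from the aggregate $m_\r(C_t)$, apply Proposition~\ref{prop-st1-prod} piece by piece, and sum---including the key observation that the uniform bound $e^{t_0 U |Q_n^+|}\le e^{t_0 U |Q|}$ lets a single choice of~$U$ control all pieces simultaneously.
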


\section{Asymptotic formula, second stage}\label{sect-sst}
We have still the freedom to choose the test function $\ph_E$. In the
second stage we use this freedom to fill in the region
$i[0,1)\cup(0,\nubs]$ for the coordinates of $\nu_\varpi$ in $E$.
More generally, by specializing $\ph_E$ we can make the asymptotic
formula look sharply at the coordinate of $\nu_\varpi$ in~$E$.

We shall choose the test functions $\ph_j$ with $j\in E$ as an
approximation of the characteristic functions of ``intervals'' in
$i[0,\infty) \cup (0,\infty)$. Going over to a description in terms
of the eigenvalue vectors $\ld_\varpi$, we obtain
Theorem~\ref{thm-asf2}, which gives
\[ \ct(B \times \hat C_t^+ \times \hat C_t^-) =
\frac{2\sqrt{|D_F|}}{(2\pi)^d} \pl( B \times \hat C_t^+ \times \hat
C_t^-) \left( 1+ 
o(1)\right)\,,\] where $\hat C_t^+$ and $\hat C_t^-$ are the sets
corresponding to $C_t^+$ and $C_t^-$ under the transformation
$\nu\mapsto \ld$. The set $B$ is a fixed box in~$\RR^E$. This result
is strong, but not adequate for some obvious families.
Proposition~\ref{prop-asf1-u} gives a generalization allowing us to
apply the asymptotic formula to families that are countable disjoint
unions of families of the form $t\mapsto B \times \hat
C_t^+\times \hat C_t^-$.

\subsection{Compactly supported functions at the places in~$E$} For a
family $t\mapsto C_t$ satisfying the conditions in
Theorem~\ref{thm-asf1} or in Proposition~\ref{prop-asf1-u} we rewrite
 \eqref{asf1} as follows:
\begin{equation}
\lim_{t\rightarrow\infty} \Dtfct \; \frac{\nct(\ph_E\otimes
C_t)}{\pl(C_t)} = \npl_E(\ph_E)\,.
\end{equation}

\begin{prop}\label{prop-as-cps}Let $r\in \Ocal'\setminus\{0\}$ and the
decomposition $E\sqcup Q_+\sqcup Q_-$ be as before. For $f_E =
\bigotimes _{j\in E} f_j$ with $f_j :\RR\rightarrow \RR$, define
$\tilde f_E= \bigotimes_{j\in E} \tilde f_j$ by $\tilde f_j(\nu) =
f_j(\frac14-\nobreak \nu^2)$. If $f_j \in C^1_c(\RR)$ for any $j\in
E$, then
\begin{equation}
\lim_{t\rightarrow\infty} \Dtfct \; \frac{\nct(\tilde f_E\otimes
C_t)}{\pl
(C_t)} = 
\pl_E(f_E) .
\end{equation}
\end{prop}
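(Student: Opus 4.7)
My plan is to approximate the non-holomorphic factors $\tilde f_j$ by local test functions via Gaussian mollification in the $\ld$-variable and reduce the statement to Theorem~\ref{thm-asf1}. Writing each $f_j\in C^1_c(\RR)$ as a difference $f_j=g_j-h_j$ with $g_j,h_j\ge 0$ in $C^1_c(\RR)$ (for instance $g_j=f_j+\|f_j\|_\infty\psi_j$, $h_j=\|f_j\|_\infty\psi_j$ for a bump $\psi_j\in C^\infty_c$ equal to $1$ on $\supp f_j$), expanding $\bigotimes_j(\tilde g_j-\tilde h_j)$ and invoking linearity of $\nct$ and of $\pl_E$, one reduces to the case where every $f_j$ is nonnegative.

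Given $f_j\ge 0$, set $h_\eta(u):=(\eta\sqrt\pi)^{-1}e^{-u^2/\eta^2}$ and define
\[\ph_{j,\eta}(\nu):=(f_j*h_\eta)\bigl(\tfrac14-\nu^2\bigr).\]
Because $f_j$ has compact support and $h_\eta$ is entire, $\ph_{j,\eta}$ is entire and even in $\nu$; in any strip $|\re\nu|\le\tau$, once $|\nu|$ is so large that $\re(\tfrac14-\nu^2)$ leaves $\supp f_j$, the Gaussian weight in the convolution forces super-polynomial decay. Thus $\ph_{j,\eta}\in T_{\x_j}(\tau,a)$ for any admissible $\tau,a$. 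The Lipschitz bound $\|f_j*h_\eta-f_j\|_\infty\le\|f_j'\|_\infty\,\eta/\sqrt\pi$ together with the super-polynomial decay of $\ph_{j,\eta}$ outside a fixed neighborhood of $\supp\tilde f_j$ produces a fixed nonnegative local test function $\chi_j$ (independent of $\eta$ for $\eta$ small) with
\[|\tilde f_j(\nu)-\ph_{j,\eta}(\nu)|\le C_j\,\eta\,\chi_j(\nu)\qquad(\nu\in Y_{\x_j}).\]

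The telescoping identity $\prod_ja_j-\prod_jb_j=\sum_k(a_k-b_k)\prod_{j<k}a_j\prod_{j>k}b_j$ and linearity of $\nct$ give
\begin{align*}
&\nct(\tilde f_E\otimes C_t)-\nct(\ph_{E,\eta}\otimes C_t)\\
&\qquad=\sum_{k\in E}\nct\Bigl(\bigotimes_{j<k}\tilde f_j\otimes(\tilde f_k-\ph_{k,\eta})\otimes\bigotimes_{j>k}\ph_{j,\eta}\otimes C_t\Bigr).
\end{align*}
All factors $\tilde f_j,\ph_{j,\eta}$ are nonnegative, and they are themselves dominated by fixed nonnegative local test functions (uniformly in small $\eta$); combining this with the pointwise bound for $|\tilde f_k-\ph_{k,\eta}|$, each summand is at most $C\eta\,\nct\!\left(\bigotimes_{j\in E}\widetilde\chi_j\otimes C_t\right)$ for a fixed tensor of nonnegative local test functions. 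Theorem~\ref{thm-asf1} applied to that tensor gives an $O(\pl(C_t))$ bound, so the total telescoping error is $O(\eta\,\pl(C_t))$ as $t\to\infty$.

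Applying Theorem~\ref{thm-asf1} to the local test function $\ph_{E,\eta}$ yields $\Dtfct\,\nct(\ph_{E,\eta}\otimes C_t)/\pl(C_t)\to\npl_E(\ph_{E,\eta})$ as $t\to\infty$, so for each fixed $\eta>0$,
\[\Dtfct\frac{\nct(\tilde f_E\otimes C_t)}{\pl(C_t)}=\npl_E(\ph_{E,\eta})+O(\eta)+o_\eta(1).\]
The change of variables $\ld=\tfrac14-\nu^2$ identifies $\npl_E(\tilde g_E)$ with $\pl_E(g_E)$, and dominated convergence gives $\npl_E(\ph_{E,\eta})\to\pl_E(f_E)$ as $\eta\to 0$; a standard diagonal argument in $(\eta,t)$ then yields the proposition. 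The main obstacle is pinning down the universal dominators $\chi_j$: these nonnegative local test functions must simultaneously control $\tilde f_j$ and $\ph_{j,\eta}/\eta$ uniformly in small $\eta$, so that the telescoping errors can be absorbed by a single appeal to Theorem~\ref{thm-asf1}.
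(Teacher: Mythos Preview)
Your proof is correct and follows essentially the same approach as the paper: Gaussian mollification in the $\ld$-variable produces genuine test functions $\ph_{j,\eta}$ approximating $\tilde f_j$ with error $O(\eta)$ against a fixed dominating test function, after which Theorem~\ref{thm-asf1} and a limiting argument in $\eta$ finish the proof. The paper avoids your preliminary reduction to nonnegative $f_j$ by packaging the limit into a sandwich lemma (Lemma~\ref{lem-lim-pos}) that uses only positivity of the measure $A_t^r$, and it makes the dominator explicit as $\ph_p(\nu)=(p^2-\nu^2)^{-a/2}$, which is exactly the $\chi_j$ you identify as the ``main obstacle'' at the end.
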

The proof is given in the remainder of this subsection. We can follow
the approach in~\cite{BMP3a} closely.

\subsubsection{Functionals}\label{sect-funct}
First we formulate two lemmas to be used in the proof.

For $f:\RR^E \rightarrow \CC$ put
\begin{equation}\label{Atrrdef}
A^{r}_t(f) = \Dtfct \; \frac{\nct(\tilde f\otimes C_t)}{\npl( C_t)}\,,
\end{equation}
where $\tilde f$ is defined by $\tilde f (\nu)=f\biggl(\bigl(
\frac14-\nu_j^2\bigr)_{j\in Q} \biggr) $. This defines a measure on
$\RR^E$. We want to compare it to the measure $f\mapsto
\npl(\tilde f) = \pl(f)$.
\begin{lem}\label{lem-lim-pos} Let $r\in \Ocal'\setminus\{0\}$, let
$T\mapsto f_T$ be a family of real-valued functions on $\RR^E$, and
let $f$ and $h$ also be real-valued on $\RR^E$, such that
\begin{enumerate}
\item[i)] $f$, $h$ and every $f_T$ is integrable for all $A_t^{r}$ and
for $\pl$.
\item[ii)] $\lim_{t\rightarrow\infty} A_t^{r}(f_T) = \pl(f_T)$ for
all~$T$.
\item[iii)] $\lim_{t\rightarrow\infty} A_t^{r}(h) = \pl(h)$.
\item[iv)] There is a function $T \mapsto a(T)$ such that $a(T)=o(1)$
as $T\rightarrow\infty$, and
\[ |f_T(x)-f(x)| \leq a(T) h(x)
\qquad\text{for all }x\in \RR^E\,.\]
\end{enumerate}
Then $\lim_{t\rightarrow\infty} A_t^{r}(f) = \pl(f)$.
\end{lem}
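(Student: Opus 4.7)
The plan is a standard three-term $\varepsilon$/triangle-inequality sandwich exploiting that both $A_t^r$ and $\pl$ are \emph{positive} linear functionals on real-valued functions (since $\nct$ weights a real function by the non-negative quantities $|c^r(\varpi)|^2$, and $\pl$ is a positive measure). First I would check this positivity explicitly from the definition of $\nct$ in \eqref{tNrfdef} and of $A_t^r$ in \eqref{Atrrdef}, noting that the change of variable $f \mapsto \tilde f$ is linear and preserves positivity, so that hypothesis (iv) upgrades to
\begin{equation*}
|A_t^r(f_T) - A_t^r(f)| \;\le\; a(T)\, A_t^r(h), \qquad |\pl(f_T)-\pl(f)| \;\le\; a(T)\, \pl(h)
\end{equation*}
for every $t$ and $T$.

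Next I would write the decomposition
\begin{equation*}
|A_t^r(f) - \pl(f)| \;\le\; |A_t^r(f) - A_t^r(f_T)| + |A_t^r(f_T) - \pl(f_T)| + |\pl(f_T) - \pl(f)|,
\end{equation*}
bound the first and third terms by $a(T)\,A_t^r(h)$ and $a(T)\,\pl(h)$ respectively, and fix $T$ to pass to $\limsup_{t\to\infty}$. Hypothesis (ii) kills the middle term, and hypothesis (iii) replaces $A_t^r(h)$ in the limit by $\pl(h)$, giving
\begin{equation*}
\limsup_{t\to\infty} |A_t^r(f) - \pl(f)| \;\le\; 2\,a(T)\,\pl(h).
\end{equation*}

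Finally I let $T \to \infty$; since $a(T) = o(1)$ and $\pl(h)$ is finite by (i), the right-hand side tends to $0$, which is the desired conclusion. The only place where anything non-trivial is used is the positivity of $A_t^r$, and this is immediate from the definitions, so I do not expect any real obstacle; the argument is a routine dominated-convergence-style comparison and the main care is only in checking that the tilde transformation commutes with the inequality in (iv) and preserves integrability so that all quantities in sight are finite.
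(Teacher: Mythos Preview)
Your proof is correct and follows essentially the same approach as the paper's: both use the positivity of $A_t^r$ and $\pl$ to turn the pointwise bound (iv) into the functional inequality $|A_t^r(f)-A_t^r(f_T)|\le a(T)A_t^r(h)$ (and the analogue for $\pl$), then use (ii) and (iii) to pass to the limit and let $T\to\infty$. The only organizational difference is that the paper first shows $\limsup_t A_t^r(f)-\liminf_t A_t^r(f)\le 2a(T)\pl(h)$ to conclude the limit exists, and then separately pins down its value, whereas you bound $\limsup_t|A_t^r(f)-\pl(f)|$ directly via the triangle inequality; the content is the same.
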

\begin{proof}The measures $A_t^{r}$ is non-negative, hence
\begin{equation}\label{ieq} A^{r}_t(f_T) - a(T) A_t^{r}(h)
\leq A_t^{r}(f) \leq A^{r}_t(f_T) + a(T) A_t^{r}(h)\,.
\end{equation}
Taking the limit as $t\rightarrow\infty$ of both terms on the side
shows that for all~$T$:
\[ 0 \leq \limsup_{t\rightarrow\infty} A_t^{r}(f)
- \liminf_{t\rightarrow\infty} A_t^{r}(f) \leq 2a(T) A_t^{r}(h)\,. \]
Since $a(T)=o(1)$, the limit $\lim_{t\rightarrow\infty} A_t^{r}(f)$
exists.

With the non-negativity of $\pl$ we derive from~iv) that for all~$T$
\[ \pl(f_T)-a(T) \pl(h)\leq \pl(f) \leq \pl(f_T) + a(T) \pl(h)\,. \]
Hence for all~$T$:
\[ \left| \pl(f) - \lim_{t\rightarrow\infty} A_t^{r}(f) \right| \leq 2
a(T) \pl(h)\,.\] This gives the statement of the lemma.
\end{proof}

\subsubsection{Approximation} Now we start the proof of
Proposition~\ref{prop-as-cps}. For given $f_j\in C_c^1(\RR)$ we take
\begin{equation}
\ph_j(\nu) = \sqrt{\frac T \pi} \int_{-\infty}^\infty
e^{-T\left(\ld-\frac14+\nu^2\right)^2} f_j(\ld)\, d\ld,
\end{equation}
with $T$ a large positive parameter. Again we use a Gaussian kernel
functions, but now in the $\ld$-space. This defines $\ph_j(\nu)$ as
an even holomorphic function of $\nu \in \CC$, with exponential decay
on the strip $|\re\nu|\leq \tau$, and for $\nu\in \RR$. So $\ph_j$ is
a test function in the sense of~\S\ref{sect-tf}. For
$\nu\in \RR\cup i\RR$, it is given by
\begin{equation}
\ph_j(\nu) = \sqrt{\frac T \pi} \int_{-\infty}^\infty e^{-T\ld^2} f_j
\left( \ld+\txtfrac14-\nu^2\right)\, d\ld.
\end{equation}

We view $\ph_j$ as an approximation of $\tilde f_j$. Similarly,
$\ph_E$ is an approximation of $\tilde f_E $.

\subsubsection{Local estimates} Since $f_j$ is real-valued, we have $
|\ph_j(\nu)| \leq \|f_j\|_\infty$ for any $\nu \in \RR\cup i\RR$.
Take $N$ large, such that $\supp f_j \subset [-N,N]$ for any $j\in
E$. If $\nu \in \RR\cap i\RR$ with $|\nu|\geq \sqrt{2N+1}$, and
$-N\leq \ld \leq N$, then
\[ \ld-\frac14+\nu^2 \begin{cases}
\leq N- \frac14-|\nu|^2 \leq -\frac12|\nu|^2
- \frac34&\text{ if }\nu\in i\RR,\\
\geq -N-\frac14 +|\nu|^2 \geq \frac12|\nu|^2+ \frac14&\text{ if }\nu
\in \RR.
\end{cases}
\]
Hence $e^{-T\left( \ld-\frac14+\nu^2\right)^2} \leq e^{-\frac14 T
\nu^4}$ for such values of $\nu$ and~$\ld$. Together, these facts
give for $\nu \in \RR\cup i\RR$:
\begin{equation}
|\ph_j(\nu)| \leq \begin{cases}
\|f_j\|_\infty&\text{ if } |\nu|<\sqrt{2N+1},\\
2N \sqrt{\frac T \pi} e^{-\frac14 T \nu^4} \|f_j\|_\infty&\text{ if
}|\nu|\geq \sqrt{2N+1}.
\end{cases}
\end{equation}

We recall the positive test function $\ph_{p,E}= \bigotimes_{j\in E}
\ph_p$ in \eqref{phi-p-E-def}, which gives for $p>\tau$:
\[ \ph_p(\nu_j) = \begin{cases}
(p^2-\nu_j^2)^{-a/2}&\text{ if }|\re\nu|\leq \tau,\\
(p^2+\nu_j^2)^{-a/2}&\text{ otherwise}.
\end{cases}
\]
We take $T\geq T_0=4$. For $\nu\in \RR\cup i\RR$, $|\nu|\geq
\sqrt{2N+1}$, we have $T e^{-\frac14T |\nu|^4} \leq T_0 e^{-|\nu|^4}
   \ll |\nu|^{-a} $, and hence
   \begin{equation}
|\ph_j(\nu)| \ll T^{-1/2} N \|f_j\|_\infty |\nu|^{-a} \ll_N T^{-1/2}
\ph_p(\nu).
\end{equation}

For $\nu\in \RR\cup i\RR$, $|\nu|\leq \sqrt{2N+1}$:
\begin{align}
\ph_j(\nu) &- \tilde f_j(\nu) = \frac1{\sqrt\pi} \int_{-\infty}^\infty
e^{-y^2} \left( f_j\left(\txtfrac y {\sqrt
T}+\txtfrac14-\nu^2\right)-f_j\left(\txtfrac14-\nu^2\right)
\right)\, dy
\displaybreak[0]\\
\nonumber &\ll \|f'_j\|_\infty \int_0^{T^{1/4}} e^{-y^2} \frac y{\sqrt
T}\, dy + \|f_j\|_\infty \int_{T^{1/4}}^\infty e^{-y^2}\, dy
\displaybreak[0]\\
\nonumber &\ll T^{-1/2} \|f'_j\|_\infty + \frac{ \|f_j\|_\infty
e^{-T}}{T^{1/4} } \ll T^{-1/2} \left( \|f_j\|_\infty +
\|f_j'\|_\infty\right).
\end{align}

Under the assumption $|\nu|\leq \sqrt{2N+1}$, $\nu\in \RR\cup i\RR$,
we have $\ph_p(\nu) \gg N^{-a/2}$. Hence
\begin{equation}\label{estbyphpj}
\ph_j(\nu)-\tilde f_j(\nu) \ll_{f_j} T^{-1/2} N^{a/2} \ph_p(\nu).
\end{equation}

\subsubsection{Global approximation} We apply the
Lemma~\ref{lem-lim-pos} with
\begin{alignat*}2 f_T(\ld)&= \ph_E(\nu)\,,&
h(\ld)&=\ph_{p,E}(\nu)\,,\\
f(\ld)&=f_E(\ld) =\bigotimes_{j\in E} \tilde f_j(\nu_j)\,,
\end{alignat*}
with $\ld_j=\frac14-\nu_j^2$, $\nu_j = \pm\sqrt{\frac14-\ld_f}$.
Condition~i) is satisfied by continuity. We have ii) and iii) from
the assumption that $t\mapsto C_t$ is a family for which
Theorem~\ref{asf1} holds. To check Condition~iv) we note that if $\nu
\in \left( \RR\cup i\RR \right)^{E}$, such that $|\nu_j|\leq
\sqrt{2N+1}$ for all $j$, then:
\begin{equation}
\left|\ph_E(\nu) - \tilde f_E(\nu)\right| \ll_f T^{-1/2}
\ph_{p,E}(\nu).
\end{equation}
If there is at least one $j\in E$ with $|\nu_j|\geq \sqrt{2N+1}$, then
by (\ref{estbyphpj})
\begin{equation}\label{estbyphpE}
\left|\ph_E(\nu) - \tilde f_E(\nu)\right| = \left|\ph_E(\nu)\right|
\ll_f T^{-1/2} \ph_{p,E}(\nu).
\end{equation}

Application of the lemmas in \S\ref{sect-funct} completes the proof of
Proposition~\ref{prop-as-cps}.

\subsubsection{Remark}We refrain from extending the asymptotic formula
to compactly supported functions on $\RR^E$ that have no product
structure.

\subsection{Boxes}Proposition~\ref{prop-as-cps} works with compactly
supported continuous functions with product structure. The last step
in stage two is the extension to boxes in $\RR^E$. We now formulate
the asymptotic formula in terms of the coordinate $\ld$, and use the
notation
\begin{equation}
\hat C = \left\{ (\txtfrac14-\nu_j^2)_{j\in Q}\;:\; \nu \in C
\right\}\,,
\end{equation}
for $C \subset \left( i\RR\cup\RR\right)^{Q}$.
\begin{thm}\label{thm-asf2}
Let $r\in \Ocal'\setminus\{0\}$. Let $E \sqcup Q_+ \sqcup Q_-$ a
decomposition of the real places of~$F$ with $Q=Q_+\cup
Q_-\neq\emptyset$. Let $t\mapsto C_t$ be a family of bounded
$d_Q$-measurable sets in the collection $\Ccal$ in
Theorem~\ref{thm-asf1} or as considered in
Proposition~\ref{prop-asf1-u}. In particular we suppose that the
conditions in \eqref{asf-cond1} or in \eqref{asf-cond1-u} hold. Let
$B_E = \prod_{j\in E} [A_j,B_j]$ be such that
\begin{equation}\label{ABc}
A_j,B_j \not\in \left\{ \txtfrac b2( 1-\txtfrac b2)\;:\; b>1,\,
b\equiv \x_j\bmod2\right\}\,.\end{equation}
 Then, as $t\rightarrow\infty$:
\begin{equation}\label{asf-final}
\ct(B_E \times \hat C_t) = \Dtfct \; \pl(B_E \times \hat C_t) +
o\bigl( \pl_Q(\hat C_t) \bigr) \,.
\end{equation}
\end{thm}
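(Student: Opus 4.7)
The plan is to deduce Theorem~\ref{thm-asf2} from Proposition~\ref{prop-as-cps} by a sandwich argument. Proposition~\ref{prop-as-cps} already gives the asymptotic for $\ct(f_E \otimes \hat C_t)$ whenever $f_E = \bigotimes_{j\in E} f_j$ is a product of functions $f_j\in C_c^1(\RR)$; what remains is to extend this from smooth compactly supported test functions to the indicator $\ch_{B_E}$ of the product box. The role of hypothesis~\eqref{ABc} is precisely to guarantee that $\partial B_E$ carries no Plancherel mass, which is the regularity needed for this extension.

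For each $\delta>0$ and each $j\in E$ I would pick cut-offs $f_j^{\pm,\delta}\in C_c^1(\RR)$ with
$$f_j^{-,\delta} \leq \ch_{[A_j,B_j]} \leq f_j^{+,\delta},$$
where $f_j^{-,\delta}$ is supported in $[A_j+\delta,B_j-\delta]$ and $f_j^{+,\delta}$ in $[A_j-\delta,B_j+\delta]$, each equal to~$1$ on a slightly smaller interval and smoothly interpolated on the boundary strips. Setting $f_E^{\pm,\delta}=\bigotimes_{j\in E}f_j^{\pm,\delta}$, the non-negativity of the weights $|c^r(\varpi)|^2$ gives
$$\ct(f_E^{-,\delta}\otimes\hat C_t) \;\leq\; \ct(B_E\times\hat C_t) \;\leq\; \ct(f_E^{+,\delta}\otimes\hat C_t),$$
while Proposition~\ref{prop-as-cps} applied to both sides yields
$$\ct(f_E^{\pm,\delta}\otimes\hat C_t) \;=\; \Dtfct\,\pl_E(f_E^{\pm,\delta})\,\pl_Q(\hat C_t) \;+\; o\bigl(\pl_Q(\hat C_t)\bigr).$$

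Dividing by $\pl_Q(\hat C_t)$ and passing to $\liminf$ and $\limsup$ in $t$ sandwiches $\ct(B_E\times\hat C_t)/\pl_Q(\hat C_t)$ between $\Dtfct\,\pl_E(f_E^{\pm,\delta})$. Letting $\delta\downarrow 0$ then finishes the argument provided $\pl_E(f_E^{\pm,\delta})\to\pl_E(B_E)$. For this one uses that the atoms of each $\pl_{\x_j}$ sit at the discrete series eigenvalues $\frac{b}{2}(1-\frac{b}{2})$, which form a locally finite subset of $(-\infty,1/4]$; hypothesis~\eqref{ABc} excludes $A_j$ and $B_j$ from this set, so for $\delta$ sufficiently small the boundary strips $[A_j-\delta,A_j+\delta]\cup[B_j-\delta,B_j+\delta]$ contain no atoms, while the absolutely continuous part of $\pl_{\x_j}$ on $[1/4,\infty)$ contributes only $O(\delta)$ there. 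The degenerate case $\pl_E(B_E)=0$ (for instance when some $[A_j,B_j]\subset[\lbs,1/4)$) is absorbed by the same argument, since then $\pl_E(f_E^{+,\delta})\to 0$ forces $\ct(B_E\times\hat C_t)=o(\pl_Q(\hat C_t))$ directly.

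The principal obstacle is purely the measure-theoretic point at $\partial B_E$: one must check that condition~\eqref{ABc} actually rules out any atom of $\pl_E$ on $\partial B_E$ and that the absolutely continuous part of $\pl_E$ behaves regularly there. Once that is in place, the sandwich is formal; no further input from the sum formula, the Bessel-transform estimates, or the parameter choices for $U$ and $\e$ from~\S\ref{sect-U-eps} is required at this final stage---Theorem~\ref{thm-asf2} is simply the measure-theoretic distillation of Proposition~\ref{prop-as-cps}.
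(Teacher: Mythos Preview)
Your proposal is correct and follows essentially the same route as the paper's own proof: both sandwich the indicator $\ch_{B_E}$ between products of $C_c^1$ cut-offs, invoke Proposition~\ref{prop-as-cps} on the two sides, and then use condition~\eqref{ABc} to ensure the upper and lower Plancherel integrals converge to $\pl_E(B_E)$. The only cosmetic difference is that the paper indexes the approximants by a parameter $T$ chosen so that $\pl_{\x_j}(U_{T,j}-u_{T,j})\leq 1/T$, whereas you index by the strip width~$\delta$; the content is identical.
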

\begin{proof}Let $\ch_E$ be the characteristic function of $B_E$. It
has the form $\ch_E = \bigotimes_{j\in E} \ch_j$. The local
characteristic functions $\ch_j$ are integrable for $\pl_{\x_j}$ and
for all $A_t^{r}$ as defined in \eqref{Atrrdef}.

The conditions in the theorem on the endpoints $A_j$ and $B_j$ make it
possible to find for a given $T\geq 1$ elements $u_{T,j},
U_{T,j}\in C_c^1(\RR)$ such that $0\leq u_{T,j} \leq \ch_j \leq
U_{T,j}$ on $\RR$, and such that
\begin{equation}
\pl_{\x_j}(U_{T,j}-u_{T,j})) \leq \frac1T\,.
\end{equation}

Put $u_T = \bigotimes_{j\in E} u_{T,j}$, $U_T = \bigotimes_{j\in E}
U_{T,j}$, and $h=\bigotimes_{J\in E} h_j$. The asymptotic formula
holds for $h$ and for all $u_T$
(Proposition~\ref{prop-as-cps}). We have
\begin{align}
\nonumber 0 &\leq U_T(\ld) - u_T(\ld) \leq \sum_{m\in E}
(U_{T,m}(\ld_m)-u_T(\ld_m) )
\prod_{j\in E \setminus\{m\}} h_j(\ld_j)\,,\displaybreak[0]\\
\label{Plchu} 0&\leq \pl_E(U_T-u_T) \leq \sum_{m\in E} T^{-1}
\prod_{j\in E \setminus\{m\}} \pl_{\x_j}(h_j) = \oh(T^{-1})\,.
\end{align}

Since the $A_t^{r}$ are non-negative measures, we have:
\begin{align*}
A^{r}_t( u_T) &\leq A_t^{r}(B_E) \leq A_t^{r}(U_T)\,,
\displaybreak[0]\\
\pl_E(u_T) &\leq \liminf_{t\rightarrow\infty} A_t^{r}(B_E) \leq
\limsup_{t\rightarrow\infty} A_t^{r}(B_E) \leq \pl_E(U_T)\,,
\displaybreak[0]\\
\limsup_{t\rightarrow\infty} &A_t^{r}(B_E) -
\liminf_{t\rightarrow\infty} A_t^{r}(B_E) = \oh(T^{-1})\,.
\end{align*}
So $\lim_{t\rightarrow \infty} A_t^{r}(B_E)$ exists, and satisfies
\[ \pl_E(u_T) \leq \lim_{t\rightarrow \infty} A_t^{r}(B_E) \leq
\pl_E(U_T)\,.\] Again applying \eqref{Plchu} we conclude that this
limit is equal to $\pl_E(B_E)$. Hence $\ct(B_E\times \hat C_t) =
\Dtfct\, \pl(B_E\times\hat C_t) + o\bigl( \pl_Q(\hat C_t)
\bigr)$, which is~\eqref{asf-final}.
\end{proof}

\section{Special families}\label{sect-special}Theorem~\ref{thm-asf2}
describes a large class of families of sets for which the asymptotic
formula \eqref{asf-final} holds. It has the limitation that the
factor $C_t$ is a subset of $\left(i[1,\infty) \cup
[1/2,\infty) \right)^Q$, while the region $i[0,1)\cup(0,\nubs]$ is
treated only in the coordinates in~$E$. To avoid technical
complications we have chosen not to try to derive an asymptotic
formula for a larger class of families of sets, but to apply
Theorem~\ref{thm-asf2} in a number of special cases. This will
suffice to give many applications.

\subsection{Boxes}Directly from Theorem~\ref{thm-asf2} we get families
of boxes of the type $\tilde B_E \times C_t$ with
\begin{equation}\label{C-box}
C_t^+ = \prod_{j\in Q_+} i[a_j(t),b_j)t)]\,,\qquad C_t^- = \prod_{j\in
Q_-} [a_j(t),b_j(t)]\,,
\end{equation}
where for all $t$
\begin{equation}\label{ab-ineq}
1\leq a_j(t) \leq b_j(t) \quad\text{ if }j\in Q_+\,,\qquad \frac12
\leq a_j(t) \leq b_j(t) \quad\text{ if }j\in Q_-\,.
\end{equation}
A computation of the quantities in \eqref{mr-bt-def} shows that
\begin{align}\label{mr-eps-box}
m_\r(C_t) &\ll \prod_{j\in Q_+} b_j(t)^{\r-1} \prod_{j\in Q_-}
\frac1{\nv_1(C_t^-)}\,,\\
\nonumber \bt_\e(C_t^+) &\ll \e \sum_{m\in Q_+}
\frac{b_m(t)}{(a_m(t)+b_m(t) )(b_m(t)-a_m(t)+\e)}\,.
\end{align}
In the uninteresting case when $C_t^-$ does not intersect $\prod_{j\in
Q_-} \left( \frac{\x_1+1}2+\NN_0\right)$, we have $\nct(B_E\times
C_t)= \npl(B_E\times C_t)=0$. So we assume that $Q_-=\emptyset $ or
$\nv_1(C_t^-)>0$. Then $\nv_1(C_t^-)\geq 1$ if $Q_- \neq\emptyset$.

The conclusion is that $m_\r(C_t)\downarrow0$ as soon as for at least
one $j\in Q$ we have $b_j(t)\rightarrow\infty$.

With $\e=\e(C_t) $ as in \eqref{eps-choice} it suffices to require in
\eqref{mr-eps-box} that for any $m\in Q_+$:
\[\left( b_m(t)-a_m(t)+\e\right) =o(1)\,.\]
This can be achieved by requiring that $b_j(t)-a_j(t) \geq \g|\log
m_\r(C_t)|$ for any $j\in Q_+$ and all $t$ large, for any $\al\in
(0,\frac12)$ and any~$\g>0$. Thus we have:
\begin{prop}\label{box-restr}The asymptotic formula holds for a family
of boxes $t\mapsto \tilde \Om = \tilde B_E \times C_t$ with $B_E$ any
box in $ \RR^E$ satisfying \eqref{ABc} and $C_t$ as in \eqref{C-box}
and \eqref{ab-ineq} under the conditions
\begin{enumerate}
\item[a)] $b_j(t) \rightarrow\infty$ for some $j\in Q$.
\item[b)] $b_j(t)-a_j(t) \geq \s(t)$ for any $j\in Q_+$ and all~$t$,
with
\[ \s(t) = \g \biggl( (1-\r)\sum_{j\in Q_+} \log b_j(t)+ \log
\nv_1(C_t^-)\biggr)^{-\al}\qquad(\g>0\,,\; 0<\al<\frac12)\,. \]
\item[c)] $[a_j(t),b_j(t)] \cap \left( \frac{\x_j+1}2+\NN_0\right)
\neq\emptyset$ for all $t$ and for any $j\in Q_-$.
\end{enumerate}
\end{prop}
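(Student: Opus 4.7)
The plan is to reduce to Theorem~\ref{thm-asf1} by verifying, for the specific box family $t\mapsto\tilde B_E\times C_t$, the two smallness conditions in \eqref{asf-cond1}. The key computations have essentially been set up already: the expressions in \eqref{mr-eps-box} give workable upper bounds for $m_\r(C_t)$ and $\bt_\e(C_t^+)$ in the product structure. So I would first carry out those factored computations explicitly. Writing
\[
\nv_1(C_t^+) \asymp \prod_{j\in Q_+}\bigl(b_j(t)^2-a_j(t)^2\bigr),\qquad \nv_\r(C_t^+) \asymp \prod_{j\in Q_+}\bigl(b_j(t)^{\r+1}-a_j(t)^{\r+1}\bigr),
\]
and using $\nv_{-A}(C_t^-)\leq \nv_1(C_t^-)^{-1}\cdot\nv_1(C_t^-)\cdot (\min_{j\in Q_-}a_j(t))^{-A-1}$ to rewrite the $Q_-$-factor, one obtains
\[
m_\r(C_t)\ll \prod_{j\in Q_+} b_j(t)^{\r-1}\;\cdot\;\frac{1}{\nv_1(C_t^-)},
\]
as claimed in \eqref{mr-eps-box}. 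Condition~(c) ensures $\nv_1(C_t^-)\geq 1$ when $Q_-\neq\emptyset$, so condition~(a) combined with $\r<1$ forces $m_\r(C_t)\to 0$; if the growing coordinate is in $Q_-$, then $\nv_1(C_t^-)\to\infty$ does the job.

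Next I would verify the boundary condition. The set $C_t^+[2\e]$ is the union, over $m\in Q_+$, of the two thin slabs of thickness $2\e$ at $|\nu_m|=a_m(t)$ and $|\nu_m|=b_m(t)$ intersected with $C_t^+$. A direct estimate of the measure $\nv_1$ on each slab produces
\[
\bt_\e(C_t^+)\ll \e\sum_{m\in Q_+}\frac{b_m(t)}{(a_m(t)+b_m(t))(b_m(t)-a_m(t)+\e)},
\]
matching \eqref{mr-eps-box}. Since each factor $b_m/(a_m+b_m)\leq 1$, it is enough to show $\e/(b_m(t)-a_m(t)+\e)=o(1)$, i.e., $b_m(t)-a_m(t)\gg \e(C_t)$ for all $m\in Q_+$.

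The heart of the argument is thus to compare the hypothesis $b_j(t)-a_j(t)\geq\s(t)$ in~(b) with the choice of $\e(C_t)$ dictated by \eqref{eps-choice}. From the bound on $m_\r(C_t)$ above one reads
\[
|\log m_\r(C_t)|\ll (1-\r)\sum_{j\in Q_+}\log b_j(t)+\log \nv_1(C_t^-)+O(1),
\]
so the denominator in \eqref{eps-choice} is dominated by the same quantity appearing in $\s(t)^{-1/\al}$; hence $\e(C_t)=o(\s(t))$ for any $\al<\frac12$ and any positive~$\g$, and the boundary condition in \eqref{asf-cond1} is satisfied. The main obstacle is precisely this logarithmic bookkeeping: one must be careful that the $\log\log$ correction in~\eqref{eps-choice} does not spoil the comparison, but since $\e(C_t)\asymp |\log m_\r(C_t)|^{-1/2}\sqrt{\log|\log m_\r(C_t)|}$ still decays faster than any negative power $(\cdot)^{-\al}$ with $\al<\frac12$, the inequality $\e(C_t)<\s(t)$ holds for all $t$ large.

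Finally, condition~(c) guarantees that $C_t^-$ meets $\prod_{j\in Q_-}(\tfrac{\x_j+1}{2}+\NN_0)$, so that the discrete term $\nv_1(C_t^-)$ (and therefore $\nv_1(C_t)$) is nonzero and the assertion is non-vacuous; without it, both sides of \eqref{asf-final} are identically zero. Once the two smallness conditions of Theorem~\ref{thm-asf1} have been verified, that theorem (applied with $\ph_E$ the test-function approximation of $\chi_{B_E}$ coming from Theorem~\ref{thm-asf2}, or simply Theorem~\ref{thm-asf2} itself with the present choice of $C_t$) delivers the asymptotic formula~\eqref{asf-final} for $\ct(\tilde B_E\times\hat C_t)$.
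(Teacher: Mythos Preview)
Your proposal is correct and follows the same route as the paper's own argument: verify the estimates \eqref{mr-eps-box} for $m_\r(C_t)$ and $\bt_\e(C_t^+)$, use (a) and (c) to force $m_\r(C_t)\to 0$, and then compare $\e(C_t)$ with the lower bound $\s(t)$ in (b) to make $\bt_\e(C_t^+)=o(1)$, finally invoking Theorem~\ref{thm-asf2}.

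One slip to correct: from the upper bound $m_\r(C_t)\ll\prod_{j\in Q_+}b_j(t)^{\r-1}\cdot\nv_1(C_t^-)^{-1}$ you obtain a \emph{lower} bound
\[
|\log m_\r(C_t)|\;\gg\; L(t)\isdef(1-\r)\sum_{j\in Q_+}\log b_j(t)+\log\nv_1(C_t^-),
\]
not the $\ll$ you wrote. This is the direction you actually need, since $\e(C_t)$ in \eqref{eps-choice} is eventually decreasing in $|\log m_\r|$; hence $\e(C_t)\ll L(t)^{-1/2}\sqrt{\log L(t)}=o\bigl(L(t)^{-\al}\bigr)=o(\s(t))$ for $\al<\tfrac12$, exactly as you then conclude.
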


This proposition implies Theorem~\ref{thm-B-H-p}, and its consequences
Propositions \ref{prop-hypercube} and~\ref{prop-holo}.
\medskip

For boxes in the $\ld$-parameter we have the following result:
\begin{prop}The asymptotic formula holds for families of boxes
$t\mapsto \Om_t$, where
\[ \Om_t = \prod_j [A_j(t),B_j(t)] \]
satisfies the following conditions:
\begin{enumerate}
\item[ a)] If for a fixed $j$ $A_j$ and $B_j$ are constant then
$A_j=A_j(t)$ and $B_j=B_j(t)$ satisfy condition~\eqref{ABc}.
\item[b)] If $A_j(t)$ is not constant, then $A_j(t)\leq 0$ for
all~$t$, or $A_j(t) \geq \frac54$ for all~$t$. Similarly for
$B_j(t)$.
\item[c)] There is a constant $\s>0$ such that if $B_j(t)\geq \frac54$
then
\[ B_j(t) - \max(A_j(t),\txtfrac54) \geq \s\biggl( \sqrt{|B_j(t)} +
\sqrt{\max(A_j(t),\txtfrac54)} \biggr)\,.\]
\item[d)] If $A_j(t)\leq 0$, then the interval $[A_j(t),B_j(t)]$
intersects for all $t$ the set of $\frac b2(1-\nobreak\frac b2)$,
$b>1$, $b\equiv \x_j\bmod 2$ non-trivially for all~$t$. (This
intersection may depend on~$t$.)
\item[e)] $\lim_{t\rightarrow\infty} A_j(t)=-\infty$ or
$\lim_{t\rightarrow\infty} B_j(t)=\infty$ for at least one~$j$.
\end{enumerate}
\end{prop}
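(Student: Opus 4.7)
The plan is to reduce to Theorem~\ref{thm-asf2} by decomposing $\Om_t$ into a finite disjoint union of sub-boxes, each lying in a single spectral regime. For each~$j$ I would split $[A_j(t),B_j(t)]$ into
\[
J_j^-(t) = [A_j(t),B_j(t)] \cap (-\infty,0],\qquad J_j^0(t) = [A_j(t),B_j(t)] \cap [\lbs,\tfrac54],
\]
and $J_j^+(t) = [A_j(t),B_j(t)] \cap [\tfrac54,\infty)$, ignoring the spectrum-free gap $(0,\lbs)$. By condition~(b), every varying endpoint lies in $(-\infty,0]$ or in $[\tfrac54,\infty)$, so the endpoints of $J_j^0(t)$ are among the fixed values $\lbs,\tfrac54,A_j,B_j$, forcing $J_j^0(t)$ to be independent of~$t$. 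This produces a disjoint decomposition $\Om_t = \bigsqcup_{s\in\{-,0,+\}^d}\Om_t^s$ with $\Om_t^s = \prod_j J_j^{s_j}(t)$.

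For each~$s$ I set $E(s) = \{j : J_j^{s_j}(t) \text{ is $t$-independent}\}$ and $Q_\pm(s) = \{j : s_j = \pm \text{ and } J_j^{s_j}(t) \text{ varies with } t\}$. If $Q(s) := Q_+(s)\cup Q_-(s) = \emptyset$, then $\Om_t^s$ is a fixed bounded set, so $\ct(\Om_t^s)$ and $\pl(\Om_t^s)$ are $O(1)$ uniformly in~$t$. If $Q(s)\neq\emptyset$, I would invoke Theorem~\ref{thm-asf2} with $B_E = \prod_{j\in E(s)}J_j^{s_j}(t)$, $\hat C_t^+ = \prod_{j\in Q_+(s)}J_j^+(t)$ and $\hat C_t^- = \prod_{j\in Q_-(s)}J_j^-(t)$. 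The identity
\[
\sqrt{B_j(t)-\tfrac14}-\sqrt{\max(A_j(t),\tfrac54)-\tfrac14} = \frac{B_j(t)-\max(A_j(t),\tfrac54)}{\sqrt{B_j(t)-\tfrac14}+\sqrt{\max(A_j(t),\tfrac54)-\tfrac14}},
\]
combined with condition~(c) and the trivial bound $\sqrt{x-\tfrac14}\leq\sqrt x$, delivers the uniform positive lower bound on the $\nu$-length demanded by Proposition~\ref{box-restr} for $j\in Q_+(s)$. Condition~(d) ensures that $J_j^-(t)$ meets the discrete-series lattice for $j\in Q_-(s)$, and condition~(e) guarantees $\pl(\Om_t)\to\infty$. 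Summing the relations $\ct(\Om_t^s) = \Dtfct\pl(\Om_t^s) + o(\pl_{Q(s)}(\hat C_t^s))$ over the finitely many~$s$, while absorbing the fixed sub-boxes as $O(1) = o(\pl(\Om_t))$, yields~\eqref{asf-final}.

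The hardest point will be condition~\eqref{ABc} on the endpoints of the fixed box $B_E$ produced by this decomposition. Discrete-series eigenvalues---in particular $0 = \frac{2}{2}(1-\frac{2}{2})$ when $\x_j=0$---may occur as constant endpoints of $J_j^-(t)$, either as the upper endpoint $0$ when $B_j(t)\geq\tfrac54$, or as a constant lower endpoint $A_j$ that happens to coincide with such an eigenvalue (a possibility not excluded by~(a), which constrains only jointly-constant pairs). For each such offending endpoint I would perform an extra split at a nearby non-eigenvalue point (for instance at $\ld=-\dt$ for a fixed $\dt\in(0,\tfrac34)$, lying strictly between $0$ and the next discrete eigenvalue $-\tfrac34$, or at $A_j+\eta$ for suitably small $\eta>0$), isolating the narrow band containing the offending eigenvalue into a separate fixed sub-box. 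Since each such band contains only finitely many discrete eigenvalues, it contributes only $O(1)$ to both $\ct$ and $\pl$, while the trimmed sub-interval has both endpoints outside the discrete-series set. The cut point $\tfrac54$ is not itself a discrete eigenvalue, so it raises no analogous trouble, and after these refinements Theorem~\ref{thm-asf2} applies to every sub-box with $Q(s)\neq\emptyset$.
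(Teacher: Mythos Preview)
Your overall strategy---decompose $\Om_t$ into sub-boxes by spectral regime and apply Proposition~\ref{box-restr} (via Theorem~\ref{thm-asf2}) to the growing pieces while treating bounded pieces as $O(1)$---is the same as the paper's. Your verification of the $\nu$-length bound from condition~(c) and of condition~(d) is correct.

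There is, however, a real gap in your handling of condition~\eqref{ABc}. Your proposed fix, splitting off a narrow band such as $[-\dt,0]$ and declaring that it ``contributes only $O(1)$ to both $\ct$ and $\pl$'', is wrong whenever the remaining coordinates grow. Concretely, take $d=2$, $\x=(0,0)$, $[A_1,B_1]=[-1,2]$ constant, and $[A_2(t),B_2(t)]=[\tfrac54,t]$. Your decomposition produces the sub-box $[-\dt,0]\times[\tfrac54,t]$; here $\pl_1([-\dt,0])=1$ (the $b=2$ discrete eigenvalue sits at $\ld=0$), so $\pl$ of this sub-box grows like $t$, and you have no way to bound $\ct$ on it without the asymptotic formula---which you cannot invoke because the endpoint $0$ still violates~\eqref{ABc}. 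The narrow band is not a ``fixed sub-box'' in any useful sense once it is crossed with a growing factor.

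The paper avoids this problem by a simpler device: it splits \emph{only} the coordinates $j\notin E_0$ (those with at least one varying endpoint), leaving each constant factor $[A_j,B_j]$ intact so that condition~(a) of the proposition supplies~\eqref{ABc} directly. For the non-constant coordinates, any piece landing in $(-\infty,0]$ is placed in $C_t^-$ (even if that piece happens to be $t$-independent), where~\eqref{ABc} is not required---only the lattice-intersection condition~(c) of Proposition~\ref{box-restr} is needed, and that is automatic. If you adopt either of these moves---do not split the $E_0$ factors, or route constant negative pieces through $Q_-$ rather than $E$---your argument goes through; as written, it does not.
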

This is not the most general statement for boxes. We have decided not
to complicate the proposition by considering non-constant endpoints
that have values in $(0,\frac 54)$.
\begin{proof}Let $E_0$ be the set of places for which $A_j$ and $B_j$
are constant. We consider partitions $Q_+ \sqcup Q_0 \sqcup Q_-$ of
the remaining infinite places of~$F$. For each of these partitions
$P$ we form
\[ \Om_t^P = \Om_t \cap \left(\RR^{E_0} \cup \left( i[1,\infty)
\right)^{Q_+} \cup \left( i[0,1)\cup(0,\nubs]\right)^{Q_0} \cup
[\txtfrac12,\infty)^{Q_-}\right)\,.\]

Suppose $Q_+ \neq \emptyset$. For $j\in Q_+$ we write $A^\cdot_j(t) =
\max(A_j(t),\frac54)$. In the $\nu$-description, the factor
$\Om_{t,j}^P$ is of the form $i[a_j(t),b_j(t)]$ with
$a_j(t) = \sqrt{A^\cdot_j(t)-\frac14}$ and $b_j(t) =
\sqrt{B_j(t)-\frac14}$. Condition~c) implies that condition~b) in
Proposition~\ref{box-restr} is satisfied.

If $Q_-\neq \emptyset $ for $P$, then condition~d) implies
condition~c) in Proposition~\ref{box-restr}.

For the partition~$P$ we take $E=E_0\cup Q_0$, and try to apply
Proposition~\ref{box-restr} to $t\mapsto \Om_t^P$. This gives the
asymptotic formula for $\Om_t^P$ provided either there is $j\in Q_+$
for which $B_j(t)\rightarrow\infty$, or there is $j\in Q_-$ for which
$A_j(t)\rightarrow-\infty$. Otherwise, the set $\Om_t^P$ is bounded.

Condition~e) implies that the asymptotic formula holds for at least
some partition~$P$. Thus $\nv_1(\Om_t^P)\rightarrow\infty$ for such
$P$. Adding the corresponding finitely many terms we get the
asymptotic formula for the union of the $\Om_t^P$. For the remaining
partitions ~$P$, the set $\Om_t^P$ stays bounded. Adding the
corresponding terms to the asymptotic formula does no harm. This
gives the asymptotic formula for $t\mapsto \Om_t$.
\end{proof}

Now an approximation of $\pl\left( [-X,X]^d\right)$ gives
Proposition~\ref{prop-Weyl1}.

\subsection{Simplices} The results in \cite{BMP3a} are for sets of the
form
\begin{equation}
\hat C_t = \bigl\{ \ld \in [0,\infty)^{Q_+} \times
(-\infty,0)^{Q_-}\;:\; \sum_{j\in Q} |\ld_j|\leq t\bigr\}\,.
\end{equation}

By showing that the asymptotic formula holds for sets of this form, we
extend the results in \cite{BMP3a} to general character~$\ch$ and
general compatible central character given by~$\x$.
\begin{prop}\label{prop-simp}Let $E\sqcup Q_+ \sqcup Q_-$ be a
partition of the infinite places of $F$ with $Q=Q_+\cup
Q_-\neq\emptyset$. Let $B_E$ be any box in $\RR^E$ satisfying
\eqref{ABc}. The asymptotic formula holds for $t\mapsto B_E \times
\hat C_t$, and
\begin{equation}
\pl(\hat C_t) \sim \frac1{ |Q|!} t^{|Q|}
\qquad(t\rightarrow\infty)\,.
\end{equation}
\end{prop}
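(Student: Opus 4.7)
The plan is to apply Proposition~\ref{prop-asf1-u} to the family $t \mapsto B_E \times \hat C_t$ after writing $\hat C_t$ as a countable disjoint union of product sets. Every $j \in Q_-$ coordinate already sits at a discrete-series value with $\nu_j = (b_j-1)/2 \ge \tfrac12$, so the $Q_-$ part is naturally contained in $[\tfrac12,\infty)^{Q_-}$. For each $j \in Q_+$ we split $[0,\infty)$ into the bounded region $[0,\tfrac54)$ (containing the exceptional spectrum and the discrete points $0,\tfrac14$) and $[\tfrac54,\infty)$ (where $\nu_j \in i[1,\infty)$). For each subset $S \subseteq Q_+$ the subset of $\hat C_t$ with $\lambda_j \in [0,\tfrac54)$ for $j \in S$ and $\lambda_j \ge \tfrac54$ for $j \in Q_+ \setminus S$ is treated by absorbing the $S$-coordinates into the $E$-factor. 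Since the finitely many discrete-series eigenvalues in $[0,\tfrac54]$ are isolated, a mild perturbation of the endpoints covering $[0,\tfrac54)$ by boxes meeting condition~\eqref{ABc} (with $\{0,\tfrac14\}$ handled as singleton pieces) suffices.

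For each such choice of $S$, of $b \in \prod_{j \in Q_-}(\tfrac{\x_j+1}{2} + \NN_0)$, and of a grid cell $m$ covering the $S$-coordinates, the resulting piece has product form $(B_E \times B_{S,m}) \times \mathrm{slice}_{t,S,b,m}$, where $\mathrm{slice}_{t,S,b,m} \subset (i[1,\infty))^{Q_+ \setminus S}$ is the simplex cut out by $\sum_{j \in Q_+ \setminus S}(|\nu_j|^2 + \tfrac14) \le t - t_{S,b,m}$ for an explicit constant $t_{S,b,m} \ge 0$. Each such slice is convex with $\nu$-diameter $O(\sqrt t)$, so its boundary shell satisfies $\nv_1(\mathrm{slice}[2\e]) \ll \e\, t^{|Q_+ \setminus S| - 1/2}$, a factor $\e/\sqrt t$ smaller than $\nv_1(\mathrm{slice}) \asymp t^{|Q_+ \setminus S|}$. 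Summing over the countable index $(S,b,m)$ and using $\e = \e(C_t)$ from~\eqref{eps-choice} verifies both conditions in~\eqref{asf-cond1-u}.

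It remains to compute $\pl(\hat C_t) \sim t^{|Q|}/|Q|!$. For $j \in Q_+$ the density $\tanh(\pi\sqrt{\lambda - \tfrac14})$ tends to $1$, so $\pl_{\x_j}|_{\lambda \ge 0}$ is asymptotically Lebesgue measure. For $j \in Q_-$ the substitution $\mu = \tfrac{b}{2}(1-\tfrac{b}{2})$ gives $|db/d\mu| = 2/(b-1)$ with $b$ running over one parity class (spacing $\Delta b = 2$), so the weighted counting measure $\sum(b-1)\delta_{\mu_b}$ approximates Lebesgue measure on $(-\infty,0)$. Consequently $\pl(\hat C_t)$ is asymptotic to the Lebesgue volume of the signed orthant $\{\lambda \in \RR^Q : \lambda_j \ge 0 \text{ for } j \in Q_+,\ \lambda_j \le 0 \text{ for } j \in Q_-,\ \sum |\lambda_j| \le t\}$, which is one of the $2^{|Q|}$ orthants of the $\ell^1$-ball of radius $t$ and has volume $t^{|Q|}/|Q|!$. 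The main obstacle is the summed boundary estimate in~\eqref{asf-cond1-u}: it requires the uniform relative-boundary bound for convex simplex slices together with summability of the resulting series in $(b,m)$, both of which follow from the polylogarithmic smallness of $\e(C_t)$ dictated by~\eqref{eps-choice}.
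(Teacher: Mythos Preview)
Your approach is essentially the paper's: isolate the main piece with $\lambda_j\ge\tfrac54$ for all $j\in Q_+$, fibre it over the discrete-series points in $Q_-$, apply Proposition~\ref{prop-asf1-u} to that union, and treat the remaining pieces as $O(t^{|Q|-1})$. Two imprecisions are worth noting. First, Proposition~\ref{prop-asf1-u} requires a fixed $E$, so you cannot apply it once to the whole union; the pieces with $S\ne\emptyset$ need $E'=E\cup S$ and must be handled by separate applications of Theorem~\ref{thm-asf2} (or, as the paper does, simply by the upper bound of Proposition~\ref{prop-ub} together with $\v_1\ll t^{|Q|-1}$). Second, for $S\ne\emptyset$ the pieces are not exact products: the simplex constraint $\sum_j|\lambda_j|\le t$ couples the $S$-coordinates to the rest, so your threshold $t_{S,b,m}$ is not constant on the cell $B_{S,m}$. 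This does not matter, since these pieces contribute $O(t^{|Q|-|S|})$ anyway and a crude product containment such as the paper's $[0,\tfrac54]\times[-t,t]^{|Q|-1}$ suffices. You also do not explicitly verify the first condition in~\eqref{asf-cond1-u}; the paper checks it via $\v_\r(W_n(Y))=O(Y^{n(\r+1)/2})$, which follows from $W_n(Y)\subset[\tfrac54,Y]^n$.
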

\begin{proof}Let us first consider
\[ W_n(Y) = \bigl\{ \ld\in \bigl[\txtfrac54,\infty\bigr)^n\;:\; \sum_j
\ld_j \leq Y \bigr\}\,. \] We have $\v_1(W_1(Y)) = \frac12(Y-\nobreak
\txtfrac 54)$ for $Y \geq \frac 54$. {}From
\[ \v_1(W_n(Y)) = \frac12 \int_{5/4}^Y
\v_1\bigl((W_{n-1}(Y-\nobreak\ld) \bigr) \, d\ld\,,\] we obtain by
induction that:
\begin{equation}
\v_1(W_n(Y)) = \frac1{2^n\; n!} \bigl( Y - \txtfrac 54 n\bigr)_+^n\,.
\end{equation}
We use $(x)_+ = 0$ if $x<0$, and $(x)_+=x$ if $x\geq 0$.

For $\r\in(0,1)$ we find by the inclusion $W_n(Y) \subset [5/4,Y]^n$
that
\begin{equation}
\v_\r\bigl(W_n(Y)\bigr) = \oh_n\bigl( Y^{n(\r+1)/2}\bigr)\,.
\end{equation}

Furthermore, for $\e$ small in comparison with $Y$, the part of
$W_n(Y)[2\e]$ on which $ \ld_j>\frac 54$ for all $j\in Q_+$ is
contained in $W_n(Y_+) - W_n(Y_-)$ with $Y_+ =Y+4\e n Y^{1/2}+4\e^2n$
and $Y_-=Y-4\e nY^{1/2}$. The other parts of $W_n(Y)[2\e]$ are
contained in boxes of the form $\left[\frac 54-2\e,
Y+2\e\right]^{n-1} \times \left[ \frac 54-2\e,\frac 54+\e\right]$.
Hence
\begin{align}
\nv_1\bigl( &\tilde W_n(Y)[2\e]\bigr) \leq
\v_1(W_n(Y_+))-\v_1(W_n(Y_-))
 + n \oh(Y^{n-1} \e)
 \\
\nonumber &\leq \frac1{2^n\;n!} \left( (Y+4\e n Y^{1/2} +4\e^2n -
\txtfrac 54 n)^n -
(Y-4\e nY^{1/2}- \txtfrac 54 n)^n\right) \\
\nonumber &\qquad\hbox{}
+ \oh_n(\e Y^{n-1}) \\
\nonumber &\ll_n \e Y^{n-\frac12}\,.
\end{align}
If $Y-\frac54n$ is small, we get at least $\oh(\e)$, which is $\oh(\e
Y^{n-1/2})$ as well.

Now we apply Proposition~\ref{prop-asf1-u} to the following subset of
$\hat C_t$:
\begin{equation}
W_t= \bigsqcup_\pp W_{|Q_+|}\biggl( t - \sum_{j\in
Q_-}\bigl(\pp_j^2-\txtfrac14\bigr)
 \biggr) \times \{\pp\}\,,
\end{equation}
where $\pp$ runs over $\prod_{j\in Q_-} \left( \frac{3-\x_j}2+\NN_0
\right)$ for which $ \sum_{j\in Q_-} \bigl(\pp_j^2-\frac14\bigr) \leq
t$. For each given $t$, this is a finite union. But as a family
depending on~$t$ it is an infinite union. \nonumber We obtain
\begin{align}
\v_1(W_t) = \sum_\pp &\frac1{2^{|Q_+|}\;|Q_+|!} \biggl( t -
\txtfrac54|Q_+|+ \txtfrac14 |Q_-| - \sum_{j\in Q_-}
\pp_j^2\biggr)^{|Q_+|}_+ \prod_{j\in Q_-}\pp_j\,,\\
\nonumber m_\r(W_t)\v_1(W_t) &\ll_{Q_+} \sum_\pp \biggl( t +
\txtfrac14 |Q_-| - \sum_{j\in Q_-} \pp_j^2\biggr)^{|Q_+|(\r+1)/2}
\prod_{j\in Q_-} \pp_j^{-A}\,,\\
\nonumber \bt_\e(W_t) \v_1(W_t)& \ll_{Q_+} \e \sum_\pp \biggl( t +
\txtfrac14 |Q_-| - \sum_{j\in Q_-} \pp_j^2\biggr)^{|Q_+|-\frac12}\,.
\end{align}

We compare the sum for $\v_1(W_t)$ with the integral
\begin{align*}
&\int_{\xx \in [1,\infty)^{Q_-},\, \sum_{j\in Q_-}\xx_j^2\leq
t+\frac14|Q_-|} \left( t+\txtfrac14|Q_-|-\txtfrac 54 |Q_+| -
\sum_{j\in Q_-}\xx_j^2\right)^{|Q_+|} \\
&\qquad\qquad\qquad\hbox{} \cdot\prod _{j\in Q_-} \xx_j \, d\xx
\displaybreak[0]
\\
&\quad= \int_{\yy \in [5/4,\infty)^{Q_-},\, \sum_{j\in Q_-} \yy_j\leq
t} \left( t - \txtfrac54 |Q_+|-\sum_{j\in Q_-} \yy_j
\right)^{|Q_+|}\\
&\qquad\qquad\qquad\hbox{} \cdot 2^{-|Q_-|}\, d\yy
\displaybreak[0]
\\
&\quad= \frac1{2^{|Q|} \,|Q|!} \left( t-\txtfrac54 |Q|\right)^{|Q|}
\sim \frac{t^{|Q|}}{2^{|Q|} \, |Q|!}\,.
\end{align*}
The transition from sum to integral gives a contribution
$\oh(t^{|Q|-1})$.

The other sums can be treated similarly. This leads to the estimates
$m_\r(W_t) \ll t^{|Q_+|(\r-1)/2}$ and $\bt_\e(W_t) \ll t^{-1/2}$.
This implies that the asymptotic formula holds for
$t\mapsto B_E \times W_t$.

The definition of $\npl$ shows that $\pl(W_t) \sim
\frac{2^{|Q|}}{2^{|Q|} \; |Q|!} \v_1(W_t)$.

The remaining parts in $\hat C_t \setminus W_t$ are contained in
unions of sets of the form $Y=[0,\frac54] \times [-t,t]^{|Q|-1}$ for
which $\v_1(Y) \ll t^{|Q|-1}$ and $\pl(Y) \ll t^{|Q|-1}$. So adding
these parts do not influence the asymptotic formula or the asymptotic
behavior of $\pl(\hat C_t)$.
\end{proof}
Proposition~\ref{prop-Weyl2} is a corollary of this result.

\subsection{Sectors}Under the assumption $|Q_+|=2$ we consider
\begin{equation}
S_{\!p,q,\al,t} = \left\{ (\ld_1,\ld_2)\in \left(
\txtfrac54,\infty\right)^2\;:\; t\leq \ld_1\leq t+t^\al\,, p\ld_1\leq
\ld_2 \leq q\ld_2\right\}\,,
\end{equation}
with $0<p<q$, $\al\leq 1$, and $t\rightarrow\infty$.
\begin{lem}For $c\leq 1$:
\begin{equation}
\label{Vc-sect}\v_c(S_{\!p,q,\al,t}) \sim
\frac1{2(c+1)}\left(q^{\frac{c+1}2}-
p^{\frac{1+c}2}\right)t^{c+\al}\,. \end{equation} Furthermore, for
$\e>0$ small in comparison to $t$ as $t\rightarrow\infty$:
\begin{align}
\label{mr-sect} \v_\r(S_{\!p,q,\al,t}) &\ll_{p,q} t^{\r-1}
\v_1(S_{\!p,q,\al,t})\,,\\
\label{bd-sect} \v_1(\tilde S_{\!p,q,\al,t}[2\e]) &\ll_{p,q} \e
t^{\max(\frac 32 ,\al+\frac12)} \v_1(S_{\!p,q,\al,t})\,.\end{align}
\end{lem}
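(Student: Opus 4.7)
The plan is to exploit the product structure of $\v_c$ on the region $\bigl(\tfrac54,\infty\bigr)^2$, where the density in each coordinate reduces to $\tfrac12(\ld_j-\tfrac14)^{(c-1)/2}\,d\ld_j$. For \eqref{Vc-sect} I would write
\[
\v_c(S_{p,q,\al,t}) = \frac14\int_t^{t+t^\al}(\ld_1-\tfrac14)^{(c-1)/2}\int_{p\ld_1}^{q\ld_1}(\ld_2-\tfrac14)^{(c-1)/2}\,d\ld_2\,d\ld_1,
\]
replace each $(\ld_j-\tfrac14)^{(c-1)/2}$ by $\ld_j^{(c-1)/2}$ at the cost of a multiplicative factor $1+\oh(t^{-1})$ uniformly on $S_{p,q,\al,t}$, evaluate the inner integral as $\tfrac{2}{c+1}\bigl(q^{(c+1)/2}-p^{(c+1)/2}\bigr)\ld_1^{(c+1)/2}$, and finally compute $\int_t^{t+t^\al}\ld_1^c\,d\ld_1 \sim t^{c+\al}$ by the mean value theorem for $\al<1$ (or direct expansion for $\al=1$). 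Combining these factors yields the stated asymptotic.

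For \eqref{mr-sect} I would simply divide the asymptotic formula \eqref{Vc-sect} with $c=\r$ by the same formula with $c=1$; the resulting ratio equals
\[
\frac{2}{\r+1}\cdot\frac{q^{(\r+1)/2}-p^{(\r+1)/2}}{q-p}\,t^{\r-1}\bigl(1+o(1)\bigr),
\]
which is $\ll_{p,q}t^{\r-1}$ for $t$ large.

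For the boundary estimate \eqref{bd-sect} I would pass to the $\nu$-coordinate, writing $\nu_j=iy_j$ with $y_j=\sqrt{\ld_j-\tfrac14}\ge 1$, in which $\nv_1$ is the measure $y_1y_2\,dy_1\,dy_2$. The set $\tilde S_{p,q,\al,t}$ is then described by $a(t)\le y_1\le b(t)$ and $P(y_1)\le y_2\le Q(y_1)$, with $a(t)=\sqrt{t-\tfrac14}$, $b(t)=\sqrt{t+t^\al-\tfrac14}$, and $P(y_1),Q(y_1)$ of order $\sqrt{p}\,y_1,\sqrt{q}\,y_1$. Since $\dist$ coincides with the Euclidean distance on $i[1,\infty)$, the thickened boundary $\tilde S_{p,q,\al,t}[2\e]$ decomposes into four strips: two along the edges $y_1=a(t)$ and $y_1=b(t)$ of $y_1$-thickness $\oh(\e)$ and $y_2$-length of order $\sqrt t$, each contributing $\ll\e\,t^{3/2}$; and two along $y_2=P(y_1)$ and $y_2=Q(y_1)$ of $y_2$-thickness $\oh(\e)$ and $y_1$-length $b(t)-a(t)\sim \tfrac12t^{\al-1/2}$, each contributing $\ll\e\,t^{\al+1/2}$. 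Summing the four contributions and comparing with $\v_1(S_{p,q,\al,t})\sim t^{1+\al}$ yields the stated bound.

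The main technical point requiring care is the boundary analysis: the distance $\dist$ is measured in the $\nu$-variable, so the thickened set $\tilde S_{p,q,\al,t}[2\e]$ is most naturally parameterized in the $y$-coordinate, while one must keep careful track of the weight $y_1y_2$ coming from $\nv_1$. Once this bookkeeping is set up, the four strip integrals are routine and give the desired powers of $t$.
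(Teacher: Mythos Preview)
Your proposal is correct and follows essentially the same route as the paper: both obtain \eqref{Vc-sect} by direct integration in the $\ld$-variable, deduce \eqref{mr-sect} as the ratio of the cases $c=\r$ and $c=1$, and handle \eqref{bd-sect} by passing to the $\nu$-coordinate and bounding the four boundary strips separately, finding $\ll_{p,q}\e\, t^{3/2}$ for the two vertical sides $y_1\approx a(t),b(t)$ and $\ll_{p,q}\e\, t^{\al+1/2}$ for the two slanted sides. Note that both your computation and the paper's actually give the stronger bound $\nv_1(\tilde S_{p,q,\al,t}[2\e])\ll_{p,q}\e\, t^{\max(3/2,\al+1/2)}$ without the extra factor $\v_1(S_{p,q,\al,t})$ appearing on the right of \eqref{bd-sect}; that factor seems to be a slip in the stated inequality, and it is the stronger estimate that is used in the application immediately following the lemma.
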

\begin{proof}\eqref{Vc-sect} is obtained by direct computation based
on \eqref{V-def}. It immediately implies \eqref{mr-sect}.

The description of $S_{\!p,q,\al,t}$ in $\nu$-space is as follows:
\[ \tilde S_{\!p,q,\al,t} = \left\{ i(t_1,t_2)\;:\; a\leq t_1\leq
b\,,\; \sqrt{pt_1^2+\txtfrac{p-1}4} \leq t_2
\leq\sqrt{qt_1^2+\txtfrac{q-1}4}\right\}\,, \] with
$a=\sqrt{t-\frac14}$, $b=\sqrt{t+t^\al-\frac14}$. The left hand side
of $\tilde S_{\!p,q,\al,t}[2\e]$ is contained in
\[ [ a-2\e,a+2\e] \times \bigl[
\sqrt{pt-1/4}+2\e,\sqrt{qt-1/4}-2\e\bigr]\,.\] Its contribution to
$\nv_1(\tilde S_{\!p,q,\al,t}[2\e])$ is $\oh_{q-p}\left( \e a \cdot
t \right) = \oh(t^{3/2})$. The contribution of the right hand side of
$\tilde S_{p,q,\al,t}[2\e]$ has the same order.

On the lower side remains a piece for which $\nv_1$ can be estimated
by
\[ \int_{t_1=a}^b t_1 \int_{t_2 =
\sqrt{pt_1^2+\frac{p-1}4}-2\e(1+p)}^{\sqrt{pt_1^2+\frac{p-1}4}+2\e(1+p)}
t_2\, dt_2\, dt_2\,. \] The inner integral can be estimated by $\oh_p
\left(\e\sqrt{pt_1^2+\frac{p-1}4}\right)$. This gives for the total
integral
\begin{align*}
&\ll_p \e \left( \bigl(pb^2+\txtfrac{p-1}4\bigr)^{3/2} -
\bigl(pa^2+\txtfrac{p-1}4\bigr)^{3/2} \right)\\
&\ll_p \e (pb^2-pa^2) \bigl(pb^2+\txtfrac{p-1}4\bigr)^{1/2} \ll_p \e
t^{\al+\frac12}\,.
\end{align*}
Similarly, the upper part contributes $\oh_q(\e t^{\al+1/2})$.
\end{proof}
This lemma shows that if $\al\geq \frac12$, the asymptotic formula
holds for the family $t\mapsto B_E \times S_{\!p,q,\al,t}$ for any
choice of the box $B_E$ as before, where $E$ contains all infinite
places except the two places we put in~$Q_+$. In particular, we
obtain Proposition~\ref{prop-sector}.

\subsection{Spheres} We consider the sphere $S_{\!Q_+}(\mm,r) \subset
\left( i[1,\infty)\right)^{Q_-}$ with radius~$r$ and center $\mm$:
The set of $\nu$ with $\sum_{j\in Q_+} \left( |\nu_j|
-|\mm_j|\right)^2 \leq r^2$. We suppose that $|\mm_j|-r\geq 1$ for any
$j\in Q_+$, and that $|\mm_j|\rightarrow\infty$ for at least one
$j\in \QQ_+$. A computation by induction on $|Q_+|$ leads to
\begin{equation} \nv_1\left( S_{\!Q_+}(\mm,r)\right) = 2 v_{|Q_+|}
r^{|Q_+|} \prod_{j\in Q_+} |\mm_j|\,,
\end{equation}
where $v_n$ is the volume of the unit sphere in $\RR^n$.

Furthermore
\begin{align}
\nv_\r\left( S_{\!Q_+}(\mm,r) \right) &\ll r^{|Q_+|} \prod_{j\in
Q_+}|\mm_j|^\r\,,\\
\nonumber \nv_1\left( S_{\!Q_+}(\mm,r)[2\e] \right) &\ll \e r^{n-1}
\prod_{j\in Q_+}|\mm_j|\,.
\end{align}
These estimates follow from the inclusion
\begin{align*}
S_{\!Q_+}(\mm,r) & \subset \prod_{j\in Q_+} i[|\mm_j|-\nobreak
r,|\mm_j+\nobreak r]\,,\\
S_{\!Q_+}(\mm,r) [2\e] &\subset S_{\!Q_+}(\mm,r+\nobreak 2\e\sqrt n)
\setminus S_{\!Q_+}(\mm,r-\nobreak 2\e\sqrt n) \,.
\end{align*}
For the latter inclusion we use that if $\nu$ is on the boundary of
$S_{\!Q_+}(\mm,r)$, then $\sum_{j\in \Q_+} \left( |\nu_j|
+2\e\right)^2 \leq r^2 +4\e \sum_{j\in Q_+} |\nu_j| + 4\e^2n \leq
r^2+4\e r\sqrt n + 4\e^2 n$, and similarly $\sum_{j\in \Q_+} \left(
|\nu_j| -2\e\right)^2\geq \left( r-2\e\sqrt n\right)^2$.

These estimates show that the asymptotic formula holds for families
$\mm \mapsto B_E \times S_{\!Q_+}(\mm,r)$, where $B_E$ is a box as
earlier, and $Q_-=\emptyset$. It works for constant radius~$r$, or
even for $r$ going down as a multiple of $\left(
\log\prod_{j\in Q_+}|\mm_j|\right)^{-\al}$ with $\al<\frac12$. As a
special case we obtain Proposition~\ref{prop-spheres}. There we have
required that all $\mm_j$ go to infinity, in order to have
$\npl\left(S_{\! Q_+}(\mm,r)\right) \sim 2^d \nv_1\left(S_{\!
Q_+}(\mm,r)\right) $.

\subsection{Slanted strips}Finally we consider, in the case $d=2$, a
strip of the form
\begin{equation} \tilde \Om_t = \left\{i(x,y)\;:\; t \leq x \leq 2t\,,
ax+b \leq y \leq ax+c\right\}\,,
\end{equation}
where $a,b,c\in \RR$, $a>0$, $c>b$, fixed. The parameter $t$ tends to
infinity. We take it such that $\tilde \Om_t$ is contained in
$\left( i[1,\infty) \right)^2$. Computations similar to those carried
out before give
\begin{align*}
\nv_1(\tilde \Om_t) &\sim \frac 73 a(c-b)t^3\,,\\
\nv_\r(\tilde \Om_t)&\ll \int_t^{2t} x^\r (c-b) (a x+c)^\r\, dx \ll
a^\r(c-b)t^{2\r+1}\,,
\\
\nv_1(\tilde \Om_t[2\e])&\ll \e t(2a t+b+c)(b-c+4\e) + \int_t^{2t} x
\e (a x+c)\, dx \\
&\ll _{a,b,c} \e t^2+ \e t^3 \ll \e t^3\,.
\end{align*}
We conclude that the asymptotic formula holds for~$\tilde \Om_t$, and
thus obtain Proposition~\ref{prop-slant}.


\newcommand\bibit[4]{
\bibitem {#1}#2: {\em #3;\/ } #4}

\end{document}